\documentclass[reqno]{amsart}

\usepackage{graphicx,subfigure}

\numberwithin{equation}{section}

\usepackage[latin1]{inputenc}
\usepackage[english]{babel}

\usepackage{amsmath,amsthm,amsfonts,latexsym,amssymb}
\usepackage[colorlinks]{hyperref}
\hypersetup{linkcolor=blue,citecolor=blue,filecolor=black,urlcolor=blue}
\usepackage{comment}

\usepackage{color,amsthm,amsfonts}
\definecolor{darkgreen}{rgb}{0,0.7,0.1}

{ \theoremstyle{plain}
\newtheorem{theorem}{Theorem}[section]
\newtheorem{proposition}[theorem]{Proposition}
\newtheorem{lemma}[theorem]{Lemma}

  \theoremstyle{remark}
\newtheorem{remark}[theorem]{Remark}
  \theoremstyle{definition}

}

\begin{document}
\subjclass[2010]{35J92, 35J20, 35B09, 35B45.}

\keywords{Quasilinear elliptic equations, Sobolev-supercritical nonlinearities, Neumann boundary conditions, Radial solutions.}

\title[]{Multiplicity of solutions on a Nehari set in an invariant cone}
\author[F. Colasuonno]{Francesca Colasuonno}
\address{Francesca Colasuonno\newline\indent
Dipartimento di Matematica
\newline\indent
Alma Mater Studiorum Universit\`a di Bologna
\newline\indent
piazza di Porta San Donato 5, 40126 Bologna, Italy}
\email{francesca.colasuonno@unibo.it}

\author[B. Noris]{Benedetta Noris}

\author[G. Verzini]{Gianmaria Verzini}

\address{Benedetta Noris and Gianmaria Verzini\smallskip\newline\indent
Dipartimento di Matematica\newline\indent
Politecnico di Milano\newline\indent
Piazza Leonardo da Vinci 32, 20133 Milano, Italy}
\email{benedetta.noris@polimi.it}
\email{gianmaria.verzini@polimi.it}

\begin{abstract}
For $1<p<2$ and $q$ large, we prove the existence of two positive, nonconstant, radial and radially nondreacreasing solutions of the supercritical equation
\[-\Delta_p u+u^{p-1}=u^{q-1}\]
under Neumann boundary conditions, in the unit ball of $\mathbb R^N$.
We use a variational approach in an invariant cone. We distinguish the two solutions upon their energy: one is a ground state
inside a Nehari-type subset of the cone, the other is obtained via a mountain pass argument inside the Nehari set.

As a byproduct of our proofs, we detect the limit profile of the low energy solution as $q\to\infty$ and show that the constant solution 1 is a local minimum on the Nehari set.
\end{abstract}

\maketitle

\section{Introduction}
For $1<p<2$, we consider the following Neumann problem
\begin{equation}\label{eq:Pq}
\begin{cases}
-\Delta_p u+u^{p-1}=u^{q-1}\quad&\mbox{in }B,\\
u>0\quad&\mbox{in }B,\\
\partial_\nu u=0\quad&\mbox{on }\partial B,
\end{cases}
\end{equation}
where $B$ denotes the unit ball of $\mathbb R^N$ ($N\ge1$), $\nu$ is the outer unit normal of $\partial B$ and $q>p$. In particular, the nonlinearity on the right-hand side is allowed to be supercritical in the sense of Sobolev embeddings and, for $q$ sufficiently large, we prove that the problem admits two distinct nonconstant radial, radially nondecreasing solutions.

Although we address the problem governed by the --possibly singular-- $p$-Laplacian operator, with $p\in(1,2)$, the interest in this class of problems originally arose for the case $p=2$, as a stationary version of the Keller-Segel system for chemotaxis. For the semilinear problem, the existence and non-existence of nonconstant solutions has been widely studied since the eighties. In \cite{LNT}, in the subcritical regime, Lin, Ni and Takagi proved that if the radius of the ball is sufficiently small, the semilinear problem admits only the constant solution, while, if the radius is sufficiently large, it admits a nonconstant solution. Similar existence and non-existence results have been proved also in the supercritical regime in \cite{LN}. Conversely, the validity of such results in the critical case depends on the dimension $N$, cf. \cite{AY91,AY97,BKP}. More recently, for a general nonlinearity $f(u)$ on the right-hand side, it has been proved in \cite{BNW} that the semilinear problem admits a radial, radially increasing solution if $f(1)=1$ and the radial Morse index of the constant solution $u\equiv1$ is greater than one. When the nonlinearity is the pure power $f(u)=u^{q-1}$, the previous hypothesis on the radial Morse index reads as $q>2+\lambda_2^{\mathrm{rad}}(R)$, where $\lambda_2^{\mathrm{rad}}(R)$ is the first nonzero radial eigenvalue of the Laplacian in the ball $B(R)$, with Neumann boundary conditions. From this assumption, it is apparent that the existence of nonconstant solutions for this kind of problems is related to the radius of the ball or to the exponent $q$. Subsequently, for any $k\in\mathbb N$, under the analogous hypothesis $q>2+\lambda_{k+1}^{\mathrm{rad}}(R)$, the existence of $k$ oscillating radial solutions has been proved in \cite{BGT} via bifurcation techniques, in \cite{BGNT} via a perturbative approach and variational methods, and in \cite{ABF-ESAIM} using the shooting method for ODEs and a phase plane analysis.

For the quasilinear problem the situation is quite different and strongly depends on whether $p$ is greater or less than 2. A first non-existence result for the critical $p$-Laplacian problem with $p>2$, in a small ball, is contained in \cite{AY97}. Much more recently, by means of variational techniques, the existence of a nonconstant radial, nondecreasing solution has been proved in \cite{BF} in the case $p>2$, for every $q>p$, regardless of the radius of the ball. Even more, in \cite{ABF-ESAIM}, it has been proved that, if $q>p>2$, problem \eqref{eq:Pq} admits infinitely many nonconstant radial solutions. In the same paper, also the case $p<2$ has been considered, but the type of result is quite different: it is shown that for every $k\in\mathbb N$ there exists $R_k>0$ such that if the radius is greater than $R_k$, the problem admits $2k$ nonconstant radial solutions, which in couple share the same oscillatory behavior. In particular, for $R>R_1$, the existence of two increasing solutions is obtained via shooting approach, see also \cite{ABF-PRE} for solutions with reverse monotonicity properties in the subcritical case. On the other side, numerical simulations suggest that the existence of such solutions for $q$ large is independent of the radius of the ball. In Figure \ref{fig}, we represent the branch of radial, radially increasing solutions of \eqref{eq:Pq} when varying the parameter $q$. From the picture it is clear that, for a fixed value of $q$ sufficiently large, besides the constant solution $u\equiv1$, there are two more solutions on this branch. We refer to \cite[Section 3]{ABF-ESAIM} for further bifurcation diagrams and comparisons with the cases $p>2$ or $p=2$.

\begin{figure}\label{fig}
\includegraphics[scale=.5]{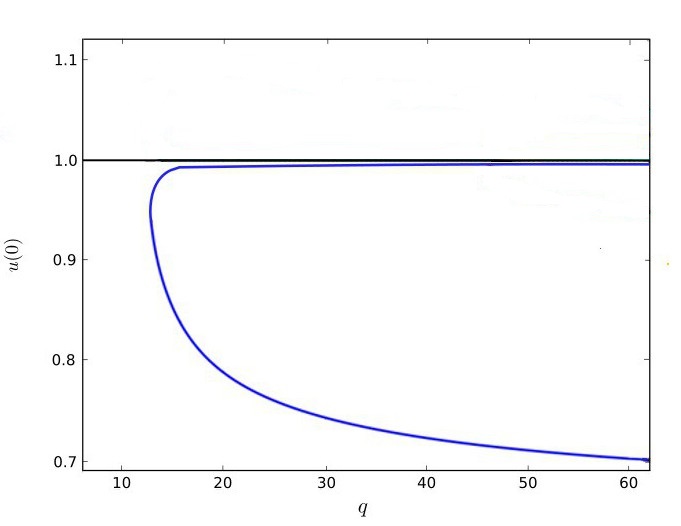}
\caption{In blue, the branch of radially nondecreasing solutions $u$ of \eqref{eq:Pq}, plotted as $u(0)$ as function of $q$. Both the upper and the lower parts of such a branch seem to persist for all values of $q$. Moreover, the blue branch do not bifurcate from the one of constant solutions $u\equiv1$. The figure is obtained numerically with the sotfware AUTO-07p \cite{AUTO}, for problem \eqref{eq:Pq} with $p=1.97$ in dimension $N=1$.}
\end{figure}

In the present paper, we obtain the existence of two increasing solutions, $u_q$ and $v_q$, under the assumption that the exponent $q$ is large enough, independently of the radius of the ball. The variational techniques applied here allow us to detect which of the two solutions has higher energy, and to identify the limit behavior as $q\to\infty$ of the one with lower energy. We further observe that the numerical simulations suggest that the higher energy solution $v_q$ should converge to the constant $1$ as $q\to\infty$, which is an interesting open problem.

In order to state rigorously our results, we introduce here some objects that will be used throughout the paper.

We work in the set
\begin{equation}\label{cone}
\mathcal C:=\{u\in W^{1,p}_{\mathrm{rad}}(B)\,:\, u\ge0,\,u(r_1)\le u(r_2) \mbox{ for all }0<r_1\le r_2\le1\},
\end{equation}
where with abuse of notation we write $u(|x|):=u(x)$.
This set is a closed convex cone in $W^{1,p}(B)$ and was first introduced in \cite{SerraTilli2011} in the context of a similar problem with $p=2$. Working in this cone has the twofold advantage of recovering the compactness in this supercritical regime, cf. Lemma \ref{bounded}, and of knowing a priori the monotonicity of the solutions that will be found therein.
On the other hand, since this cone has empty interior in the $W^{1,p}$-topology, see \cite[Introduction]{SerraTilli2011}, it is not possible to apply directly the Mountain Pass Theorem in $\mathcal C$: thanks to a priori estimates in the cone, we apply the truncation method and refine the Deformation Lemma to find a mountain pass solution of the problem inside the cone.

We introduce a Nehari-type set inside $\mathcal{C}$ as follows
\begin{equation*}
\mathcal N_q:=\left\{u\in\mathcal C\setminus\{0\}\,: \int_B(|\nabla u|^p+|u|^p)dx=\int_B f_q(u)u\,dx\right\},
\end{equation*}
where $f_q$ is a suitable truncated nonlinearity, that is Sobolev-subcritical (see Lemma \ref{truncated} below). Letting also $F_q(u):=\int_0^u f_q(s)ds$, we shall consider the following modified energy functional
\begin{equation*}
I_q(u):= \int_B\left(\frac{|\nabla u|^p}{p}+\frac{|u|^p}{p}-{F}_q(u)\right)dx.
\end{equation*}

The first result of the paper is the existence, for $q$ sufficiently large, of a nonconstant radial solution $u_q$ and the detection of its limit profile as $q\to\infty$.

\begin{theorem}\label{thm:asymptotic_q}
For $q$ sufficiently large there exists a nonconstant solution $u_q\in\mathcal C$ of \eqref{eq:Pq}, which has the following variational characterization
\begin{equation}\label{eq:variational_u_q}
I_q(u_q)=\inf_{u\in\mathcal N_q}I_q(u).
\end{equation}
Moreover, as $q\to \infty$,  $I_q(u_q) < I_q(1)$ and
\begin{equation}\label{uqgoestoG}
u_q \to G \textrm{ in } W^{1,p}(B) \cap C^{0,\nu}(\bar B)
\end{equation}
for any $\nu\in(0,1)$, where $G$ is the unique solution of
\begin{equation}\label{eqforG}
\begin{cases}-\Delta_pG+|G|^{p-2}G=0\quad&\mbox{in }B,\\
G=1&\mbox{on }\partial B.
\end{cases}
\end{equation}
\end{theorem}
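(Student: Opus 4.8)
\emph{Proof plan.} The plan is to obtain $u_q$ as a minimizer of $I_q$ on $\mathcal N_q$, then to compare its energy with $I_q(1)$, and finally to analyze the asymptotics as $q\to\infty$.

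\textbf{Construction of $u_q$.} From Lemma~\ref{truncated} I would use that $f_q$ is Sobolev-subcritical, coincides with $s^{q-1}$ below the truncation level, makes $s\mapsto f_q(s)s^{1-p}$ increasing and satisfies an Ambrosetti--Rabinowitz condition; hence $I_q\in C^1(W^{1,p}_{\mathrm{rad}}(B))$, and for each $u\in\mathcal C\setminus\{0\}$ the fibering map $t\mapsto I_q(tu)$ has a unique maximum point $t_q(u)>0$ with $t_q(u)u\in\mathcal N_q$. Thus $\mathcal N_q\ne\emptyset$ and $c_q:=\inf_{\mathcal N_q}I_q=\inf_{u\in\mathcal C\setminus\{0\}}\max_{t>0}I_q(tu)$, which is bounded below away from $0$. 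A minimizing sequence in $\mathcal N_q$ is bounded in $W^{1,p}$ (Ambrosetti--Rabinowitz) and, by Lemma~\ref{bounded}, in $L^\infty$; using that $\mathcal C$ is weakly closed, that $W^{1,p}(B)\hookrightarrow L^s(B)$ compactly for every subcritical $s$, that functions of the cone converge pointwise a.e.\ along subsequences (monotonicity + Helly) and the uniform $L^\infty$ bound, one passes to the limit in $\int_B f_q(u_n)u_n$ and, by the standard Nehari argument, produces a minimizer $u_q\in\mathcal N_q$ with $I_q(u_q)=c_q$. Since $\mathcal C$ has empty interior one cannot conclude directly that $u_q$ solves a PDE; here I would invoke the invariant-cone techniques of the paper (in the spirit of \cite{SerraTilli2011}) to deduce that $u_q$ is a free critical point of $I_q$ on $W^{1,p}_{\mathrm{rad}}(B)$, hence, by symmetric criticality, a weak solution of $-\Delta_p u+u^{p-1}=f_q(u)$ with $\partial_\nu u=0$ on $\partial B$. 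The strong maximum principle gives $u_q>0$ in $B$, and the a priori bound of Lemma~\ref{bounded} keeps $\|u_q\|_{L^\infty}$ below the truncation threshold, so $f_q(u_q)=u_q^{q-1}$ and $u_q$ solves \eqref{eq:Pq}.

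\textbf{Energy comparison.} Since $f_q(1)=1$ one has $1\in\mathcal N_q$, hence $c_q\le I_q(1)$; and $G\in\mathcal C$ because \eqref{eqforG} makes $G$ $p$-subharmonic, hence radially nondecreasing, with $0<G<1$ in $B$. For functions below the truncation level, $u\in\mathcal N_q\Rightarrow I_q(u)=(\tfrac1p-\tfrac1q)\int_B u^q$, so $I_q(1)=(\tfrac1p-\tfrac1q)|B|$. I would test with $w_q:=t_q(G)G\in\mathcal N_q$: from $|\{G=1\}|=0$ one gets $\int_B G^q\to0$ with $\bigl(\int_B G^q\bigr)^{1/q}\to1$, so the identity $t_q(G)^{q-p}=\|G\|_{W^{1,p}}^p/\int_B G^q$ (valid once $w_q$ is below the truncation level, which holds for $q$ large since $t_q(G)\to1$) yields $\int_B w_q^q=t_q(G)^p\,\|G\|_{W^{1,p}}^p\to\|G\|_{W^{1,p}}^p$. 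Testing \eqref{eqforG} with $G$, together with $r^{N-1}(G'(r))^{p-1}=\int_0^r s^{N-1}G^{p-1}\,ds<\int_0^r s^{N-1}\,ds=r^N/N$, gives $\|G\|_{W^{1,p}}^p=(G'(1))^{p-1}|\partial B|<|\partial B|/N=|B|$. Hence $\int_B w_q^q<|B|$ for $q$ large, so $I_q(u_q)=c_q\le I_q(w_q)<I_q(1)$; in particular $u_q$ is nonconstant, since the only positive constant solution of \eqref{eq:Pq} is $1$.

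\textbf{Limit profile and main difficulty.} The chain $c_q\le I_q(w_q)\to\tfrac1p\|G\|_{W^{1,p}}^p$ and $\|u_q\|_{W^{1,p}}^p=\tfrac{pq}{q-p}c_q$ give $\limsup_q\|u_q\|_{W^{1,p}}^p\le\|G\|_{W^{1,p}}^p$, while $c_q$ bounded below keeps $\|u_q\|_{W^{1,p}}$ away from $0$; along a subsequence $u_q\rightharpoonup u_*\in\mathcal C$. Monotonicity and the two-sided control of $\int_B u_q^q$ (H\"older on a shrinking annulus $[r,1]$ from above, the $L^q$-mean from below) give $\|u_q\|_{L^\infty}=u_q(1)\to1$, so $u_*=1$ on $\partial B$. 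As $G$ is the unique minimizer of $w\mapsto\int_B(|\nabla w|^p+|w|^p)\,dx$ over $\{w\in W^{1,p}(B):\,w|_{\partial B}=1\}$ (strictly convex, coercive, with Euler--Lagrange equation \eqref{eqforG}, and minimizer nonnegative and radial) and $\|u_*\|_{W^{1,p}}^p\le\liminf_q\|u_q\|_{W^{1,p}}^p\le\|G\|_{W^{1,p}}^p$, it follows that $u_*=G$, all quantities coincide, $\|u_q\|_{W^{1,p}}\to\|G\|_{W^{1,p}}$, and by uniform convexity $u_q\to G$ in $W^{1,p}(B)$; subsequence-independence upgrades this to the convergence of the whole family. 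For the H\"older convergence I would combine interior $C^{1,\alpha}$-estimates for the $p$-Laplacian, valid with $q$-independent constants on each ball $\{|x|<r_0\}$, $r_0<1$ (there $u_q\to G(r_0)<1$, so $u_q^{q-1}$ is bounded uniformly in $q$), giving $u_q\to G$ in $C^1_{\mathrm{loc}}(B)$, with a boundary-layer analysis near $\partial B$: $u_q$ attains the value $1$ only in a region shrinking with $q$, $u_q'$ increases up to the crossing radius $\rho_q$ and decreases afterwards, and integrating the radial ODE on a fixed interval $[\sigma_\delta,\rho_q]$ (with $G(\sigma_\delta)=1-\delta$) bounds $u_q'(\rho_q)$ by $u_q'(\sigma_\delta)\to G'(\sigma_\delta)$; this yields a $q$-uniform bound for $\{u_q\}$ in $C^{0,1}(\bar B)$, which interpolated with the uniform convergence $u_q\to G$ gives $u_q\to G$ in $C^{0,\nu}(\bar B)$ for every $\nu\in(0,1)$. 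I expect the main obstacles to be, conceptually, the passage from a minimizer on $\mathcal N_q$ to a genuine solution of \eqref{eq:Pq} --- which is exactly what the invariant-cone construction is for, $\mathcal C$ having empty interior --- and, technically, the $q$-uniform gradient bound up to $\partial B$, which does not come from standard boundary regularity because $u_q^{q-1}$ is bounded uniformly only away from $\partial B$.
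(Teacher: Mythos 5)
Your asymptotic analysis and energy comparison are essentially sound, and in places more quantitative than the paper's: the computation $I_q(1)=(\tfrac1p-\tfrac1q)|B|$ together with the pointwise bound $r^{N-1}(G'(r))^{p-1}=\int_0^r s^{N-1}G^{p-1}\,ds<r^N/N$, which gives $\|G\|_{W^{1,p}(B)}^p<|B|$, is a nice explicit substitute for the paper's contradiction argument (the paper deduces $I_q(u_q)<I_q(1)$ for large $q$ from $c_q\to c_\infty$ and the fact that $c_\infty$ is uniquely achieved by $G\not\equiv1$, Lemma~\ref{cinfinito}). Your identification of the limit as the minimizer of $\tfrac1p\|\cdot\|_{W^{1,p}(B)}^p$ over $\{w\in\mathcal C:\,w=1\mbox{ on }\partial B\}$ matches the paper's route.

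The genuine gap is in the construction of $u_q$. You propose to produce a minimizer of $I_q$ on $\mathcal N_q$ by the direct method and then to ``invoke the invariant-cone techniques'' to conclude that it is a free critical point. But there is no Lagrange-multiplier route available: $\mathcal N_q$ is the intersection of the Nehari set with the cone $\mathcal C$, which has empty interior in $W^{1,p}(B)$, so $\mathcal N_q$ is not a manifold and a constrained minimizer need not a priori satisfy the Euler equation. The paper resolves this by arguing in the opposite direction: it first constructs a critical point of $I_q$ in $\mathcal C$ at the mountain-pass level $c_q$ (Theorem~\ref{mountainpass1}), using the cone-preserving operator $\tilde T$, the locally Lipschitz field $K$ and the deformation Lemma~\ref{deformation1} --- this is where the invariant-cone machinery actually does its work --- and only afterwards identifies $c_q=\inf_{u\in\mathcal C\setminus\{0\}}\sup_{t\ge0}I_q(tu)=\inf_{\mathcal N_q}I_q$ (Lemma~\ref{lemma:nehari}), so that \eqref{eq:variational_u_q} is a consequence of the construction rather than its starting point. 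Your plan, as written, omits the minimax/deformation step that actually converts the variational level into a solution. A secondary point: your boundary-layer argument for the $q$-uniform Lipschitz bound is more delicate than needed and not quite correct as stated ($u_q'$ itself need not be monotone on either side of the crossing radius; only $r^{N-1}(u_q')^{p-1}$ is). The paper obtains a global bound $\|u_q'\|_{L^\infty(B)}\le\bigl(\tfrac{q-p}{q(p-1)}\bigr)^{1/p}$ up to $r=1$ in one stroke from the monotonicity of $H(r)=u_q'(r)^p/p'+\Phi(u_q(r))$ (cf.\ Lemmas~\ref{lem:a_priori_bounds} and~\ref{commonbound}), which yields the $C^{0,\nu}(\bar B)$ compactness directly, with no need for interior estimates or a boundary-layer analysis.
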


An immediate consequence of the previous theorem is that, for $q$ large, $u_q\not\equiv 1$: it is enough to notice that the limit problem \eqref{eqforG} does not admit the constant solution and conclude using the convergence in \eqref{uqgoestoG}. The proof technique for detecting the limit profile is inspired by \cite{GrossiNoris}, cf. also \cite{BF} for the case $p>2$. Moreover, as expressed by \eqref{eq:variational_u_q}, the solution $u_q$ is the minimizer of the modified energy $I_q$ restricted to the Nehari set $\mathcal{N}_q$. Contrarily to what happens for problem \eqref{eq:Pq} in the case $p\geq 2$, in the present case the constant solution $1$ is also a local minimizer of $I_q$ on $\mathcal{N}_q$, although not a global one, for large values of $q$.

\begin{theorem}\label{thm:1minimizer}
For any $q>2$ there exist two constants $\delta_q \in (0,1)$ and $M_q>0$ such that for every $w\in \mathcal{N}_q$ with the property $\|w-1\|_{W^{1,p}(B)}\leq\delta_q$, it holds
\[
I_q(w)-I_q(1)\ge M_q\|w-1\|_{W^{1,p}(B)}^p.
\]
\end{theorem}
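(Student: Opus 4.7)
The plan is to exploit the Nehari identity to rewrite $I_q(w)-I_q(1)$ in a form that Taylor-expands cleanly around $w\equiv 1$, and then use the standing assumption $p<2$ to absorb the indefinite quadratic term that arises. Set $v:=w-1$. Since $w\in\mathcal{C}$ is radial, nonnegative and nondecreasing, the trace theorem yields $\|w\|_{L^\infty(B)}=w(1)\leq 1+C\|v\|_{W^{1,p}(B)}\leq 1+C\delta$, so for $\delta$ small enough $w$ lies in the range where $f_q(t)=t^{q-1}$ (cf. Lemma \ref{truncated}), and moreover $\|v\|_{L^\infty(B)}\leq 1$. Since $w\in\mathcal{N}_q$ gives $\int_B(|\nabla w|^p+w^p)\,dx=\int_B w^q\,dx$, a direct computation yields
\[
I_q(w)=\frac{q-p}{pq}\int_B w^q\,dx,\qquad I_q(1)=\frac{q-p}{pq}|B|,
\]
and, using Nehari a second time,
\[
I_q(w)-I_q(1)=\frac{q-p}{pq}\Bigl[\int_B|\nabla v|^p\,dx+\int_B\bigl((1+v)^p-1\bigr)\,dx\Bigr].
\]

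Taylor-expanding $(1+v)^p-1$ as well as the Nehari relation $\int_B[(1+v)^q-(1+v)^p]\,dx=\int_B|\nabla v|^p\,dx$ in powers of $v$, and solving the latter for the linear moment $\int_B v\,dx$, an elementary simplification gives the key expansion
\[
I_q(w)-I_q(1)=\frac{1}{p}\int_B|\nabla v|^p\,dx-\frac{q-p}{2}\int_Bv^2\,dx+R(v),
\]
with $R(v)$ a higher-order remainder bounded by $C\int_B|v|^r\,dx$ for some $r>2$. The pointwise estimates controlling the remainders of the $(1+v)^r$ expansions rely on the convexity of $t\mapsto t^r$ on $[0,\infty)$ together with $\|v\|_{L^\infty(B)}\leq 1$.

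The main obstacle is to dominate the indefinite term $-\frac{q-p}{2}\|v\|_{L^2(B)}^2$, which could in principle overwhelm the positive leading term for small $v$. This is where $p<2$ is essential. Combining the Sobolev embedding $W^{1,p}(B)\hookrightarrow L^{p^*}(B)$ (and its radial variants when $p^*<2$) with interpolation against $\|v\|_{L^\infty(B)}\leq 1$, and the assumption $\|v\|_{W^{1,p}(B)}\leq\delta$, one obtains
\[
\|v\|_{L^2(B)}^2\leq C\,\delta^{\alpha}\,\|v\|_{W^{1,p}(B)}^p
\]
for some $\alpha>0$ (one may take $\alpha=2-p$ when $p^*\geq 2$ and $\alpha=p^*-p$ otherwise), with an analogous bound for $R(v)$; thus both terms become negligible compared to $\|v\|_{W^{1,p}(B)}^p$ once $\delta_q$ is chosen small enough.

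Finally, to upgrade $\int_B|\nabla v|^p\,dx$ to the full $W^{1,p}$-norm, I invoke the Poincar\'e--Wirtinger inequality together with a bound on the average $\bar v:=|B|^{-1}\int_B v\,dx$ furnished by the Nehari expansion, which forces $|\bar v|=O(\|\nabla v\|_{L^p}^p+\|v\|_{L^2}^2)$ and in particular $|\bar v|^p=o(\|v\|_{W^{1,p}(B)}^p)$. This yields $\int_B|v|^p\,dx\leq C_q\int_B|\nabla v|^p\,dx+o(\|v\|_{W^{1,p}(B)}^p)$, hence $\int_B|\nabla v|^p\,dx\geq c_q\|v\|_{W^{1,p}(B)}^p$ for $\delta$ small. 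Assembling these pieces, for $\delta_q$ sufficiently small the negative contributions get absorbed into a small fraction of $\frac{1}{p}\int_B|\nabla v|^p\,dx$, producing the claimed lower bound with some $M_q>0$.
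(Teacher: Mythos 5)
Your proposal is correct in substance and its overall architecture coincides with the paper's: expand $I_q(w)-I_q(1)$ around the constant $1$, isolate a positive term $\tfrac1p\int_B|\nabla v|^p\,dx$ against an indefinite term of order $\int_Bv^2\,dx$, absorb the latter via the Sobolev embedding and the hypothesis $p<2$ (splitting into the cases $p^*\ge 2$ and $p^*<2$ exactly as the paper does), and restore the full $W^{1,p}$-norm through a Poincar\'e--Wirtinger inequality that is only valid \emph{on} $\mathcal{N}_q$. The genuine difference lies in how that last inequality is obtained: the paper proves it (Lemma \ref{lem:PW}) by a compactness/contradiction argument, normalizing $v_n=(w_n-1)/\|w_n-1\|_{L^p}$ and using the Nehari identity to exclude a nonzero constant limit, whereas you extract it quantitatively by Taylor-expanding the Nehari relation and solving for the mean $\bar v$, getting $|\bar v|=O(\|\nabla v\|_{L^p}^p+\|v\|_{L^2}^2)$ and hence $|\bar v|^p=o(\|v\|_{W^{1,p}}^p)$. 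Your route is more elementary and yields an explicit (in principle computable) constant where the paper's is non-constructive; the paper's route is shorter and sidesteps bookkeeping of remainders. Two small imprecisions in your write-up, neither fatal: the exact coefficient $-\tfrac{q-p}{2}$ in front of $\int_Bv^2\,dx$ and the claim $R(v)=O(\int_B|v|^r)$ with $r>2$ are not literally justified, because $t\mapsto t^p$ is not $C^2$ at $t=0$ for $p<2$ and $w=1+v$ is not bounded away from $0$ near the origin, so the $p$-power expansion only gives a uniform $O(v^2)$ error (the paper avoids this by using the convexity inequality $(1+x)^p\ge 1+px$ instead of a second-order expansion); since every such term is quadratic in $v$ and is absorbed by the same interpolation $\|v\|_{L^2}^2\le C\delta^{\alpha}\|v\|_{W^{1,p}}^p$, the argument goes through, but you should state the expansion as an inequality with an unspecified $C_q\int_Bv^2\,dx$ rather than with sharp coefficients. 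Also, the pointwise bound $\|v\|_{L^\infty}\le 1$ should be justified via Lemma \ref{bounded} applied to $v_+\in\mathcal{C}$ together with $w\ge0$, rather than by a trace theorem.
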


From one side, being $u_q$ and 1 both minimizers on the Nehari set, it is more difficult to distinguish them using a comparison between their energies. We notice in passing that, with respect to the case $p\ge 2$, an additional difficulty arises here due to the fact that functional $I_q$ is not of class $C^2$ for $p<2$. The key result to prove the previous theorem is Lemma \ref{lem:PW}, in which we show that a Poincar\'e-Wirtinger-type inequality holds in a neighborhood of 1. On the other side, the presence of two minimizers produces a third solution. Indeed, taking advantage of Theorem \ref{thm:1minimizer}, we can prove the existence, for $q$ sufficiently large, of another nonconstant solution $v_q$ of \eqref{eq:Pq}, corresponding to a mountain pass type solution over $\mathcal{N}_q$.

\begin{theorem}\label{thm:terza_sol}
For $q$ sufficiently large there exists another nonconstant solution $v_q\in\mathcal C$ of \eqref{eq:Pq},  distinct from $u_q$.
\end{theorem}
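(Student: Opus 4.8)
The plan is to obtain $v_q$ as a mountain pass critical point of $I_q$ restricted to the Nehari set $\mathcal N_q$, using as the two low-energy wells the global minimizer $u_q$ of Theorem \ref{thm:asymptotic_q} and the constant function $1$, which by Theorem \ref{thm:1minimizer} is a strict local minimizer of $I_q|_{\mathcal N_q}$. First I would note that $1\in\mathcal N_q$: since, by Lemma \ref{truncated}, $f_q$ agrees with $s\mapsto s^{q-1}$ below the truncation level and $f_q(1)=1$, the constant $1$ satisfies the Nehari identity. Next, for $q$ large, $\|u_q-1\|_{W^{1,p}(B)}>\delta_q$: otherwise Theorem \ref{thm:1minimizer} would give $I_q(u_q)\ge I_q(1)+M_q\|u_q-1\|_{W^{1,p}(B)}^p\ge I_q(1)$, contradicting $I_q(u_q)=\inf_{\mathcal N_q}I_q<I_q(1)$ from Theorem \ref{thm:asymptotic_q}.

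I would then set up the minimax. Using the fibering property of $\mathcal N_q$ (by the $p$-superlinearity of $f_q$, each ray $\{tu:\,t>0\}$ with $u\in\mathcal C\setminus\{0\}$ meets $\mathcal N_q$ in a unique point depending continuously on $u$), the segment $s\mapsto(1-s)1+su_q$, which stays in $\mathcal C\setminus\{0\}$ by convexity, can be projected onto $\mathcal N_q$ to give a path from $1$ to $u_q$; hence the class
\[
\Gamma:=\big\{\gamma\in C([0,1];\mathcal N_q)\,:\,\gamma(0)=1,\ \gamma(1)=u_q\big\}
\]
is nonempty, and $c_q:=\inf_{\gamma\in\Gamma}\max_{s\in[0,1]}I_q(\gamma(s))$ is finite since $I_q$ is continuous on $\mathcal N_q$ (here the subcritical growth of $F_q$ enters). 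Every $\gamma\in\Gamma$ must meet the sphere $\{\|w-1\|_{W^{1,p}(B)}=\delta_q\}$ inside $\mathcal N_q$, so Theorem \ref{thm:1minimizer} forces $c_q\ge I_q(1)+M_q\delta_q^p>I_q(1)>I_q(u_q)$: this is exactly the mountain pass geometry, with both endpoints strictly below the minimax level.

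The heart of the argument is to turn $c_q$ into a critical value. Here I would invoke the refined Deformation Lemma already developed in the paper, which deforms inside the invariant cone $\mathcal C$ (recall $\mathcal C$ has empty $W^{1,p}$-interior, so the usual theorem does not apply directly), combined with the Nehari-constraint machinery; to bypass the failure of $C^2$ regularity of $I_q$ for $p<2$, it is convenient to work on the positive unit sphere of $\mathcal C$ with the $C^1$ functional $u\mapsto\max_{t>0}I_q(tu)$ in the spirit of Szulkin--Weth and transport the critical point back to $\mathcal N_q$. This produces a Palais--Smale sequence $(w_n)\subset\mathcal N_q\subset\mathcal C$ for $I_q|_{\mathcal N_q}$ at level $c_q$. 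Since $f_q$ is Sobolev-subcritical and $p$-superlinear, $I_q$ is coercive on $\mathcal N_q$, so $(w_n)$ is bounded in $W^{1,p}_{\mathrm{rad}}(B)$; the compact radial Sobolev embedding and the subcriticality of $f_q$, together with the $(S_+)$ property of $-\Delta_p$, then give, along a subsequence, strong convergence to some $v_q\in\mathcal N_q$ with $I_q(v_q)=c_q$ and $v_q$ a critical point of $I_q$ in $\mathcal C$ on the constraint $\mathcal N_q$. Exactly as in the proof of Theorem \ref{thm:asymptotic_q}, such a $v_q$ is a weak solution of the truncated equation; bounding $c_q$ from above along the explicit path above and invoking the a priori estimate in the cone (Lemma \ref{bounded}), one checks that for $q$ large $\|v_q\|_{L^\infty}$ stays below the truncation threshold, so that $v_q$ solves \eqref{eq:Pq}.

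Finally, $v_q$ is the solution claimed: $I_q(v_q)=c_q>I_q(1)$ excludes $v_q\equiv1$, while $c_q>I_q(u_q)=\inf_{\mathcal N_q}I_q$ gives $v_q\neq u_q$; moreover the only nonnegative constants $c$ with $c^{p-1}=c^{q-1}$ are $0$ and $1$, and $0$ is ruled out by $v_q>0$, so $v_q$ is nonconstant. The main obstacle I anticipate is precisely the deformation step: building a pseudo-gradient flow that simultaneously remains in the cone $\mathcal C$ and respects the constraint $\mathcal N_q$ while $I_q$ is merely $C^1$, and then removing the truncation through the a priori bound in $\mathcal C$ --- this is where the invariant-cone structure and Lemma \ref{bounded} do the real work; by contrast, the compactness of the minimax sequence is routine given the subcritical truncation and the radial symmetry.
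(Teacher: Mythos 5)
Your geometric setup is right (both $1$ and $u_q$ are local minimizers on $\mathcal N_q$, separated in $W^{1,p}$ as in Remark \ref{rem:u_q_far_1}, and any path between them must cross the sphere $\{\|w-1\|_{W^{1,p}(B)}=\delta_q\}$, so Theorem \ref{thm:1minimizer} pushes the minimax level strictly above $I_q(1)$), but the step you yourself flag as ``the main obstacle'' --- producing a critical point at the minimax level --- is genuinely missing, and it is precisely the point where your one-dimensional, $\mathcal N_q$-constrained minimax cannot be closed with the tools available. The deformation of Lemma \ref{deformation1} is a \emph{free} flow in the cone $\mathcal C$: it preserves $\mathcal C$ but does not preserve $\mathcal N_q$, so deforming a path $\gamma\in C([0,1];\mathcal N_q)$ takes you out of your admissible class $\Gamma$ and the usual contradiction argument breaks down. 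Nor can you fall back on constrained critical point theory: as the paper stresses, $\mathcal N_q$ is the intersection of the Nehari manifold with $\mathcal C$ and is not a manifold (and $\mathcal C\cap\mathcal S^1$ is not one either, since $\mathcal C$ has empty interior), so a Szulkin--Weth reduction to $u\mapsto\max_{t>0}I_q(tu)$ on the ``positive unit sphere of $\mathcal C$'' only relocates the problem: you would still need a cone-invariant deformation of that non-manifold set, which is not constructed anywhere in the paper and is not a routine modification of Lemma \ref{deformation1}, especially since $I_q$ is only $C^1$ for $p<2$.

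The paper's resolution is a specific device absent from your proposal: it replaces one-dimensional paths on $\mathcal N_q$ by two-dimensional surfaces $\gamma:Q=[R_1,R_2]\times[0,1]\to\mathcal C$ agreeing with $\gamma_0(t,s)=t(su_q+1-s)$ on $\partial Q$, where the extra variable $t$ is transverse to the Nehari set (Lemma \ref{lem:qR} fixes the signs of $I_q'(\gamma_0)[\gamma_0]$ at $t=R_1$ and $t=R_2$). The level $d_q$ in \eqref{eq:d_q} is then a free minimax in $\mathcal C$, compatible with Lemma \ref{deformation1}, and the crucial lower bound $d_q\ge I_q(1)+M_q\delta_q^p$ is recovered by a topological intersection argument: Miranda's theorem applied to $\mathcal F(t,s)=\|h_q(\gamma(t,s))\gamma(t,s)-1\|_{W^{1,p}(B)}-\delta_q$ and $\mathcal G(t,s)=I_q'(\gamma(t,s))[\gamma(t,s)]$ forces \emph{every} admissible surface --- including deformed ones --- to cross $\mathcal N_q$ at distance exactly $\delta_q$ from $1$, where Remark \ref{rem:strict} applies. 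To complete your argument you must either supply this two-dimensional linking construction or build a genuinely $\mathcal N_q$-preserving and cone-preserving pseudo-gradient flow; the rest of your outline (nonemptiness of the path class via Lemma \ref{gH}, removal of the truncation, and the energy comparisons distinguishing $v_q$ from $1$ and $u_q$) is consistent with the paper.
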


We observe that, being $\mathcal{N}_q$ the intersection between the Nehari manifold and the cone $\mathcal C$, it has no more the structure of a manifold. Therefore, also for this second solution, we cannot apply directly the standard theorems of Critical Point Theory. In this case, we need to define a candidate critical level in terms of two-dimensional paths, cf. definition \eqref{eq:d_q}, and then use again the refined version of the deformation lemma inside the cone $\mathcal C$. Compared with the shooting method used in \cite{ABF-ESAIM}, one of the advantages of this approach is that we know, by construction, that $v_q$ has higher energy than $u_q$.

\medskip

The paper is organized as follows. In Section \ref{sec:truncated} we establish some a priori bounds for the solutions of \eqref{eq:Pq} belonging to $\mathcal{C}$; this allows us to define a truncated nonlinearity that is Sobolev-subcritical.  In Section \ref{sec:existence_mp} we apply a mountain pass type theorem inside the cone $\mathcal{C}$, in order to prove the existence of a mountain pass solution of \eqref{eq:Pq}. In order to show that such solution is nonconstant for sufficiently large values of $q$, we detect its limiting behaviour as $q\to\infty$: this is done in Section \ref{sec:q_large}, where we can conclude the proof of Theorem \ref{thm:asymptotic_q}. We prove Theorem \ref{thm:1minimizer} in Section \ref{sec:1minimizer}. The property stated therein is the main ingredient for the proof of the existence of a third solution, that is concluded in Section \ref{sec:terza_sol}.

\section{A priori estimates and truncated problem}\label{sec:truncated}
In this section we establish some a priori estimates for the solutions of \eqref{eq:Pq} belonging to $\mathcal{C}$, and also for a slightly more general problem. Our aim is to truncate the nonlinearity $u^{q-1}$, in order to replace it with a Sobolev subcritical one, but keeping the same $\mathcal{C}$-solutions.

Since we are interested in the regime $q\to\infty$, in the following we take, for simplicity,
\[
q>2,
\]
in such a way that the nonlinearities involved are of class $C^1$ also in the origin.

Let us first recall some known properties of the set $\mathcal{C}$ defined in \eqref{cone}, since it will play a very important role in all the paper. We note that the definition of $\mathcal C$ is well-posed because $W^{1,p}_{\mathrm{rad}}(B)$-functions can be taken continuous in $(0,1]$. Moreover, by monotonicity, for every $u\in\mathcal C$, we can set $u(0):=\lim_{r\to0^+}u(r)$ and consider $u\in C(\bar{B})$. Finally, being nondecreasing, every $u\in\mathcal C$ is differentiable a.e. and $u'(r)\ge 0$ where it is defined.
As already mentioned in the Introduction, the set $\mathcal C$ is a closed convex cone in $W^{1,p}(B)$: for all $u,\,v\in\mathcal C$ and $\lambda\ge0$ the following properties hold
\begin{itemize}
\item[(i)] $\lambda u\in \mathcal C$;
\item[(ii)] $u+v\in \mathcal C$;
\item[(iii)] if also $-u\in\mathcal C$, then $u\equiv0$;
\item[(iv)] $\mathcal C$ is closed for the topology of $W^{1,p}$.
\end{itemize}

Working in the cone $\mathcal C$ allows us to treat supercritical nonlinearities thanks to the property stated in the following lemma.

\begin{lemma}[{\cite[Lemma 2.2]{BF}}]\label{bounded}
For every $t\in[1,\infty)$ there exists $C(N,t)$ such that
$$\|u\|_{L^\infty(B)}\le C(N,t)\|u\|_{W^{1,t}(B)}\quad\mbox{for all }u\in\mathcal{C}.$$
\end{lemma}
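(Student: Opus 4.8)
The plan is to exploit the monotonicity of functions in $\mathcal{C}$ together with the one-dimensional structure of radial functions. Since any $u\in\mathcal{C}$ is nondecreasing in $r=|x|$, its supremum on $B$ is attained at the boundary: $\|u\|_{L^\infty(B)}=u(1)$. The strategy is therefore to bound $u(1)$ by the $W^{1,t}$-norm of $u$, and here the key point is that $u$ being nondecreasing forces a lower bound on its average (and hence on $\|u\|_{L^t(B)}$) in terms of $u(1)$.

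First I would write, for any radius $\rho\in(0,1)$ and any $r\in[\rho,1]$, the inequality $u(r)\ge u(\rho)$, which by the fundamental theorem of calculus for absolutely continuous (radial $W^{1,t}$) functions gives $u(1)-u(\rho)=\int_\rho^1 u'(s)\,ds\le \int_0^1|u'(s)|\,ds$. Estimating this last integral by H\"older in one variable with weight $s^{N-1}$ (to pass to the $N$-dimensional gradient), one obtains $u(1)-u(\rho)\le C(N,t)\|\nabla u\|_{L^t(B)}$. On the other hand, since $u\ge u(\rho)$ on the annulus $\{\rho<|x|<1\}$ for every such $\rho$, and in fact $u\ge u(r)$ for $|x|\ge r$, integrating gives a lower bound of the form $\|u\|_{L^t(B)}^t\ge \int_{1/2}^1 u(r)^t\, r^{N-1}\,dr\ge c(N)\,u(1/2)^t$, hence $u(1/2)\le C(N,t)\|u\|_{L^t(B)}$. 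Combining the two displays with $\rho=1/2$ yields $\|u\|_{L^\infty(B)}=u(1)\le C(N,t)\bigl(\|\nabla u\|_{L^t(B)}+\|u\|_{L^t(B)}\bigr)\le C(N,t)\|u\|_{W^{1,t}(B)}$, which is the claim.

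The main technical point to be careful about — rather than a deep obstacle — is the justification that a radial $W^{1,t}$ function is absolutely continuous on compact subintervals of $(0,1]$, so that the fundamental theorem of calculus applies and $u'$ coincides a.e.\ with the weak radial derivative; this is exactly the regularity recalled in the paragraph preceding the lemma (functions in $W^{1,p}_{\mathrm{rad}}$ can be taken continuous on $(0,1]$ and are differentiable a.e.). One must also track the dependence of the constants on $N$ and $t$ through the weights $s^{N-1}$, which is elementary since all integrations are over bounded intervals bounded away from $0$ where it matters. Since the result is quoted from \cite[Lemma 2.2]{BF}, I would simply refer to that proof, the sketch above being its substance.
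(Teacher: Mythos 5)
Your argument is correct and is in substance the proof of the cited result \cite[Lemma 2.2]{BF}; the paper itself does not reprove the lemma but simply imports it, and your chain $\|u\|_{L^\infty(B)}=u(1)\le u(1/2)+\int_{1/2}^1 u'(s)\,ds$ together with the lower bound $c(N)\,u(1/2)^t\le\|u\|_{L^t(B)}^t$ from monotonicity is exactly the intended mechanism. One small point of hygiene: keep the lower limit of integration at $\rho=1/2$ rather than enlarging to $\int_0^1|u'(s)|\,ds$, since the one-dimensional H\"older step against the weight $s^{N-1}$ degenerates at the origin (it would require $t>N$), whereas on $[1/2,1]$ the weight is bounded below and the estimate closes exactly as you note in your final remark.
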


In particular, by applying Lemma \ref{bounded} with $t=p$, we obtain that
\begin{equation}\label{eq:C_embedded_Linf}
\mathcal{C} \subset L^\infty(B).
\end{equation}
Another consequence of Lemma \ref{bounded} is that the cone $\mathcal C$ endowed with the $W^{1,p}$-norm is compactly embedded in $L^t(B)$ for all $t\in[1,\infty)$, see {\cite[Lemma 2.3]{BF}} for details.

Let $p^*$ be the critical exponent for the Sobolev embedding $W^{1,p}(B)\hookrightarrow L^t(B)$, namely
\[p^*:=\begin{cases}
\frac{Np}{N-p}\quad&\mbox{if }N>p,\\
+\infty&\mbox{otherwise.}
\end{cases}
\]
Fix $\ell\in (p,p^*)$. We now consider a class of modified problems
\begin{equation}\label{Pphi}
\begin{cases}
-\Delta_p u+ u^{p-1}=\varphi(u)\quad&\mbox{in }B,\\
u>0&\mbox{in }B,\\
\partial_\nu u=0&\mbox{on }\partial B,
\end{cases}
\end{equation}
where $\varphi$ can be any function of the form
\begin{equation}\label{eq:phi_s0}
\varphi(s)=\varphi_{q,s_0}(s):=\begin{cases}s^{q-1}\quad&\mbox{if }s\in[0,s_0],\\
s_0^{q-1}+\frac{q-1}{\ell-1}s_0^{q-\ell}(s^{\ell-1}-s_0^{\ell-1})&\mbox{if } s\in(s_0,\infty),\end{cases}
\end{equation}
with $s_0\in (2,\infty)$.  Notice that the functions $\varphi$ are of class $C^1$, nonnegative and increasing. In the next lemma we prove that the solutions of \eqref{Pphi} belonging to $\mathcal{C}$ are bounded in the $C^1$-norm, independently of $q$ and $s_0$.

\begin{lemma}\label{lem:a_priori_bounds}
Every solution $u\in\mathcal C$ of \eqref{Pphi},  for every $\varphi$ of the form \eqref{eq:phi_s0},  satisfies
\[
\|u\|_{L^\infty(B)} \leq 1+(p')^{1/p}
\quad\mbox{and}\quad
\|u'\|_{L^\infty(B)} \leq (p')^{1/p}.
\]
\end{lemma}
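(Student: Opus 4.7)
The plan is to exploit the ODE structure coming from the radial symmetry of any $u\in\mathcal{C}$ and to derive a monotone ``energy'' quantity in the radial variable. Since $u$ is a positive weak solution of the degenerate elliptic equation with bounded right-hand side (by \eqref{eq:C_embedded_Linf}), standard $C^{1,\alpha}$ regularity applies, so $u$ satisfies pointwise the radial ODE
\[
-\bigl(r^{N-1}(u')^{p-1}\bigr)' = r^{N-1}\bigl(\varphi(u)-u^{p-1}\bigr),\qquad r\in(0,1),
\]
together with $u'(1)=0$ (Neumann), $u'\ge 0$ everywhere (membership in $\mathcal{C}$), and $u'(0)=0$ (obtained by integrating the ODE near the origin, since the right-hand side is bounded).

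The core step is to multiply this ODE by $u'$. Using $\bigl((u')^{p-1}\bigr)'u'=\tfrac{p-1}{p}\bigl((u')^p\bigr)'$ (valid because $u'\ge 0$) and observing that the right-hand side rewrites as $\tfrac{d}{dr}\bigl[\tfrac{u^p}{p}-\Phi(u)\bigr]$, with $\Phi(t):=\int_0^t\varphi(s)\,ds$, I would obtain
\[
\frac{d}{dr}H(r)=-\frac{N-1}{r}(u'(r))^p\le 0,\qquad H(r):=\frac{p-1}{p}(u')^p-\frac{u^p}{p}+\Phi(u).
\]
Thus $H$ is nonincreasing, and $H(r)\le H(0)$ rewrites, after setting $g(t):=t^p-p\Phi(t)$, as
\[
(u'(r))^p\le \frac{1}{p-1}\bigl[g(u(r))-g(u(0))\bigr],\qquad r\in[0,1].
\]

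I would then split into two cases. If $u(0)\ge 1$, then $u(r)\ge 1$ for all $r$, and a direct computation shows $\varphi(s)\ge s^{p-1}$ on $[1,\infty)$ (immediate on $[1,s_0]$ since $q>p$; on $[s_0,\infty)$ by checking $\varphi(s_0)\ge s_0^{p-1}$ together with $\varphi'(s)\ge (p-1)s^{p-2}$, which uses $q>\ell$ and $s_0>2$). Evaluating the inequality $H(1)\le H(0)$ with $u'(1)=0$ yields $\int_{u(0)}^{u(1)}(\varphi(s)-s^{p-1})\,ds\le 0$; since the integrand is nonnegative, $u$ must be a constant, and the only admissible constant is $u\equiv 1$, for which the two bounds are trivial. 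If instead $u(0)<1$, I would analyse $g$: for $t\in[0,1]$, $g(t)=t^p-\tfrac{p}{q}t^q\ge t^p(1-p/q)\ge 0$, while globally $g$ attains its maximum at $t=1$ with $g(1)=1-p/q\le 1$, since $g'(t)=p(t^{p-1}-\varphi(t))$ is nonnegative on $[0,1]$ and nonpositive on $[1,\infty)$ by the same comparison. Plugging these into the estimate above gives $(u'(r))^p\le \tfrac{1}{p-1}\le p'$, hence $\|u'\|_{L^\infty}\le (p')^{1/p}$, and by integration $\|u\|_{L^\infty}=u(1)\le u(0)+\|u'\|_{L^\infty}\le 1+(p')^{1/p}$.

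The main obstacle, and essentially the only place where the explicit form of $\varphi$ genuinely enters, is the pointwise inequality $\varphi(s)\ge s^{p-1}$ for every $s\ge 1$, uniformly in $q$ and $s_0$: on the second branch $s>s_0$ this reduces to the estimate $\tfrac{q-1}{\ell-1}s_0^{q-\ell}\ge 1$, which follows from $q>\ell$ and $s_0>1$. Once this comparison is secured, both the monotonicity of $H$ and the elementary analysis of $g$ produce the two bounds uniformly in $q$ and $s_0$, as required.
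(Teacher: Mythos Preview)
Your proof is correct and uses the same core idea as the paper: the radial Hamiltonian
\[
H(r)=\tfrac{1}{p'}(u')^p+\Phi(u)-\tfrac{u^p}{p}
\]
is nonincreasing, and this yields the bound on $u'$ once $u(0)\le 1$ is secured. The only genuine difference is how you treat the inequality $u(0)\le 1$. The paper proves it directly, by integrating the PDE over $B$ and showing that the contribution from $\{u\ge s_0\}$ has a definite sign (this requires a somewhat delicate estimate for $u^{p-1}-\varphi(u)$ on $(s_0,\infty)$, borrowed from an external reference). You instead argue by case distinction: if $u(0)\ge 1$, you evaluate the Hamiltonian at \emph{both} endpoints (using $u'(0)=u'(1)=0$) to obtain $\int_{u(0)}^{u(1)}(\varphi(s)-s^{p-1})\,ds\le 0$, and then the elementary comparison $\varphi(s)\ge s^{p-1}$ on $[1,\infty)$ forces $u$ to be constant, hence $u\equiv 1$. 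This is a clean, self-contained alternative that avoids the external estimate and uses only the Hamiltonian identity; as a bonus it yields the slightly sharper bound $(u')^p\le 1/(p-1)$. One small remark: your closing sentence ``on the second branch this reduces to $\tfrac{q-1}{\ell-1}s_0^{q-\ell}\ge 1$'' is imprecise as a standalone reduction, but your earlier argument (matching $\varphi(s_0)\ge s_0^{p-1}$ with $\varphi'(s)\ge (p-1)s^{p-2}$ for $s>s_0$) is the correct justification and suffices.
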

\begin{proof}
Let $\varphi=\varphi_{q,s_0}$ be any function of the form \eqref{eq:phi_s0} and let $u\in \mathcal{C}$ be any solution of \eqref{Pphi}. We first show that
\begin{equation}\label{eq:u(0)<1}
u(0)\leq1.
\end{equation}
To this aim, suppose by contradiction that $u(0)>1$. By using equation \eqref{eq:phi_s0} and the fact that $u$ is nondecreasing, we obtain
\[
\int_{\{u\ge s_0\}}(u^{p-1}-\varphi(u))\,dx=
-\int_{\{u\le s_0\}}(u^{p-1}-u^{q-1})\,dx>0.
\]
This contradicts the fact that, for $u> s_0$,
\begin{multline*}
u^{p-1}-\varphi(u)
=u^{p-1}-\frac{q-1}{\ell-1}s_0^{q-\ell}u^{\ell-1}+\frac{q-\ell}{\ell-1}s_0^{q-1} \\
\leq u^{p-1}-\frac{q-1}{2(\ell-1)}s_0^{q-\ell}u^{\ell-1}+\frac{q-\ell}{2(\ell-1)}s_0^{q-1} <0,
\end{multline*}
where we used that $\varphi$ is nonnegative and, in the last line, we applied relation (62) in \cite{BF}, with $M=2$. Hence \eqref{eq:u(0)<1} is established.

Proceeding similarly to \cite[Lemma 2.2]{ABF-ESAIM}, we let, for any $u\geq 0$, $\Phi(u)=\int_1^u \varphi(s)\,ds$ and, for any $r\in [0,1]$,
\[
H(r):=\frac{u'(r)^p}{p'}+\Phi(u(r))
\]
with $1/p'=1-1/p$. By making use of the equation satisfied by $u$ in radial form, we conclude that
\[
H'(r)=-\frac{N-1}{r}u'(r)^p \leq 0 \quad \text{ for every } r\in (0,1].
\]
Being $\Phi\geq0$ and $u'(0)=0$, this implies
\[
\frac{u'(r)^p}{p'} \leq H(r) \leq H(0)=\Phi(u(0))=u(0)^{q-1} \leq 1
 \quad \text{ for every } r\in [0,1],
\]
where in the last step we used \eqref{eq:u(0)<1}. Consequently,
\[
u(r) = u(0) + \int_0^r u'(s) \,ds \leq 1+(p')^{1/p}
\]
for every $r\in [0,1]$.
\end{proof}

In the light of Lemma \ref{lem:a_priori_bounds}, we now choose a specific function $\varphi$ of the form \eqref{eq:phi_s0} in such a way that every solution of \eqref{Pphi} with this specific $\varphi$,  belonging to $\mathcal{C}$, solves also the original problem \eqref{eq:Pq}. To this aim, we choose $s_0$ greater both than the $L^\infty$ bound and than another constant that will be needed later; from now on we let
\begin{equation}\label{eq:s0}
s_0:=\max \left\{ 2+(p')^{1/p},  C(N,p)(1+|B|^{1/p}) \right\},
\end{equation}
$C(N,p)$ being the constant that appears in Lemma \ref{bounded}.

\begin{lemma}\label{truncated}
Define ${f}_q(s):=\varphi_{q,s_0}(s)$.
If $u\in\mathcal C$ solves
\begin{equation}\label{eq:f_tilde_q}
\begin{cases}
-\Delta_p u+ u^{p-1}={f}_q(u)\quad&\mbox{in }B,\\
u>0&\mbox{in }B,\\
\partial_\nu u=0&\mbox{on }\partial B,
\end{cases}
\end{equation}
then $u$ solves \eqref{eq:Pq}.
\end{lemma}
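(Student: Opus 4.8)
The plan is to show that every $u\in\mathcal C$ solving \eqref{eq:f_tilde_q} automatically stays below the truncation threshold $s_0$, so that along such $u$ the truncated nonlinearity $f_q$ coincides with the original pure power $s^{q-1}$; the two problems then have literally the same equation, positivity requirement and boundary condition, whence $u$ solves \eqref{eq:Pq}.

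First I would observe that $f_q=\varphi_{q,s_0}$ is exactly of the admissible form \eqref{eq:phi_s0}, with the specific value $s_0$ fixed in \eqref{eq:s0}; note that $s_0\in(2,\infty)$, as is required in \eqref{eq:phi_s0}. Hence Lemma \ref{lem:a_priori_bounds} applies to $u$ and gives the uniform bound $\|u\|_{L^\infty(B)}\le 1+(p')^{1/p}$.

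Next I would compare this bound with the truncation level. By the very definition \eqref{eq:s0} one has $s_0\ge 2+(p')^{1/p}>1+(p')^{1/p}\ge\|u\|_{L^\infty(B)}$, and since $u\ge0$ because $u\in\mathcal C$, this yields $0\le u(x)\le s_0$ for every $x\in B$. On the interval $[0,s_0]$ the definition \eqref{eq:phi_s0} reads $\varphi_{q,s_0}(s)=s^{q-1}$, so $f_q(u(x))=u(x)^{q-1}$ for every $x\in B$. Substituting this identity into the equation in \eqref{eq:f_tilde_q} gives $-\Delta_p u+u^{p-1}=u^{q-1}$ in $B$, while the conditions $u>0$ in $B$ and $\partial_\nu u=0$ on $\partial B$ are common to both problems; therefore $u$ solves \eqref{eq:Pq}.

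There is no genuinely hard step here: the analytic content has been front-loaded into the $q$- and $s_0$-independent a priori estimate of Lemma \ref{lem:a_priori_bounds} and into the choice \eqref{eq:s0} of $s_0$. The only points that require care are checking that the hypotheses of Lemma \ref{lem:a_priori_bounds} are met (i.e.\ that $f_q$ is indeed of the form \eqref{eq:phi_s0} with $s_0\in(2,\infty)$) and that the inequality between $s_0$ and the $L^\infty$ bound is strict enough — which is precisely why the term $2+(p')^{1/p}$, rather than $1+(p')^{1/p}$, was included in the maximum defining $s_0$.
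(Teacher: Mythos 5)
Your proof is correct and follows exactly the paper's argument: apply the $q$- and $s_0$-independent a priori bound from Lemma \ref{lem:a_priori_bounds}, observe via \eqref{eq:s0} that the bound $1+(p')^{1/p}$ is strictly below $s_0$, and conclude that $f_q(u)=u^{q-1}$ pointwise. The only difference is that you spell out the verification that $f_q$ falls under the hypotheses of Lemma \ref{lem:a_priori_bounds}, which the paper takes as read.
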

\begin{proof}
Let $u\in \mathcal{C}$ be a solution of \eqref{eq:f_tilde_q}. By Lemma \ref{lem:a_priori_bounds} it holds $\|u\|_{L^\infty(B)}\leq 1+(p')^{1/p}<s_0$, so that
\[
{f}_q(u(x))=u(x)^{q-1} \quad\text{for every }x\in B.
\]
Hence $u$ solves \eqref{eq:Pq}.
\end{proof}

\begin{remark}
By direct calculations one can check that ${f}_q$ is of class $C^1$, nonnegative, increasing and satisfies the following properties for every $q>\ell$:
\begin{equation}\label{eq:f/s_increasing}
\mbox{fixed any } s>0, \quad\mbox{the map }t\in(0,\infty)\mapsto\frac{{f}_q(ts)}{t^{p-1}} \textrm{ is increasing},
\end{equation}
\begin{equation}\label{eq:subcritical}
\lim_{s\to\infty} \frac{{f}_q(s)}{s^{\ell-1}}=\frac{q-1}{\ell-1}s_0^{q-\ell}.
\end{equation}
These two properties will play a role in the subsequent sections.
\end{remark}

\begin{remark}\label{rem:ftilde=f}
We notice, for future use, that for every $u\in \mathcal{C}$ satisfying $\|u-1\|_{W^{1,p}(B)}\leq 1$ it holds ${f}_q(u)=u^{q-1}$. Indeed, using Lemma \ref{bounded}, the triangular inequality and relation \eqref{eq:s0},
\[
\|u\|_{L^\infty(B)} \leq C(N,p) \|u\|_{W^{1,p}(B)} \leq C(N,p) (1+|B|^{1/p}) \leq s_0.
\]
\end{remark}

\section{Existence of a mountain pass radial solution}\label{sec:existence_mp}

The aim of this section is to prove the existence of a mountain pass type solution of \eqref{eq:Pq}. In view of Lemma \ref{truncated}, problems \eqref{eq:Pq} and \eqref{eq:f_tilde_q} have the same solutions in $\mathcal{C}$; the advantage of dealing with \eqref{eq:f_tilde_q} is that this problem is subcritical and it can be treated with variational methods. Nonetheless, being forced to work in the cone $\mathcal C$,  we cannot apply directly standard techniques, because $\mathcal{C}$ has empty interior in the $W^{1,p}$-topology.

From now on in the paper, $f_q$ is the function introduced in Lemma \ref{truncated}, extended to zero in $(-\infty,0)$. As already mentioned in the Introduction, denoting
$F_q(u):=\int_0^u f_q(s)ds$, the energy functional associated to problem \eqref{eq:f_tilde_q} is $I_q:W^{1,p}(B)\to\mathbb R$ defined as
\begin{equation}\label{I}
I(u):=\int_B\left(\frac{|\nabla u|^p}{p}+\frac{|u|^p}{p}-{F}_q(u)\right)dx.
\end{equation}
Being $\ell<p^*$ and thanks to the Sobolev embedding, the functional $I_q$ is well defined and of class $C^1$.
We can also associate to \eqref{eq:f_tilde_q} the Nehari-type set
\begin{equation}\label{eq:M_q_def}
\mathcal{N}_q:=\left\{u\in\mathcal C\setminus\{0\}\,: \int_B(|\nabla u|^p+|u|^p)dx=\int_B f_q(u)u\,dx\right\}.
\end{equation}
As problems \eqref{eq:Pq} and \eqref{eq:f_tilde_q} need not be equivalent outside $\mathcal C$, we define $\mathcal N_q$ as the intersection of the cone $\mathcal C$ with the standard Nehari manifold of \eqref{eq:f_tilde_q}; this destroys the structure of manifold for $\mathcal N_q$.
On the other hand, being $\mathcal{N}_q$ a subset of $\mathcal{C}$, it is embedded in $L^\infty(B)$, cf.  \eqref{eq:C_embedded_Linf}.  It is a standard property that Nehari sets are bounded away from the origin; in this setting, an additional feature is that such a bound is independent of $q$.

\begin{lemma}[{\cite[Lemma 5.2]{BF}}]\label{palla_nella_Nehari}
There exists $\sigma>0$ such that
\[
\inf_{q\ge p+1}\inf_{u\in\mathcal N_q}\|u\|_{L^\infty(B)}\ge\sigma.
\]
\end{lemma}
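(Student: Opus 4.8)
The plan is to obtain the bound by testing the Nehari identity against the behaviour of the truncated nonlinearity $f_q$ near $0$, and to exploit the uniformity of the Sobolev embedding constants in the subcritical exponent $\ell\in(p,p^*)$, which does not depend on $q$. First I would fix $u\in\mathcal N_q$ with $q\ge p+1$. From the Nehari identity
\[
\int_B(|\nabla u|^p+|u|^p)\,dx=\int_B f_q(u)u\,dx,
\]
the left-hand side is exactly $\|u\|_{W^{1,p}(B)}^p$. For the right-hand side I would split according to whether $u\le s_0$ or $u> s_0$, using the explicit form of $f_q=\varphi_{q,s_0}$ in \eqref{eq:phi_s0}. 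On $\{u\le s_0\}$ one has $f_q(u)u=u^q\le s_0^{q-\ell}u^\ell$ (since $q>\ell$ and $u\le s_0$), while on $\{u> s_0\}$ the estimate \eqref{eq:subcritical}, or rather the explicit formula, gives $f_q(u)u\le C\,s_0^{q-\ell}u^\ell$ for a constant $C$ depending only on $p,\ell$. Hence
\[
\|u\|_{W^{1,p}(B)}^p\le C\,s_0^{q-\ell}\int_B u^\ell\,dx\le C\,s_0^{q-\ell}\,S^\ell\,\|u\|_{W^{1,p}(B)}^\ell,
\]
where $S$ is the (subcritical, hence $q$-independent) Sobolev constant of $W^{1,p}(B)\hookrightarrow L^\ell(B)$. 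Since $\ell>p$ this forces a lower bound $\|u\|_{W^{1,p}(B)}^{\ell-p}\ge (C s_0^{q-\ell}S^\ell)^{-1}$, i.e.\ a positive lower bound on $\|u\|_{W^{1,p}(B)}$ — but, crucially, one that a priori degenerates as $q\to\infty$ because of the factor $s_0^{q-\ell}$.

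The point of the lemma, however, is a lower bound on $\|u\|_{L^\infty(B)}$ that is uniform in $q$, and here I would use a different, cheaper estimate on the right-hand side of the Nehari identity that keeps $\|u\|_{L^\infty}$ explicit. Namely, bound $f_q(u)u\le f_q(\|u\|_\infty)\|u\|_\infty$ pointwise by monotonicity of $f_q$, and bound the left-hand side below by $\|u\|_p^p$; this gives $\|u\|_{L^p(B)}^p\le |B|\,f_q(\|u\|_\infty)\|u\|_\infty$. That alone is not enough, so instead I would argue as follows: combine the lower bound on $\|u\|_{W^{1,p}(B)}$ with Lemma \ref{bounded} (with $t=p$), which gives $\|u\|_{L^\infty(B)}\ge C(N,p)^{-1}\|u\|_{W^{1,p}(B)}$ — but this still carries the bad $q$-dependence. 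The resolution is to use that on $\mathcal N_q$, by the choice of $s_0$, either $\|u\|_\infty$ is already bounded below by $s_0/2$ (say), in which case we are done with $\sigma$ independent of $q$, or else $\|u\|_\infty< s_0$, in which case $f_q(u)=u^{q-1}$ throughout $B$ and we may run the subcritical argument of the previous paragraph with $\ell$ replaced directly by the exponent $q$... still $q$-dependent.

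Let me instead recall the actual mechanism, which is the one in \cite[Lemma 5.2]{BF}: test the Nehari identity with the $L^\infty$-bound on $u$ from Lemma \ref{bounded} in the form $\int_B u^\ell\le \|u\|_\infty^{\ell-p}\int_B u^p\le \|u\|_\infty^{\ell-p}\|u\|_{W^{1,p}}^p$, so that cancelling $\|u\|_{W^{1,p}}^p$ from the inequality $\|u\|_{W^{1,p}}^p\le C s_0^{q-\ell}\|u\|_\infty^{\ell-p}\|u\|_{W^{1,p}}^p$ is illegitimate; rather one writes $\|u\|_{W^{1,p}}^p=\int_B f_q(u)u\le C s_0^{q-\ell}\int_B u^\ell$ and then, using $u\le\|u\|_\infty$, $\int_B u^\ell\le\|u\|_\infty^{\ell}|B|$, together with the reverse Sobolev-type control $\|u\|_\infty^p\le C(N,p)^p\|u\|_{W^{1,p}}^p=C(N,p)^p\int_B f_q(u)u$; chaining these, $\|u\|_\infty^p\le C(N,p)^p C s_0^{q-\ell}|B|\,\|u\|_\infty^\ell$, hence $\|u\|_\infty^{\ell-p}\ge \big(C(N,p)^p C s_0^{q-\ell}|B|\big)^{-1}$. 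The factor $s_0^{q-\ell}$ is exactly what $s_0$ in \eqref{eq:s0} is chosen large enough to absorb: since $s_0$ is a fixed constant $>2$ and the contradiction we really want is with $\|u\|_\infty\ge s_0$, one observes that if $\|u\|_\infty<s_0$ then $f_q(u)u=u^q\le s_0^{q-\ell}u^\ell$ can be replaced by the sharper $f_q(u)u=u^q\le u^{\ell}\cdot\|u\|_\infty^{q-\ell}\le u^\ell s_0^{q-\ell}$ — no improvement — so the clean statement is obtained by the dichotomy: set $\sigma:=\min\{s_0,\ \text{the } q\text{-uniform constant below}\}$. I expect the main obstacle, and the reason the lemma is quoted rather than reproved, to be precisely this bookkeeping: making the constant genuinely independent of $q$ requires tracking that every appearance of $q$ enters only through the factor $s_0^{q-\ell}$ multiplying an $L^\ell$ norm, and then noticing that in the regime $\|u\|_{L^\infty}<s_0$ one in fact has $f_q(u)u\le u^\ell\|u\|_{L^\infty}^{q-\ell}$ so that the offending power of $q$ is multiplied by $\|u\|_{L^\infty}^{q-\ell}$, which can be reabsorbed on the left after using Lemma \ref{bounded}, yielding $\|u\|_{L^\infty}^{p-\ell}\le C(N,p,\ell,|B|)$ with no $q$ at all; then $\sigma$ is this constant to the power $1/(p-\ell)<0$ inverted, i.e.\ $\sigma=C^{-1/(\ell-p)}$, intersected with the trivial bound on the complementary case. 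I would therefore structure the write-up as: (1) dichotomy on $\|u\|_{L^\infty}$ versus $s_0$; (2) in the small case, chain Nehari identity $+$ explicit $f_q$ $+$ Lemma \ref{bounded}; (3) read off $\sigma$; emphasising in step (2) that the exponent $\ell$ and all constants are fixed once $p,N,\ell$ are.
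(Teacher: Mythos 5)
The core mechanism you eventually land on -- dichotomize on $\|u\|_{L^\infty(B)}$ versus $s_0$, and in the small regime exploit that $f_q(u)=u^{q-1}$ pointwise -- is the right one, and it works. But two things need fixing.

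First, the final inequality you claim, $\|u\|_{L^\infty}^{p-\ell}\le C(N,p,\ell,|B|)$ ``with no $q$ at all'', is not what your own chain produces. Chaining Lemma~\ref{bounded}, the Nehari identity, and $u^q\le \|u\|_\infty^{q-\ell}u^\ell\le\|u\|_\infty^{q}$ gives $\|u\|_\infty^p\le C(N,p)^p|B|\,\|u\|_\infty^q$, i.e.\ $\|u\|_\infty^{q-p}\ge (C(N,p)^p|B|)^{-1}$: the exponent is $q-p$, not $\ell-p$, and $q$ has \emph{not} disappeared. The bound is nevertheless uniform in $q\ge p+1$, but this requires an extra observation you don't make: testing Lemma~\ref{bounded} on a constant function shows $C(N,p)|B|^{1/p}\ge1$, so $(C(N,p)^p|B|)^{-1/(q-p)}$ is nondecreasing in $q$ and hence bounded below by $(C(N,p)^p|B|)^{-1}$ for all $q\ge p+1$.

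Second, the argument is more roundabout than it needs to be, and the first two-thirds of the write-up are acknowledged false starts that should be cut. In the regime $\|u\|_\infty<s_0$ the Nehari identity plus the pointwise bound $u^q=u^{q-p}u^p\le\|u\|_\infty^{q-p}u^p$ give directly
\[
\|u\|_{W^{1,p}(B)}^p=\int_B u^q\,dx\le\|u\|_\infty^{q-p}\int_B u^p\,dx\le\|u\|_\infty^{q-p}\|u\|_{W^{1,p}(B)}^p,
\]
and dividing by $\|u\|_{W^{1,p}(B)}^p>0$ (legitimate since $u\in\mathcal C\setminus\{0\}$) yields $\|u\|_\infty^{q-p}\ge1$, hence $\|u\|_\infty\ge1$ because $q-p\ge1>0$. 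No Sobolev constant, no $\ell$, and Lemma~\ref{bounded} is not needed at all. Combined with the trivial lower bound $\|u\|_\infty\ge s_0>2$ in the complementary case, this gives $\sigma=1$. The paper only cites \cite[Lemma 5.2]{BF} without reproducing the proof, so I cannot compare line by line, but this streamlined version is the natural argument and is almost certainly what is meant.
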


The functional $I_q$ satisfies the mountain pass geometry and the Palais-Smale condition:

\begin{lemma}\label{lem:geometry}
Let $\tau\in (0,\min\{\sigma, 1\})$, with $\sigma$ given in Lemma \ref{palla_nella_Nehari},
\begin{itemize}
\item[(i)] there exists $\alpha_q>0$ such that $I_q(u)\ge \alpha_q$ for every $u \in \mathcal C$ with $\|u\|_{L^\infty(B)}=\tau$;
\item[(ii)] there exists $k>\tau$ such that  $I_q(k\cdot 1)<0$.
\end{itemize}
\end{lemma}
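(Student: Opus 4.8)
The plan is to verify the two pieces of the mountain pass geometry separately, both relying on the Sobolev‐subcriticality of $f_q$ (equivalently $F_q$) together with Lemma \ref{palla_nella_Nehari}. For part (i), I would first record growth estimates on $F_q$: since $f_q(s)=s^{q-1}$ for $s\le s_0$ and $f_q$ has at most $\ell$-growth at infinity by \eqref{eq:subcritical}, there is a constant $c_q>0$ (depending on $q$, $s_0$, $\ell$) with $F_q(s)\le c_q(s^{q}+s^{\ell})$ for $s\ge0$, and in fact for $s$ small one has the sharper bound $F_q(s)=s^q/q$. Now for $u\in\mathcal C$ with $\|u\|_{L^\infty(B)}=\tau<\min\{\sigma,1\}$, since $\tau<s_0$ we get pointwise $F_q(u)=u^q/q\le \tau^{q-p}u^p/q$ on $B$. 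Hence
\[
I_q(u)=\int_B\Big(\frac{|\nabla u|^p}{p}+\frac{|u|^p}{p}-F_q(u)\Big)dx\ge \frac1p\|u\|_{W^{1,p}(B)}^p-\frac{\tau^{q-p}}{q}\|u\|_{L^p(B)}^p\ge\Big(\frac1p-\frac{\tau^{q-p}}{q}|B|\Big)\|u\|_{W^{1,p}(B)}^p,
\]
wait — more carefully, $\|u\|_{L^p}^p\le|B|\,\|u\|_{L^\infty}^p\le|B|\,\tau^p$, so actually one simply bounds $\int_B F_q(u)\,dx\le \tau^{q}|B|/q$ and $I_q(u)\ge\frac1p\|u\|_{W^{1,p}(B)}^p-\frac{\tau^q|B|}{q}$. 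The remaining point is that $\|u\|_{W^{1,p}(B)}$ is itself bounded below by a positive constant when $\|u\|_{L^\infty(B)}=\tau$: this is immediate from Lemma \ref{bounded}, which gives $\tau=\|u\|_{L^\infty(B)}\le C(N,p)\|u\|_{W^{1,p}(B)}$, hence $\|u\|_{W^{1,p}(B)}\ge \tau/C(N,p)$. Combining, $I_q(u)\ge \frac{\tau^p}{p\,C(N,p)^p}-\frac{\tau^q|B|}{q}=:\alpha_q$, and since $\tau<1$ and $q>p$, choosing $\tau$ small enough (or simply noting $\tau<\min\{\sigma,1\}$ and the gap $q>p$) makes $\alpha_q>0$; alternatively one keeps $\alpha_q$ as written and observes it is strictly positive because $\tau^p/(p C^p)$ dominates $\tau^q|B|/q$ for $\tau$ below an explicit threshold, which one may shrink $\tau$ to achieve. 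I would state the lemma for such a (possibly smaller) $\tau$, or absorb the smallness into the hypothesis on $\tau$.

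For part (ii), the point is that $F_q$ is superquadratic, indeed super-$p$-homogeneous, at infinity: from \eqref{eq:f/s_increasing} with, say, $s=1$, the map $t\mapsto f_q(t)/t^{p-1}$ is increasing and, by \eqref{eq:subcritical}, tends to $+\infty$ like $t^{\ell-p}$ as $t\to\infty$. Integrating, $F_q(t)/t^p\to\infty$ as $t\to\infty$; more concretely, for $t$ large, $f_q(t)\ge \frac{q-1}{2(\ell-1)}s_0^{q-\ell}t^{\ell-1}$, so $F_q(t)\ge c\,t^{\ell}$ for $t\ge s_0$ with a positive constant $c=c(q,\ell,s_0)$. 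Then for the constant function $k\cdot 1\in\mathcal C$ (which lies in $\mathcal C$ by property (i) of the cone) with $k>s_0$,
\[
I_q(k\cdot 1)=|B|\Big(\frac{k^p}{p}-F_q(k)\Big)\le |B|\Big(\frac{k^p}{p}-c\,k^{\ell}\Big),
\]
and since $\ell>p$ the right‐hand side is negative for $k$ large; fix such a $k$, which also satisfies $k>\tau$ since $\tau<1<s_0<k$. This gives (ii).

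The steps in order: (1) establish the pointwise/integral bounds $F_q(s)=s^q/q$ for $s\le s_0$ and $F_q(s)\ge c\,s^\ell$ for $s\ge s_0$, directly from the definition \eqref{eq:phi_s0} of $f_q$ and the estimates used in Lemma \ref{lem:a_priori_bounds}; (2) prove (i) by combining the small-$s$ bound on $F_q$ with the lower bound on $\|u\|_{W^{1,p}(B)}$ coming from Lemma \ref{bounded} (taking $\tau$ small if necessary); (3) prove (ii) by evaluating $I_q$ on $k\cdot 1$ and using $\ell>p$. I do not expect a genuine obstacle here — this is the standard mountain pass geometry, and the cone structure causes no trouble because both test configurations ($u$ with $\|u\|_{L^\infty}=\tau$, and $k\cdot 1$) live in $\mathcal C$ and because $f_q$ is already subcritical by construction. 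The one place requiring a little care is ensuring $\alpha_q>0$: one must either invoke the gap $q>p$ to beat the $\tau^q$ term against the $\tau^p$ term, or simply assume $\tau$ is small enough relative to $C(N,p)$, $|B|$, $p$; I would phrase the statement so that $\tau$ may be taken as small as needed. A secondary routine check is that $F_q$ indeed has at least $\ell$-growth and not merely grows like some power between $p$ and $\ell$ — but this is exactly \eqref{eq:subcritical}, and the explicit lower bound on $f_q$ for $t>s_0$ from \eqref{eq:phi_s0} makes it transparent.
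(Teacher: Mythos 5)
Your argument for (ii) is essentially the paper's own proof: evaluate $I_q(k\cdot 1)$, use the lower bound $f_q(s)\gtrsim s^{\ell-1}$ for large $s$ coming from \eqref{eq:subcritical}, and conclude from $\ell>p$. For (i) the paper simply cites \cite[Lemma 3.9]{BF}, so your direct argument is a reasonable substitute; it is correct in substance, but the version you settle on does not prove the statement as written. The lemma asserts the bound for \emph{every} $\tau\in(0,\min\{\sigma,1\})$, whereas your final estimate $I_q(u)\ge \tau^p/(pC(N,p)^p)-\tau^q|B|/q$ is positive only after imposing an additional smallness condition on $\tau$ (depending on $C(N,p)$, $|B|$, $p$, $q$), which you then propose to absorb into the hypothesis. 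This weakening is unnecessary: the sharper pointwise bound you wrote first already closes the argument for all admissible $\tau$. Indeed, since $0\le u\le\tau<1<s_0$, one has $F_q(u)=u^q/q\le \tau^{q-p}u^p/q$ pointwise, hence $\int_B F_q(u)\,dx\le \frac{\tau^{q-p}}{q}\|u\|_{L^p(B)}^p\le\frac{\tau^{q-p}}{q}\|u\|_{W^{1,p}(B)}^p$ (no factor $|B|$ appears here), and therefore
\[
I_q(u)\ \ge\ \Bigl(\tfrac1p-\tfrac{\tau^{q-p}}{q}\Bigr)\|u\|_{W^{1,p}(B)}^p\ \ge\ \Bigl(\tfrac1p-\tfrac1q\Bigr)\frac{\tau^p}{C(N,p)^p}=:\alpha_q>0,
\]
using $\tau^{q-p}<1$, $q>p$, and the lower bound $\|u\|_{W^{1,p}(B)}\ge\tau/C(N,p)$ from Lemma \ref{bounded}. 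With this correction your proof of (i) holds for every $\tau\in(0,\min\{\sigma,1\})$ and no restatement of the lemma is needed.
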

\begin{proof}
The proof of part (i) is contained in {\cite[Lemma 3.9]{BF}}. We prove now part (ii).
By \eqref{eq:subcritical}, there exists $\tilde{s}>s_0$ such that $f_q(s)>\frac{q-1}{2(\ell-1)}s_0^{q-\ell} s^{\ell-1}$ for every $s>\tilde{s}$. Hence, for every $k>\tilde{s}$, we get
\[
\begin{aligned}
I_q(k\cdot 1)&=|B|\left(\frac{k^p}{p}-\int_0^{\tilde{s}} f_q(s)ds-\int_{\tilde{s}}^k f_q(s)ds\right)\\
&\le |B|\left(\frac{k^p}{p}-\tilde{s}\min_{[0,\tilde{s}]} f_q- \frac{d}{2}\frac{k^{\ell}}{\ell}+\frac{d}{2}\frac{\tilde{s}^\ell}{\ell}\right).
\end{aligned}
\]
So, being $\ell>p$, $\lim_{k\to +\infty}I_q(k\cdot 1)=-\infty$, which concludes the proof.
\end{proof}

\begin{lemma}[{\cite[Lemma A.4]{BF}}]
$I_q$ satisfies the Palais-Smale condition, i.e. every sequence $(u_n)\subset W^{1,p}(B)$ such that $(I_q(u_n))$ is bounded and $I_q'(u_n)\to0$ in $(W^{1,p}(B))'$ admits a convergent subsequence.
\end{lemma}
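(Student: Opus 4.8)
The plan is to follow the classical scheme for subcritical functionals involving the $p$-Laplacian, in the spirit of \cite{BF}. Let $(u_n)\subset W^{1,p}(B)$ be a Palais--Smale sequence at some level $c$, so that $I_q(u_n)\to c$ and $I_q'(u_n)\to 0$ in $(W^{1,p}(B))'$. The first step is to show that $(u_n)$ is bounded in $W^{1,p}(B)$. The key is that the truncated nonlinearity satisfies an Ambrosetti--Rabinowitz type condition: since, by \eqref{eq:subcritical}, $f_q(s)\sim \frac{q-1}{\ell-1}s_0^{q-\ell}s^{\ell-1}$ and hence $F_q(s)\sim \frac{q-1}{\ell(\ell-1)}s_0^{q-\ell}s^{\ell}$ as $s\to\infty$, one can fix $\theta\in(p,\ell)$ and $\bar s>s_0$ with $0<\theta F_q(s)\le f_q(s)s$ for all $s\ge\bar s$. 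Evaluating
\[
I_q(u_n)-\tfrac1\theta\langle I_q'(u_n),u_n\rangle=\Big(\tfrac1p-\tfrac1\theta\Big)\|u_n\|_{W^{1,p}(B)}^p+\int_B\Big(\tfrac1\theta f_q(u_n)u_n-F_q(u_n)\Big)dx,
\]
the integrand is nonnegative on $\{u_n\ge\bar s\}$, vanishes on $\{u_n\le 0\}$ (recall $f_q$ is extended by zero), and is bounded on $\{0\le u_n\le \bar s\}$ since $f_q,F_q$ are continuous there; hence the integral is bounded below by a constant independent of $n$. As the left-hand side equals $c+o(1)+o(1)\|u_n\|_{W^{1,p}(B)}$ and $p>1$, boundedness of $(u_n)$ follows.

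Next I would pass to the limit. Up to a subsequence, $u_n\rightharpoonup u$ in $W^{1,p}(B)$, and since $B$ is bounded with Lipschitz boundary and $\ell<p^*$, the embedding $W^{1,p}(B)\hookrightarrow L^\ell(B)\cap L^p(B)$ is compact, so $u_n\to u$ strongly in $L^\ell(B)$ and in $L^p(B)$. Because $|f_q(s)|\le C(1+|s|^{\ell-1})$, the sequence $f_q(u_n)$ is bounded in $L^{\ell'}(B)$ with $\ell'=\ell/(\ell-1)$, so $\int_B f_q(u_n)(u_n-u)\,dx\to 0$; similarly $|u_n|^{p-2}u_n$ is bounded in $L^{p'}(B)$, whence $\int_B|u_n|^{p-2}u_n(u_n-u)\,dx\to 0$. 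Since $\langle I_q'(u_n),u_n-u\rangle\to 0$ (the test function being bounded in $W^{1,p}(B)$), we deduce
\[
\int_B|\nabla u_n|^{p-2}\nabla u_n\cdot\nabla(u_n-u)\,dx\to 0,
\]
and subtracting the analogous quantity with $\nabla u$ in place of $\nabla u_n$, which tends to $0$ by weak convergence, we obtain
\[
\int_B\big(|\nabla u_n|^{p-2}\nabla u_n-|\nabla u|^{p-2}\nabla u\big)\cdot\nabla(u_n-u)\,dx\to 0.
\]

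The last step is to upgrade this to strong convergence of the gradients. For $1<p<2$ I would use the elementary vector inequality
\[
|\xi-\eta|^p\le C_p\big[(|\xi|^{p-2}\xi-|\eta|^{p-2}\eta)\cdot(\xi-\eta)\big]^{p/2}(|\xi|+|\eta|)^{(2-p)p/2},\qquad \xi,\eta\in\R^N,
\]
combined with Hölder's inequality with exponents $2/p$ and $2/(2-p)$: since $\int_B(|\nabla u_n|+|\nabla u|)^p\,dx$ is bounded by the first step, the convergence just displayed forces $\nabla u_n\to\nabla u$ in $L^p(B)$, and together with $u_n\to u$ in $L^p(B)$ this yields $u_n\to u$ in $W^{1,p}(B)$. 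The main obstacle is precisely this last step: because $I_q$ fails to be $C^2$ and the principal part is singular for $p<2$, one cannot invoke uniform monotonicity of the $p$-Laplacian, and must instead exploit the quantitative inequality above, using the boundedness of the Palais--Smale sequence to absorb the $(2-p)$-power of the gradients. The first two steps are routine once the Ambrosetti--Rabinowitz condition is read off from \eqref{eq:subcritical} and the compactness of the Sobolev embedding on $B$ is invoked.
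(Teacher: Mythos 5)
Your argument is correct and complete. The paper itself does not prove this lemma: it is imported verbatim from \cite[Lemma A.4]{BF}, so there is no in-text proof to match against. Your three steps are all sound: the Ambrosetti--Rabinowitz inequality $\theta F_q(s)\le f_q(s)s$ for $\theta\in(p,\ell)$ and $s$ large does follow from the explicit form \eqref{eq:phi_s0} of the truncation (the leading term of $f_q(s)s-\theta F_q(s)$ has coefficient $\frac{q-1}{\ell-1}s_0^{q-\ell}(1-\theta/\ell)>0$), the handling of the sets $\{u_n\le 0\}$ and $\{0\le u_n\le\bar s\}$ is right, the passage to $\int_B(|\nabla u_n|^{p-2}\nabla u_n-|\nabla u|^{p-2}\nabla u)\cdot\nabla(u_n-u)\,dx\to0$ uses only the compact embeddings $W^{1,p}(B)\hookrightarrow L^\ell(B)\cap L^p(B)$ with $\ell<p^*$, and the Simon-type inequality together with H\"older with exponents $2/p$ and $2/(2-p)$ is exactly the right tool to conclude for $1<p<2$, since the bracket $(|\xi|^{p-2}\xi-|\eta|^{p-2}\eta)\cdot(\xi-\eta)$ is pointwise nonnegative and its integral is what you have shown to vanish.

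It is worth pointing out that the route in the cited reference, as reflected by the auxiliary tools this paper does import (Proposition \ref{Ttildecompact1} and Lemma \ref{lem:propJ3}), is organized differently in its second half: after establishing boundedness of the Palais--Smale sequence (again via the Ambrosetti--Rabinowitz structure), one writes $u_n=\tilde T(u_n)+(u_n-\tilde T(u_n))$, uses the first inequality in \eqref{J1} together with $\|I_q'(u_n)\|_*\to0$ and the boundedness of $\|u_n\|_{W^{1,p}(B)}+\|\tilde T(u_n)\|_{W^{1,p}(B)}$ (note $p-2<0$, so this upper bound gives a lower bound on the weight) to deduce $\|u_n-\tilde T(u_n)\|_{W^{1,p}(B)}\to0$, and then invokes the compactness of $\tilde T$ to extract a convergent subsequence. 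That operator-theoretic argument buys a statement that is reused later (the fixed points of $\tilde T$ are exactly the critical points, and the quantitative inequalities \eqref{J1} drive the deformation lemma), whereas your argument is more self-contained and elementary, relying only on the standard $(S_+)$-type property of the $p$-Laplacian via the vector inequality. Both are valid; yours does not need the solution operator $T$ at all.
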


We are now ready to introduce the mountain pass level
\begin{equation}\label{minmax}
c_q=\inf_{\gamma\in\Gamma_q}\max_{t\in[0,1]} I_q(\gamma(t)),
\end{equation}
where
$$
\Gamma_q:=\left\{ \gamma\in C([0,1];\mathcal C)\ :\  \gamma(0) \in U_{q,-},\: \gamma(1) \in U_{q,+}\right\},
$$
and
\begin{equation}
\begin{aligned}
U_{q,-} &= \left\{u \in \mathcal C \::\: I_q(u)<\frac{\alpha_q}{2},\:
\|u\|_{L^\infty(B)} < \tau\right\},\\
U_{q,+}&= \left\{u \in \mathcal C \, :\, I_q(u)< 0,\, \|u\|_{L^\infty(B)}>\tau\right\},
\end{aligned}
\end{equation}
with $\tau$, and $\alpha_q$ as in Lemma \ref{lem:geometry}.

\begin{theorem}\label{mountainpass1}
The value $c_q$ defined in \eqref{minmax} is finite and there exists a critical point $u_q\in\mathcal C$ of $I_q$ with $I_q(u_q)=c_q$.
\end{theorem}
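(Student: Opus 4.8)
The plan is to establish first that $c_q$ is finite, and then, arguing by contradiction, to produce a critical point at level $c_q$ by a deformation argument compatible with the cone $\mathcal{C}$.

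\emph{Finiteness of $c_q$.} First I would check that $\Gamma_q\neq\emptyset$ by exhibiting the affine path $\gamma_0(t):=(tk)\cdot 1$, $t\in[0,1]$, with $k>\tau$ as in Lemma~\ref{lem:geometry}(ii): it takes values in $\mathcal{C}$ because $\mathcal{C}$ is a cone; since $I_q(0)=0<\alpha_q/2$ and $\|0\|_{L^\infty(B)}=0<\tau$ we have $\gamma_0(0)=0\in U_{q,-}$, while $I_q(\gamma_0(1))=I_q(k\cdot 1)<0$ and $\|\gamma_0(1)\|_{L^\infty(B)}=k>\tau$ give $\gamma_0(1)\in U_{q,+}$; hence $c_q\le\max_{[0,1]}I_q\circ\gamma_0<+\infty$. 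For the lower bound I would use that, by Lemma~\ref{bounded}, the inclusion of $(\mathcal{C},\|\cdot\|_{W^{1,p}(B)})$ into $L^\infty(B)$ is Lipschitz continuous, so for every $\gamma\in\Gamma_q$ the function $t\mapsto\|\gamma(t)\|_{L^\infty(B)}$ is continuous; it passes from a value $<\tau$ at $t=0$ to a value $>\tau$ at $t=1$, hence equals $\tau$ at some $\bar t$, where $I_q(\gamma(\bar t))\ge\alpha_q$ by Lemma~\ref{lem:geometry}(i). Taking the infimum over $\gamma$, $c_q\ge\alpha_q>0$; in particular $c_q$ is bounded away from the values of $I_q$ on $U_{q,-}\cup U_{q,+}$, where $I_q<\alpha_q/2$ or $I_q<0$.

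\emph{Reduction to a cone-preserving deformation.} Suppose, by contradiction, that $I_q$ has no critical point $u\in\mathcal{C}$ with $I_q(u)=c_q$; by Lemma~\ref{truncated} such a point would be a solution of \eqref{eq:Pq}. Combined with the Palais--Smale condition, this yields in the usual way (shrinking the strip so as to avoid all critical values, which is possible by compactness of the critical set in bounded energy ranges) constants $\varepsilon\in(0,\alpha_q/4)$ and $\delta>0$ such that $\|I_q'(u)\|_{(W^{1,p}(B))'}\ge\delta$ whenever $u\in\mathcal{C}$ with $|I_q(u)-c_q|\le 2\varepsilon$. The decisive step is to construct a deformation $\eta\in C([0,1]\times\mathcal{C};\mathcal{C})$ --- crucially, with values \emph{in} $\mathcal{C}$ --- such that $\eta(0,\cdot)=\mathrm{id}$, $s\mapsto I_q(\eta(s,u))$ is nonincreasing, $\eta(s,u)=u$ whenever $I_q(u)\notin[c_q-2\varepsilon,c_q+2\varepsilon]$, and $\eta(1,\{I_q\le c_q+\varepsilon\}\cap\mathcal{C})\subset\{I_q\le c_q-\varepsilon\}$. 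Granting this, I would pick $\gamma\in\Gamma_q$ with $\max_{[0,1]}I_q\circ\gamma<c_q+\varepsilon$; the endpoints $\gamma(0),\gamma(1)$ have $I_q$-values $<\alpha_q/2$ and $<0$ respectively, both $<c_q-2\varepsilon$ since $\varepsilon<\alpha_q/4\le c_q/4$, so they are left fixed by $\eta(1,\cdot)$ and remain in $U_{q,-}$, $U_{q,+}$; hence $\eta(1,\gamma(\cdot))\in\Gamma_q$, yet $\max_{[0,1]}I_q\circ\eta(1,\gamma(\cdot))\le c_q-\varepsilon$, contradicting the definition of $c_q$. It remains to build $\eta$.

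\emph{Construction of $\eta$ inside $\mathcal{C}$ --- the main obstacle.} The plain negative pseudo-gradient flow of $I_q$ need not leave $\mathcal{C}$ invariant, and $\mathcal{C}$ has empty interior in $W^{1,p}(B)$, so the classical deformation lemma does not apply directly; a refined version is needed. The plan is to build a locally Lipschitz vector field $V$ on a neighbourhood in $\mathcal{C}$ of the strip $\{|I_q-c_q|\le 2\varepsilon\}$ such that, at every $u$, one has $V(u)$ in the tangent cone $T_u\mathcal{C}=\overline{\bigcup_{\lambda>0}\lambda(\mathcal{C}-u)}$ and $\langle I_q'(u),V(u)\rangle\le-\tfrac{\delta^2}{4}$, extended by $0$ off a slightly larger strip. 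Since $\mathcal{C}$ is closed and convex, Nagumo's (sub)tangentiality criterion then guarantees that the flow generated by $V$ keeps $\mathcal{C}$ invariant, and $\eta$ is obtained by following this flow up to time $8\varepsilon/\delta^2$. Locally near a point $u_0$ a candidate for $V$ is the metric projection onto the closed convex cone $T_{u_0}\mathcal{C}$ of a duality representative of $-I_q'(u_0)$ in $W^{1,p}(B)$, which one checks to be a strict descent direction, and these local choices are glued by a partition of unity in the usual pseudo-gradient fashion. For this to succeed one must verify that a point $u\in\mathcal{C}$ admitting no descent direction in $T_u\mathcal{C}$ --- equivalently, $\langle I_q'(u),v-u\rangle\ge 0$ for all $v\in\mathcal{C}$ --- is in fact a free critical point of $I_q$, hence (by Lemma~\ref{truncated}) a solution of \eqref{eq:Pq} in $\mathcal{C}$, which is excluded by the standing assumption and thus rules out such points on the strip. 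This last fact is where the geometry of $\mathcal{C}$ is essential: the positivity constraint is inactive by the strong maximum principle for the operator $u\mapsto -\Delta_p u+|u|^{p-2}u$ (note $u\not\equiv0$ on the strip, since $c_q>0$), while the radial monotonicity constraint is shown to be inactive by testing the variational inequality with perturbations preserving radial monotonicity and analysing the resulting radial ODE for $u$, so that the multiplier associated to the monotonicity constraint vanishes. Proving this inactivity of the constraints --- i.e.\ that constrained equilibria are genuine solutions --- is the core difficulty; once it is available, the three steps combine to give $u_q\in\mathcal{C}$ with $I_q(u_q)=c_q$ and $I_q'(u_q)=0$, which by Lemma~\ref{truncated} solves \eqref{eq:Pq}.
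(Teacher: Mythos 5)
Your overall skeleton coincides with the paper's: finiteness of $c_q$ via the ray $t\mapsto tk$ and the intermediate-value argument on $\|\gamma(t)\|_{L^\infty(B)}$ giving $c_q\ge\alpha_q$, then a contradiction argument in which a cone-preserving deformation pushes an almost-optimal path below $c_q$ while fixing its endpoints. Up to that point your write-up is fine and matches the proof of Theorem \ref{mountainpass1}.

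The genuine gap is in your construction of the deformation $\eta$. You propose a pseudo-gradient field with values in the tangent cones $T_u\mathcal C$, made invariant by Nagumo's criterion, and you correctly identify that this forces you to prove that any $u\in\mathcal C$ with $\langle I_q'(u),v-u\rangle\ge 0$ for all $v\in\mathcal C$ is a \emph{free} critical point. But you do not prove this, and it is not a routine fact: since $\mathcal C$ is a cone, such a $u$ satisfies $I_q'(u)[v]\ge 0$ only for the restricted class of nonnegative, radial, nondecreasing test functions, which is far too small to conclude $-\Delta_p u+u^{p-1}=f_q(u)$; the sketch ``test with monotonicity-preserving perturbations and analyse the radial ODE'' is precisely the hard step, not a remark. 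Moreover, even granting it, you would still need a uniform lower bound for the constrained slope on the strip (a Palais--Smale condition relative to the tangent cones), and your ``duality representative of $-I_q'(u)$'' is problematic in the non-Hilbertian space $W^{1,p}$ with $p<2$. The paper's deformation (Lemma \ref{deformation1}) is built to avoid exactly this issue: the descent direction is $u-K(u)$, where $K$ is a locally Lipschitz approximation (Lemma \ref{KTI1}) of the cone-preserving operator $\tilde T(u)=T(f_q(u))$ (Lemma \ref{cononelcono1}); the flow $\frac{d}{dt}\eta=-\chi(I_q(\eta))\,(\eta-K(\eta))/\|\eta-K(\eta)\|$ stays in the closed convex cone because it always points from $\eta$ towards an element of $\mathcal C$, the inequalities of Proposition \ref{Ttildecompact1} show that zeros of $u-\tilde T(u)$ are exactly the free critical points and give the quantitative decay of $I_q$ along the flow, and Lemmas \ref{lem:propJ3} and \ref{conseqPS1} supply the uniform bounds needed to reach level $c_q-\varepsilon$ in finite time. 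Without this (or an actual proof that the monotonicity constraint is inactive), your argument does not close.
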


\bigskip

The proof of this theorem requires several tools, which we introduce in the following.

Given the operator $T:(W^{1,p}(B))'\to W^{1,p}(B)$ such that $T(w)=v$, where $v$ is the unique solution to
\begin{equation}\label{T}
\begin{cases}-\Delta_p v+|v|^{p-2}v=w\quad&\mbox{in }B,\\
\partial_\nu v=0&\mbox{on }\partial B,
\end{cases}
\end{equation}
we introduce $\tilde T:W^{1,p}(B)\to W^{1,p}(B)$ defined by
\begin{equation}\label{eq:tildeT_def}
\tilde T(u)=T(f_q(u)).
\end{equation}
Being $\ell<p^*$, $u\in W^{1,p}(B)$ implies $u\in L^\ell(B)$. Hence, by \eqref{eq:phi_s0}, $f_q(u)\in L^{\ell'}(B)\subset (W^{1,p}(B))'$ and $\tilde T$ is well defined; moreover, $\tilde T$ preserves the cone $\mathcal C$, that is a crucial property for our technique.

\begin{lemma}[{\cite[Lemma 3.4]{BF}}]\label{cononelcono1}
The operator $\tilde T$ defined in \eqref{eq:tildeT_def} satisfies
$\tilde T(\mathcal C)\subseteq \mathcal C$.
\end{lemma}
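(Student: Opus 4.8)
The plan is to fix $u\in\mathcal C$ and show that $v:=\tilde T(u)=T(f_q(u))$ lies in $\mathcal C$, i.e. that $v$ is radial, nonnegative, and radially nondecreasing. First I would verify radial symmetry: since $u$ is radial, $f_q(u)$ is radial (as $f_q$ is a function of $u$ alone), and the operator $T$ associated to the rotation-invariant problem \eqref{T} maps radial data to radial solutions; this follows from uniqueness of the solution to \eqref{T} together with the fact that precomposing $v$ with any rotation of $B$ yields another solution with the same (radial) datum. Hence $v\in W^{1,p}_{\mathrm{rad}}(B)$ and we may write $v=v(r)$, $r=|x|$.

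Next I would prove $v\ge 0$. Since $f_q\ge 0$ and $u\ge 0$, the datum $w:=f_q(u)\ge 0$. Testing \eqref{T} with the negative part $v^-:=\max\{-v,0\}\in W^{1,p}(B)$ gives
\[
\int_B\bigl(|\nabla v|^{p-2}\nabla v\cdot\nabla v^- + |v|^{p-2}v\,v^-\bigr)\,dx = \int_B w\,v^-\,dx \ge 0,
\]
and the left-hand side equals $-\int_B(|\nabla v^-|^p+|v^-|^p)\,dx\le 0$, forcing $v^-\equiv 0$, so $v\ge 0$ in $B$.

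For the monotonicity, I would pass to the radial ODE form of \eqref{T}. Writing the equation in radial coordinates and integrating on $(0,r)$ with the Neumann condition at the origin (which gives $\lim_{r\to0^+} r^{N-1}|v'(r)|^{p-2}v'(r)=0$), one obtains
\[
r^{N-1}|v'(r)|^{p-2}v'(r) = \int_0^r \bigl(w(s)-|v(s)|^{p-2}v(s)\bigr)s^{N-1}\,ds.
\]
Since $t\mapsto |t|^{p-2}t$ is increasing, to conclude $v'(r)\ge0$ for a.e.\ $r$ it suffices to show the right-hand side is nonnegative for every $r$; the natural strategy is a comparison/sub-supersolution argument or a maximum-principle argument showing $v\le \|w\|_{L^\infty}^{1/(p-1)}$ pointwise and then exploiting that $w$ is nondecreasing. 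Concretely, I expect the argument to run: suppose by contradiction that $v$ is strictly decreasing on some interval; then on that interval the integrand $w(s)-|v(s)|^{p-2}v(s)$ must be negative somewhere, but $w$ is nondecreasing while $v$ is (on that interval) decreasing, so $|v|^{p-2}v$ is also decreasing there — one leverages this together with the value at the previous turning point or at $r=1$ (the Neumann condition $v'(1)=0$ gives $\int_0^1(w-|v|^{p-2}v)s^{N-1}ds=0$) to reach a contradiction. Since this is exactly the content of \cite[Lemma 3.4]{BF}, I would cite that proof for the details.

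The main obstacle is precisely this last monotonicity step: nonnegativity and radiality are soft, but showing $v'\ge 0$ requires a genuine comparison argument in the radial ODE, complicated by the degeneracy/singularity of the $p$-Laplacian at points where $v'=0$. The cleanest route is the ODE integral identity above combined with the boundary identity at $r=1$; alternatively, one can use that $\mathcal C$ is a closed convex cone and that $\tilde T$ is the gradient-type solution operator, approximating $u$ by smooth data and passing to the limit. Either way, the delicate point is controlling the sign of $\int_0^r(w-|v|^{p-2}v)s^{N-1}\,ds$ uniformly in $r$, which is where the assumption that the datum $w=f_q(u)$ is itself nondecreasing is essential.
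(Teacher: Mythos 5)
The paper does not reprove this lemma (it simply quotes \cite[Lemma 3.4]{BF}), so your proposal is judged on whether it could stand alone. The radiality and nonnegativity parts are correct and routine. The monotonicity part, which you yourself identify as the crux, contains a genuine gap and a sign error that would derail any attempt to complete it as written. Writing the equation in radial form as $\bigl(r^{N-1}|v'|^{p-2}v'\bigr)'=r^{N-1}\bigl(|v|^{p-2}v-w\bigr)$ and integrating on $(0,r)$ gives
\[
r^{N-1}|v'(r)|^{p-2}v'(r)=\int_0^r\bigl(|v(s)|^{p-2}v(s)-w(s)\bigr)s^{N-1}\,ds,
\]
i.e.\ the opposite sign to your identity (yours would be correct only with the integral taken over $(r,1)$, using $v'(1)=0$). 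Consequently your stated goal, ``show that $\int_0^r(w-|v|^{p-2}v)s^{N-1}ds\ge0$,'' is the wrong inequality: for nonconstant nondecreasing $w$ one expects $v$ to dominate $w$ near the origin and be dominated near $r=1$, so that partial integral is typically $\le0$. Moreover, even granting the identity, the contradiction argument is only gestured at and explicitly deferred to the citation, so the lemma is not actually proved.

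For completeness, here is how the step closes (and why the sign matters). Set $g(r):=r^{N-1}|v'(r)|^{p-2}v'(r)$, so $g(0)=g(1)=0$ and $g'=r^{N-1}h$ with $h:=|v|^{p-2}v-w$. If $v'(r_0)<0$ for some $r_0$, let $(a,b)\ni r_0$ be the maximal interval on which $g<0$; then $g(a)=g(b)=0$. On $(a,b)$ the function $v$ is strictly decreasing, hence $|v|^{p-2}v$ is strictly decreasing, while $w=f_q(u)$ is nondecreasing (here one uses that $f_q$ is increasing and $u\in\mathcal C$), so $h$ is strictly decreasing on $(a,b)$. Since $\int_a^b s^{N-1}h(s)\,ds=g(b)-g(a)=0$, $h$ must be positive near $a$, so $g'>0$ there and $g>g(a)=0$ just to the right of $a$, contradicting $g<0$ on $(a,b)$. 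With your sign convention $h$ would instead be increasing on $(a,b)$, negative near $a$, and the same reasoning produces no contradiction; this confirms the sign error is not cosmetic.
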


\begin{proposition}[{\cite[Proposition A.3]{BF}}]\label{Ttildecompact1}
The operator $\tilde T$ is compact. Furthermore, there exist two positive constants $a,\,b$ such that for all $u\in W^{1,p}(B)$ the following properties hold
\begin{equation}\label{J1}
\begin{aligned}
&I_q'(u)[u-\tilde T(u)]\ge a\|u-\tilde T(u)\|_{W^{1,p}(B)}^2(\|u\|+\|\tilde T(u)\|_{W^{1,p}(B)})^{p-2},\\
&\|I_q'(u)\|_{*}\le b\|u-\tilde T(u)\|_{W^{1,p}(B)}^{p-1},
\end{aligned}
\end{equation}
where $\|\cdot\|_{*}$ denotes the norm in the dual space $(W^{1,p}(B))'$.
\end{proposition}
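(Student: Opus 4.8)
The plan is to establish the two claims in order --- the compactness of $\tilde T$ first, then the pair of estimates in \eqref{J1} --- both resting on two classical pointwise inequalities for the monotone field $z\mapsto|z|^{p-2}z$ when $1<p<2$: there exist constants $c_p,c_p'>0$ such that
\[
\bigl(|\xi|^{p-2}\xi-|\eta|^{p-2}\eta\bigr)\cdot(\xi-\eta)\ge c_p\,|\xi-\eta|^2\bigl(|\xi|+|\eta|\bigr)^{p-2}
\]
(the right-hand side being $0$ when $\xi=\eta=0$) and
\[
\bigl||\xi|^{p-2}\xi-|\eta|^{p-2}\eta\bigr|\le c_p'\,|\xi-\eta|^{p-1}
\]
for every $\xi,\eta\in\mathbb R^N$; the scalar versions (the case $N=1$) will be used as well.

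\emph{Compactness of $\tilde T$.} Recalling \eqref{eq:tildeT_def}, I would factor $\tilde T$ as the composition
\[
W^{1,p}(B)\ \hookrightarrow\ L^\ell(B)\ \xrightarrow{\ u\mapsto f_q(u)\ }\ L^{\ell'}(B)\ \hookrightarrow\ \bigl(W^{1,p}(B)\bigr)'\ \xrightarrow{\ T\ }\ W^{1,p}(B).
\]
The first embedding is compact since $\ell<p^*$; the Nemytskii operator $u\mapsto f_q(u)$ is continuous from $L^\ell$ into $L^{\ell'}$ because $|f_q(s)|\le C_q(1+|s|^{\ell-1})$ by \eqref{eq:phi_s0}; and $L^{\ell'}(B)\hookrightarrow(W^{1,p}(B))'$ is the continuous transpose of $W^{1,p}(B)\hookrightarrow L^\ell(B)$. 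It then suffices to check that $T$ is continuous. If $w_n\to w$ in $(W^{1,p}(B))'$, testing \eqref{T} for $v_n:=T(w_n)$ against $v_n$ yields $\|v_n\|_{W^{1,p}(B)}^{p-1}\le\|w_n\|_{*}$, so $(v_n)$ is bounded; up to a subsequence $v_n\rightharpoonup v$ in $W^{1,p}(B)$ and $v_n\to v$ in $L^p(B)$, and subtracting the weak form of \eqref{T} for $v:=T(w)$ from that for $v_n$, both tested against $v_n-v$, gives
\begin{multline*}
\int_B\bigl(|\nabla v_n|^{p-2}\nabla v_n-|\nabla v|^{p-2}\nabla v\bigr)\cdot\nabla(v_n-v)\,dx\\
+\int_B\bigl(|v_n|^{p-2}v_n-|v|^{p-2}v\bigr)(v_n-v)\,dx=\langle w_n-w,\,v_n-v\rangle\longrightarrow0.
\end{multline*}
Both integrands on the left are nonnegative, hence each integral vanishes in the limit; applying the first pointwise inequality above to each integrand and then Hölder's inequality with exponents $2/p$ and $2/(2-p)$ --- using that $\int_B(|\nabla v_n|+|\nabla v|)^p\,dx$ and $\int_B(|v_n|+|v|)^p\,dx$ remain bounded --- forces $\nabla v_n\to\nabla v$ and $v_n\to v$ in $L^p(B)$, i.e. $v_n\to v$ in $W^{1,p}(B)$. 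Thus $T$ is continuous and, the middle arrow being compact, $\tilde T$ is compact (it even sends weakly convergent sequences to strongly convergent ones).

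\emph{The estimates \eqref{J1}.} Set $v:=\tilde T(u)$, so that $v$ solves \eqref{T} with datum $w=f_q(u)$. Subtracting the weak form of \eqref{T} from the definition of $I_q'(u)$, the $f_q$-terms cancel and one obtains, for every $\psi\in W^{1,p}(B)$,
\[
I_q'(u)[\psi]=\int_B\bigl(|\nabla u|^{p-2}\nabla u-|\nabla v|^{p-2}\nabla v\bigr)\cdot\nabla\psi\,dx+\int_B\bigl(|u|^{p-2}u-|v|^{p-2}v\bigr)\psi\,dx.
\]
For the first estimate take $\psi=u-v$ and apply the first pointwise inequality to both integrands, getting
\[
I_q'(u)[u-v]\ge c_p\int_B\frac{|\nabla u-\nabla v|^2}{(|\nabla u|+|\nabla v|)^{2-p}}\,dx+c_p\int_B\frac{|u-v|^2}{(|u|+|v|)^{2-p}}\,dx.
\]
Hölder's inequality with exponents $2/p$ and $2/(2-p)$ bounds the first integral from below by $\|\nabla(u-v)\|_{L^p(B)}^2\bigl(\int_B(|\nabla u|+|\nabla v|)^p\,dx\bigr)^{(p-2)/p}$ and the second one analogously; replacing both denominators by the larger quantity $D:=\int_B(|\nabla u|+|\nabla v|)^p\,dx+\int_B(|u|+|v|)^p\,dx$, which satisfies $D\le 2^{p-1}(\|u\|_{W^{1,p}(B)}+\|v\|_{W^{1,p}(B)})^p$, and using the super-additivity bound $\|\nabla(u-v)\|_{L^p(B)}^2+\|u-v\|_{L^p(B)}^2\ge 2^{1-2/p}\|u-v\|_{W^{1,p}(B)}^2$ (valid because $2/p>1$), the two terms combine into $a\,\|u-v\|_{W^{1,p}(B)}^2(\|u\|_{W^{1,p}(B)}+\|v\|_{W^{1,p}(B)})^{p-2}$ for a suitable $a>0$ (the case $u=v=0$ is trivial). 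For the second estimate, apply instead the second pointwise inequality to the two differences in the above formula for $I_q'(u)[\psi]$ and then Hölder with exponents $p'=p/(p-1)$ and $p$:
\begin{multline*}
|I_q'(u)[\psi]|\le c_p'\Bigl(\|\nabla(u-v)\|_{L^p(B)}^{p-1}\|\nabla\psi\|_{L^p(B)}+\|u-v\|_{L^p(B)}^{p-1}\|\psi\|_{L^p(B)}\Bigr)\\
\le c_p'\bigl(\|\nabla(u-v)\|_{L^p(B)}^{p-1}+\|u-v\|_{L^p(B)}^{p-1}\bigr)\|\psi\|_{W^{1,p}(B)}.
\end{multline*}
Since $\|\nabla(u-v)\|_{L^p(B)}^{p-1}+\|u-v\|_{L^p(B)}^{p-1}\le 2^{1/p}\|u-v\|_{W^{1,p}(B)}^{p-1}$ by concavity of $r\mapsto r^{(p-1)/p}$, taking the supremum over $\|\psi\|_{W^{1,p}(B)}\le1$ yields $\|I_q'(u)\|_{*}\le b\,\|u-v\|_{W^{1,p}(B)}^{p-1}$ with $b=c_p'2^{1/p}$.

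\emph{Main obstacle.} The only genuinely non-elementary ingredient is the continuity of $T$, which I handle above through the monotonicity-plus-Hölder argument rather than by quoting abstract monotone operator theory; the rest is bookkeeping. The delicate point is to track the powers of $2$ and, above all, to verify that the pointwise weight $(|\nabla u|+|\nabla v|)^{p-2}$ genuinely upgrades to the norm weight $(\|u\|_{W^{1,p}(B)}+\|v\|_{W^{1,p}(B)})^{p-2}$ after Hölder and the super-additivity inequality --- a passage in which the hypothesis $p<2$ (so that $2/p>1$) is exactly what is used.
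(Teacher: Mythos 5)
The paper does not prove this proposition: it is imported verbatim from \cite[Proposition A.3]{BF}, so there is no in-text argument to compare against. Your proof is correct and is the standard one for this kind of statement: the two Simon-type pointwise inequalities for $z\mapsto|z|^{p-2}z$ in the singular range $1<p<2$, the factorization of $\tilde T$ through the compact embedding $W^{1,p}(B)\hookrightarrow L^\ell(B)$ with $\ell<p^*$, continuity of $T$ by monotonicity plus H\"older, and the cancellation of the $f_q$-terms in $I_q'(u)[\psi]$ against the weak formulation of \eqref{T} for $v=\tilde T(u)$. All the exponent bookkeeping checks out: the reverse-H\"older step with exponents $2/p$ and $2/(2-p)$, the monotone decrease of $t\mapsto t^{(p-2)/p}$ that lets you enlarge the weight to $D$, and the elementary norm comparisons. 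One cosmetic remark: in the continuity of $T$ you do not actually need to extract a weakly convergent subsequence or identify its limit beforehand --- boundedness of $(v_n)$ already makes $\langle w_n-w,v_n-v\rangle\to0$ with $v:=T(w)$, and the rest of the argument identifies the strong limit directly --- so that extraction can be dropped.
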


We note that \eqref{J1} implies that the set $\{u\,:\,\tilde T(u)=u\}$ coincides with the set of critical points of $I_q$.

\begin{lemma}[{\cite[Lemma 5.2]{BL}}]\label{lem:propJ3}
For every $c\in\mathbb R$ there exists $\kappa_c>0$ such that
\[
\|u\|_{W^{1,p}(B)}+\|\tilde T(u)\|_{W^{1,p}(B)}\le \kappa_c(1+\|u-\tilde T(u)\|_{W^{1,p}(B)})
\]
for every $u\in W^{1,p}(B)$ with $I_q(u)\le c$.
\end{lemma}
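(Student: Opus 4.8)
The plan is to carry out the classical argument that bounds sublevel sets via an Ambrosetti--Rabinowitz-type condition, the only quasilinear adaptation being that the usual $C^2$ gradient is replaced by the estimate of Proposition \ref{Ttildecompact1}. Fix $q$ (all constants below may depend on it) and abbreviate $d:=\|u-\tilde T(u)\|_{W^{1,p}(B)}$.

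First I would establish an Ambrosetti--Rabinowitz-type inequality for the truncated nonlinearity: there is $C_q>0$ such that $\ell F_q(s)\le f_q(s)s+C_q$ for every $s\ge0$. For $s\in[0,s_0]$ this is immediate, since $\ell F_q(s)=\frac{\ell}{q}s^q\le s^q=f_q(s)s$ (recall that $q>\ell$ in the regime considered). For $s>s_0$ one computes directly from \eqref{eq:phi_s0} that the leading $s^\ell$-contributions to $f_q(s)s$ and to $\ell F_q(s)$ cancel exactly, leaving
\[
f_q(s)s-\ell F_q(s)=s_0^{q-1}\,\frac{q-\ell}{\ell}\,s+(\text{constant}),
\]
whose slope is positive since $q>\ell$; hence the left-hand side is bounded below. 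Integrating, $\int_B\big(F_q(u)-\tfrac1\ell f_q(u)u\big)\,dx\le \tfrac{C_q}{\ell}|B|$ for all $u\in W^{1,p}(B)$.

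Next, for any $u$ with $I_q(u)\le c$ I would combine \eqref{I} with the identity $I_q'(u)[u]=\|u\|_{W^{1,p}(B)}^p-\int_B f_q(u)u\,dx$ (writing $\|u\|_{W^{1,p}(B)}^p:=\int_B(|\nabla u|^p+|u|^p)\,dx$) to get
\[
\Big(\tfrac1p-\tfrac1\ell\Big)\|u\|_{W^{1,p}(B)}^p=I_q(u)-\tfrac1\ell I_q'(u)[u]-\int_B\Big(\tfrac1\ell f_q(u)u-F_q(u)\Big)\,dx\le c+\tfrac1\ell|I_q'(u)[u]|+\tfrac{C_q}{\ell}|B|,
\]
with positive coefficient on the left because $\ell>p$. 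Using the second inequality in \eqref{J1}, $|I_q'(u)[u]|\le\|I_q'(u)\|_*\|u\|_{W^{1,p}(B)}\le b\,d^{p-1}\|u\|_{W^{1,p}(B)}$, and Young's inequality with exponents $p$ and $p/(p-1)$ gives $d^{p-1}\|u\|_{W^{1,p}(B)}\le\eta\|u\|_{W^{1,p}(B)}^p+C_\eta d^p$. Choosing $\eta$ small enough to absorb this term into the left-hand side yields $\|u\|_{W^{1,p}(B)}^p\le C(1+d^p)$, hence $\|u\|_{W^{1,p}(B)}\le\kappa_c(1+d)$; then $\|\tilde T(u)\|_{W^{1,p}(B)}\le\|u\|_{W^{1,p}(B)}+d\le\kappa_c(1+d)+d$, and after enlarging $\kappa_c$ the claim follows.

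The step I expect to be the main obstacle is the Ambrosetti--Rabinowitz inequality above, specifically checking that the top-order terms of $f_q(s)s$ and $\ell F_q(s)$ cancel for $s>s_0$ and that the surviving linear term carries the favourable sign precisely because $q>\ell$. The rest is routine; note in particular that only the second inequality in \eqref{J1} is used, so the factor $(\|u\|_{W^{1,p}(B)}+\|\tilde T(u)\|_{W^{1,p}(B)})^{p-2}$, singular for $p<2$, never enters the argument.
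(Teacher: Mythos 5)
Your proof is correct and is essentially the argument behind the cited Lemma 5.2 of \cite{BL}, which the paper does not reproduce: an Ambrosetti--Rabinowitz-type inequality for the truncation $f_q$ (which indeed holds, since the $s^{\ell}$-terms of $f_q(s)s$ and $\ell F_q(s)$ cancel and the surviving linear term has positive coefficient $(q-\ell)s_0^{q-1}$ for $q>\ell$ --- your $\frac{q-\ell}{\ell}s_0^{q-1}$ is a harmless arithmetic slip) combined with the dual-norm estimate in \eqref{J1} and Young's inequality. No gaps; nothing further is needed.
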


As the operator $u-\tilde T(u)$ is not Lipschitz,  it can not be used as a generalized pseudogradient vector field for $I'_q(u)$.  To overcome this obstacle, we rely on the results proved in \cite{BL,BartschLiuWeth} that are reformulated in our framework in \cite{BF}.

\begin{lemma}[{\cite[Lemma A.5]{BF}}]\label{KTI1}
Let $W:=W^{1,p}(B)\setminus\{u\,:\,\tilde T(u)=u\}$. There exists a locally Lipschitz continuous operator $K: W\to W^{1,p}(B)$ satisfying the following properties:
\begin{itemize}
\item[(i)] $K(\mathcal C\cap W)\subset \mathcal C$;
\item[(ii)] $\frac12\|u-K(u)\|_{W^{1,p}(B)}\le\|u-\tilde T(u)\|_{W^{1,p}(B)}\le2\|u-K(u)\|_{W^{1,p}(B)}$ for all $u\in W$;
\item[(iii)] let $a>0$ be the constant given in Proposition \ref{Ttildecompact1}, then
$$I_q'(u)[u-K(u)]\ge\frac{a}2 \|u-\tilde T(u)\|_{W^{1,p}(B)}^2(\|u\|_{W^{1,p}(B)}+\|\tilde T(u)\|_{W^{1,p}(B)})^{p-2}\quad\mbox{for all }u\in W.$$
\end{itemize}
\end{lemma}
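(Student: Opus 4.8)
The goal is to construct a locally Lipschitz pseudo-gradient-type vector field $K$ from the non-Lipschitz map $u - \tilde T(u)$, in such a way that $K$ preserves the cone and still encodes the gradient information of $I_q$. The plan is to follow the abstract scheme of \cite{BL,BartschLiuWeth}, exactly as adapted to the $p$-Laplacian Neumann setting in \cite{BF}: since we are in the excerpt allowed to invoke \cite[Lemma A.5]{BF} verbatim, this is essentially a matter of recalling that construction and checking that the cone-preservation property survives. Concretely, I would first recall that $\tilde T$ is continuous (indeed compact, by Proposition \ref{Ttildecompact1}) and cone-preserving (Lemma \ref{cononelcono1}), and that its fixed-point set coincides with the critical set of $I_q$, so that $W = W^{1,p}(B)\setminus\{u : \tilde T(u)=u\}$ is open and on it $u \mapsto u - \tilde T(u)$ is a continuous, nowhere-vanishing vector field.

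The main step is a partition-of-unity / local-approximation argument. For each $u_0 \in W$, by continuity of $\tilde T$ one can find a radius $r_{u_0}>0$ and a single vector $w_{u_0}\in W^{1,p}(B)$ — heuristically $w_{u_0}\approx u_0 - \tilde T(u_0)$ — such that on the ball $B(u_0,r_{u_0})\subset W$ the constant field $u\mapsto w_{u_0}$ is a good substitute for $u-\tilde T(u)$ in the sense that $\tfrac12\|u-\tilde T(u)\| \le \|w_{u_0}\| \le 2\|u-\tilde T(u)\|$ and that the pairing $I_q'(u)[w_{u_0}]$ still dominates the quantity in item (iii), using the estimate \eqref{J1} from Proposition \ref{Ttildecompact1} together with the continuity of $I_q'$. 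Then take a locally finite refinement of this open cover, a subordinate locally Lipschitz partition of unity $\{\rho_i\}$, and set $K(u) := u - \sum_i \rho_i(u)\, w_{u_0^{(i)}}$. The resulting $K$ is locally Lipschitz because each $\rho_i$ is and the sum is locally finite; items (ii) and (iii) then follow from the local estimates by convexity of the defining inequalities (a convex combination of vectors each satisfying $\tfrac12\|u-\tilde T(u)\|\le\|\cdot\|\le 2\|u-\tilde T(u)\|$ and each pairing well against $I_q'(u)$ inherits the same bounds, possibly after shrinking the neighborhoods so the two-sided norm bounds have a little room, e.g. working with $\tfrac23$ and $\tfrac32$ locally).

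To obtain item (i), the cone-preservation of $K$, I would make the local vectors $w_{u_0}$ cone-compatible: precisely, choose $w_{u_0}$ so that whenever $u\in\mathcal C\cap W$ is close to $u_0\in\mathcal C$, one has $u - w_{u_0}\in\mathcal C$. This is where the convexity and closedness of $\mathcal C$ (properties (i)--(iv) recalled in Section \ref{sec:truncated}) and the cone-preservation of $\tilde T$ are used: $u - (u-\tilde T(u)) = \tilde T(u)\in\mathcal C$, so the exact field lands in $\mathcal C$, and for the approximating constant field one picks $w_{u_0}$ as a suitable convex-combination-type perturbation (for instance $w_{u_0} = u_0 - \tilde T(u_0)$ itself, or a small locally constant correction of it) so that $u - w_{u_0}$ stays in the convex set $\mathcal C$ for $u$ near $u_0$; if necessary one replaces $w_{u_0}$ by $u - P_{\mathcal C}(\,\cdot\,)$-type expressions but keeping them locally constant. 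Since $K(u) = \sum_i \rho_i(u)(u - w_{u_0^{(i)}})$ is then, for $u\in\mathcal C$, a convex combination of points of $\mathcal C$, convexity of $\mathcal C$ gives $K(u)\in\mathcal C$.

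The main obstacle is reconciling the cone-preservation requirement (i) with the quantitative gradient estimates (ii) and (iii) simultaneously: one must choose the local constant vectors $w_{u_0}$ to lie in an appropriate cone-shaped neighborhood of $u_0-\tilde T(u_0)$ \emph{and} still pair correctly against $I_q'$, and then verify that taking convex combinations does not destroy either property. The $p<2$ issue — the exponent $p-2<0$ in \eqref{J1} — forces the norm-ratio bounds in (ii) to be kept reasonably tight so that the factor $(\|u\|+\|\tilde T(u)\|)^{p-2}$ in (iii) is controlled uniformly on each neighborhood; this is handled by shrinking the $r_{u_0}$, which is harmless since local finiteness is all that is needed. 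Since all of this is carried out in \cite[Lemma A.5]{BF} (building on \cite[Lemma 5.2]{BL} and \cite{BartschLiuWeth}) in precisely our setting, the cleanest route is to cite that result directly and merely indicate the above construction for completeness.
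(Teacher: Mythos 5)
The paper offers no proof of this lemma at all: it is imported verbatim from \cite[Lemma A.5]{BF}, which in turn adapts \cite{BL} and \cite{BartschLiuWeth}. Since your bottom line is to cite that result directly, your proposal agrees with the paper's treatment of the statement.

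That said, the sketch you append contains one step that would fail as written, and it is exactly the delicate point of the cited construction. You propose $K(u)=u-\sum_i\rho_i(u)\,w_{u_0^{(i)}}$ with locally constant vectors $w_{u_0}\approx u_0-\tilde T(u_0)$, and you argue cone-preservation by choosing $w_{u_0}$ so that $u-w_{u_0}$ ``stays in $\mathcal C$ for $u$ near $u_0$''. This cannot be arranged: $u-w_{u_0}=\tilde T(u_0)+(u-u_0)$ up to a small correction, and since $\mathcal C$ has empty interior in $W^{1,p}(B)$ (as the paper itself stresses in the Introduction), no smallness of $u-u_0$ forces $\tilde T(u_0)+(u-u_0)$ back into $\mathcal C$; a generic perturbation destroys both nonnegativity and monotonicity. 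The construction in \cite{BL,BartschLiuWeth} avoids this by never letting the free variable $u$ appear outside the partition of unity: one sets $K(u)=\sum_i\rho_i(u)\,\tilde T(w_i)$, where $w_i$ is chosen in the intersection of the covering ball centered at $u_i$ with $\mathcal C$ whenever that intersection is nonempty (and $w_i=u_i$ otherwise). For $u\in\mathcal C\cap W$ only the indices whose balls contain $u$ contribute, so the corresponding $w_i$ lie in $\mathcal C$, hence $\tilde T(w_i)\in\mathcal C$ by Lemma \ref{cononelcono1}, and $K(u)\in\mathcal C$ by convexity---no interior is needed. Properties (ii)--(iii) then follow by taking the radii small relative to $\|u_i-\tilde T(u_i)\|_{W^{1,p}(B)}$ and using the continuity of $\tilde T$ together with the estimates \eqref{J1}, much as you indicate. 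In short: the citation is the right move and matches the paper, but to make item (i) work the ancillary sketch must use the convex-combination-of-values form of $K$, not the ``$u$ minus a locally constant field'' form.
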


\begin{lemma}[{\cite[Lemma A.6]{BF}}]\label{conseqPS1}
Let $c\in\mathbb R$ be such that $I_q'(u)\neq 0$ for all $u\in \mathcal C$ with $I_q(u)=c$. Then there exist two positive constants $\bar\varepsilon$ and $\bar\delta$ such that the following inequalities hold
\begin{itemize}
\item[(i)] $\|I_q'(u)\|_*\ge\bar\delta$ for all $u\in \mathcal C$ with $|I_q(u)-c|\le 2\bar\varepsilon$;
\item[(ii)] $\|u-K(u)\|_{W^{1,p}(B)}\ge\bar\delta$ for all $u\in \mathcal C$ with $|I_q(u)-c|\le 2\bar\varepsilon$.
\end{itemize}
\end{lemma}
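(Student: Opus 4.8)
\medskip

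\noindent\textbf{Proof strategy.} The plan is to obtain both bounds as standard consequences of the Palais--Smale condition for $I_q$, deriving (ii) from (i) by means of the structural estimates already collected in Proposition~\ref{Ttildecompact1} and Lemma~\ref{KTI1}.

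First I would prove (i) by contradiction. If no such pair $(\bar\varepsilon,\bar\delta)$ existed, then choosing $\bar\varepsilon=\bar\delta=1/n$ one would find, for each $n\in\mathbb N$, a function $u_n\in\mathcal C$ with $|I_q(u_n)-c|\le 2/n$ and $\|I_q'(u_n)\|_*<1/n$. This is exactly a Palais--Smale sequence for $I_q$ at level $c$, so by the Palais--Smale condition a subsequence converges in $W^{1,p}(B)$ to some $u$; since $\mathcal C$ is closed in $W^{1,p}(B)$ (property (iv) of the cone) we have $u\in\mathcal C$, and since $I_q$ is of class $C^1$ we get $I_q(u)=c$ and $I_q'(u)=0$, contradicting the assumption that $I_q'$ has no zero on $\{I_q=c\}\cap\mathcal C$. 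This produces constants $\bar\varepsilon_1,\bar\delta_1>0$ with $\|I_q'(u)\|_*\ge\bar\delta_1$ for all $u\in\mathcal C$ with $|I_q(u)-c|\le 2\bar\varepsilon_1$.

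For (ii) I would keep $\bar\varepsilon=\bar\varepsilon_1$ and note that on this strip $I_q'(u)\neq0$, hence by the remark following \eqref{J1} one has $\tilde T(u)\neq u$, so $u\in W$ and $K(u)$ is defined there. Chaining the second inequality of \eqref{J1} with Lemma~\ref{KTI1}(ii) gives
\[
\bar\delta_1\le\|I_q'(u)\|_*\le b\,\|u-\tilde T(u)\|_{W^{1,p}(B)}^{p-1}\le b\,2^{p-1}\,\|u-K(u)\|_{W^{1,p}(B)}^{p-1},
\]
so $\|u-K(u)\|_{W^{1,p}(B)}\ge\bigl(\bar\delta_1/(b\,2^{p-1})\bigr)^{1/(p-1)}$. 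Taking $\bar\delta$ to be the minimum of $\bar\delta_1$ and this last quantity then makes a single pair $(\bar\varepsilon,\bar\delta)$ valid for both (i) and (ii).

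There is no genuine obstacle here: the whole content sits in the Palais--Smale condition together with the already established estimates. The only points needing attention are reconciling the two occurrences of $\bar\delta$ into one constant (hence the minimum at the end), and checking that $K$ is legitimately defined throughout the strip $\{|I_q-c|\le 2\bar\varepsilon\}$ — which holds precisely because (i) keeps $I_q'$ bounded away from zero there, so that no fixed point of $\tilde T$ lies in that strip.
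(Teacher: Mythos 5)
Your proof is correct: part (i) follows from the Palais--Smale condition via the contradiction argument you give (using that $\mathcal C$ is closed so the limit stays in the cone), and part (ii) then follows from (i) by chaining the second inequality in \eqref{J1} with Lemma~\ref{KTI1}(ii), with the final $\bar\delta$ taken as the minimum of the two constants. The paper states this lemma without proof, citing \cite{BF}; your argument is the standard one and matches that cited proof in substance, so there is nothing to add.
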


\begin{lemma}\label{deformation1} Let $c\in\mathbb R$ be such that $I_q'(u)\neq 0$ for all $u\in \mathcal C$ with $I_q(u)=c$ and let $\bar{\varepsilon}$ be as in Lemma \ref{conseqPS1}. For every $\varepsilon\leq\bar{\varepsilon}$ there exists a function $\eta:\mathcal C\to\mathcal C$ satisfying the following properties:
\begin{itemize}
\item[(i)] $\eta$ is continuous with respect to the topology of $W^{1,p}(B)$;
\item[(ii)] $I_q(\eta(u))\le I_q(u)$ for all $u\in\mathcal C$;
\item[(iii)] $I_q(\eta(u))\le c-\varepsilon$ for all $u\in\mathcal C$ such that $|I_q(u)-c|<\varepsilon$;
\item[(iv)] $\eta(u)=u$ for all $u\in\mathcal C$ such that $|I_q(u)-c|>2\varepsilon$.
\end{itemize}
\end{lemma}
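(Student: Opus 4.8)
The plan is to construct the deformation $\eta$ as the time-one map of a flow generated by a suitably truncated version of the locally Lipschitz vector field $u - K(u)$ provided by Lemma \ref{KTI1}, following the classical proof of the Deformation Lemma but paying attention that the flow must not leave the cone $\mathcal C$. First I would introduce cutoff functions: let $\psi:\mathcal C\to[0,1]$ be locally Lipschitz with $\psi\equiv 1$ on $\{u\in\mathcal C:|I_q(u)-c|\le\varepsilon\}$ and $\psi\equiv 0$ on $\{u\in\mathcal C:|I_q(u)-c|\ge 2\varepsilon\}$ (e.g.\ built from $|I_q(u)-c|$, which is continuous, via a Lipschitz truncation); and note that on the support of $\psi$ we have $|I_q(u)-c|\le 2\varepsilon\le 2\bar\varepsilon$, so by Lemma \ref{conseqPS1}(ii), $\|u-K(u)\|_{W^{1,p}(B)}\ge\bar\delta>0$, hence the normalized field
\[
V(u):=-\psi(u)\,\frac{u-K(u)}{\|u-K(u)\|_{W^{1,p}(B)}}
\]
is well defined and locally Lipschitz on $\mathcal C$ (extended by $0$ outside $\operatorname{supp}\psi$), with $\|V(u)\|_{W^{1,p}(B)}\le 1$.

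Next I would address the invariance of $\mathcal C$. The key point is that $V(u)$ points "towards the cone": writing $V(u) = -\psi(u)\|u-K(u)\|^{-1}(u-K(u))$, the Cauchy problem $\dot\sigma = V(\sigma)$, $\sigma(0)=u\in\mathcal C$, should be solved by a fixed-point/Euler-scheme argument showing $\sigma(s)\in\mathcal C$ for all $s\ge 0$. Concretely, since $K(\mathcal C\cap W)\subset\mathcal C$ by Lemma \ref{KTI1}(i) and $\mathcal C$ is a closed convex cone, an implicit Euler step $\sigma_{n+1} = \sigma_n + h V(\sigma_n)$ rearranges (up to positive scalars) into a convex combination of $\sigma_n\in\mathcal C$ and $K(\sigma_n)\in\mathcal C$, hence stays in $\mathcal C$; passing to the limit $h\to 0$ and using closedness of $\mathcal C$ gives $\sigma(s)\in\mathcal C$. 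This is the step I expect to be the main obstacle: one must make the convex-combination heuristic rigorous despite the normalization factor $\|u-K(u)\|^{-1}$ and the cutoff $\psi$, and one must ensure global existence of the flow, which follows because $\|V\|\le 1$ gives linear-in-$s$ growth bounds and the Palais--Smale condition prevents blow-up in finite time on sublevel sets. I would lean on the analogous construction already carried out in \cite{BF} (it underlies Lemmas \ref{KTI1} and \ref{conseqPS1}) and cite \cite{BL,BartschLiuWeth} for the abstract framework.

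Then I would set $\eta(u):=\sigma(u,T_0)$ for a fixed time $T_0>0$ to be chosen. Property (i), continuity in the $W^{1,p}$-topology, follows from continuous dependence on initial data for ODEs with locally Lipschitz right-hand side. Property (iv) is immediate: if $|I_q(u)-c|>2\varepsilon$ then $\psi\equiv 0$ near $u$ along the flow (using that $s\mapsto I_q(\sigma(s))$ is monotone, see below), so $V(\sigma(s))=0$ and $\sigma(s)\equiv u$. Property (ii): along the flow,
\[
\frac{d}{ds}I_q(\sigma(s)) = I_q'(\sigma(s))[V(\sigma(s))] = -\frac{\psi(\sigma(s))}{\|\sigma(s)-K(\sigma(s))\|_{W^{1,p}(B)}}\,I_q'(\sigma(s))[\sigma(s)-K(\sigma(s))]\le 0
\]
by Lemma \ref{KTI1}(iii), since the right-hand side there is nonnegative; hence $I_q(\eta(u))\le I_q(u)$.

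Finally, for property (iii) I would quantify the energy decrease. Suppose $|I_q(u)-c|<\varepsilon$. As long as the trajectory stays in $\{|I_q-c|\le\varepsilon\}$ we have $\psi=1$, and combining Lemma \ref{KTI1}(iii) with Lemma \ref{conseqPS1}(i)--(ii) and Lemma \ref{lem:propJ3} (to bound $\|\sigma\|+\|\tilde T(\sigma)\|$ from above on the sublevel set $\{I_q\le c+\varepsilon\}$, since $p-2<0$ makes the factor $(\|\sigma\|+\|\tilde T(\sigma)\|)^{p-2}$ a \emph{lower} bound issue), one gets a strictly positive lower bound $\frac{d}{ds}I_q(\sigma(s))\le -m$ for some constant $m=m(c,\varepsilon)>0$. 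Choosing $T_0:=2\varepsilon/m$ forces the trajectory either to exit the strip through the bottom, reaching $I_q\le c-\varepsilon$ and staying there by monotonicity, or to decrease by at least $mT_0=2\varepsilon$ over $[0,T_0]$, which also yields $I_q(\eta(u))\le I_q(u)-\varepsilon\le c-\varepsilon$. In either case (iii) holds, completing the proof. I would remark that the only genuinely new ingredient compared to the standard deformation lemma is the cone-invariance of the flow; everything else is a verbatim adaptation using the estimates \eqref{J1} and Lemmas \ref{KTI1}--\ref{conseqPS1}.
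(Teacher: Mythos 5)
Your proposal is correct and follows essentially the same route as the paper: a normalized, cut-off vector field built from $u-K(u)$, the associated flow with cone invariance (which the paper delegates to \cite[Lemma 3.8]{BF}, while you sketch the convex-combination Euler-step argument directly), monotone decrease of $I_q$ via Lemma \ref{KTI1}(iii), and the quantitative decay on the strip using Lemmas \ref{conseqPS1} and \ref{lem:propJ3} to choose the stopping time. The only cosmetic slip is calling $\sigma_{n+1}=\sigma_n+hV(\sigma_n)$ an \emph{implicit} Euler step (it is explicit), and the appeal to Palais--Smale for global existence is unnecessary since $\|V\|\le 1$ already gives it.
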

\begin{proof}
Let $\varepsilon\leq\bar{\varepsilon}$.
Let $\chi:\mathbb R\to [0,1]$ be a smooth cut-off function such that
$$
\chi(t)=
\begin{cases}1\quad&\mbox{if }|t-c|<\varepsilon,\\
0&\mbox{if }|t-c|>2\varepsilon.
\end{cases}
$$
Recalling the definition of $K$ in Lemma \ref{KTI1}, let $\Phi: W^{1,p}(B)\to W^{1,p}(B)$ be the map defined by
$$\Phi(u):=\begin{cases}\chi(I_q(u))\frac{u-K(u)}{\|u-K(u)\|_{W^{1,p}(B)}}\quad&\mbox{if }|I_q(u)-c|\le 2\varepsilon,\\
0&\mbox{otherwise.}\end{cases}$$
Note that the definition of $\Phi$ is well posed by Lemma \ref{conseqPS1}. For all $u\in\mathcal C$, we consider the Cauchy problem
\begin{equation}\label{CauchyProblem}
\begin{cases}\frac{d}{dt}\eta(t,u(x))=-\Phi(\eta(t,u(x)))\quad&(t,x)\in(0,\infty)\times B,\\
\eta(0,u(x))=u(x) & x\in B.
 \end{cases}
\end{equation}
Being $K$ locally Lipschitz continuous by Lemma \ref{KTI1}, for all $u\in\mathcal C$ there exists a unique solution $\eta(\cdot, u)\in C^1([0,\infty);W^{1,p}(B))$.

For $u\in \mathcal C$, we shall define $\eta(u):=\eta(\bar t, u)$ for a suitable $\bar t$ to be specified later. Since for every $t$, $\eta(t,\cdot)$ preserves the cone and satisfies properties (i) and (iv), the same holds true also for $\eta$. In particular, the preservation of the cone can be proved as in \cite[Lemma 3.8]{BF}, using the property that  $\tilde T(\mathcal C)\subset \mathcal C$, cf. Lemma \ref{cononelcono1}.

Let us prove now (ii). Also in this case, it holds for all $t$. Indeed, for every $u\in\mathcal C$ and $t>0$, we can write
\begin{equation}\label{eq:flusso_decrescente1}
\begin{aligned}I_q(\eta(t,u))-I_q(u)&=\int_0^t\frac{d}{ds}I_q(\eta(s,u))ds\\
&\hspace{-2.5cm}=-\int_0^t\frac{\chi(I_q(\eta(s,u)))}{\|\eta(s,u)-K(\eta(s,u))\|_{W^{1,p}(B)}}I_q'(\eta(s,u))[\eta(s,u)-K(\eta(s,u))]ds\\
&\hspace{-2.5cm}\le-\frac{a}2\displaystyle{\int_0^t\dfrac{\|\eta(s,u)-\tilde T(\eta(s,u))\|_{W^{1,p}(B)}^2 \chi(I_q(\eta(s,u)))}{\|\eta(s,u)-K(\eta(s,u))\|_{W^{1,p}(B)}(\|\eta(s,u)\|_{W^{1,p}(B)}+\|\tilde T(\eta(s,u))\|_{W^{1,p}(B)})^{2-p}}}
ds\\
&\le0,
\end{aligned}
\end{equation}
where we have used the inequality in Lemma \ref{KTI1}-(iii).

It remains to choose $\bar t$ in such a way that (iii) holds. Let $u\in\mathcal C$ be such that $|I_q(u)-c|<\varepsilon$ and let $t$ be sufficiently large. Then, two cases arise: either there exists $s\in[0,t]$ for which $I_q(\eta(s,u))\le c-\varepsilon$ and so, by the previous calculation we get immediately that $I_q(\eta(t,u))\le c-\varepsilon$, or for all $s\in[0,t]$, $I_q(\eta(s,u))> c-\varepsilon$. In this second case,
$$c-\varepsilon< I_q(\eta(s,u))\le I_q(u)< c+\varepsilon.$$
In particular,  being $\varepsilon\leq\bar\varepsilon$,  Lemma \ref{conseqPS1}-(i) applies, providing $\eta(s,u)\in W:= W^{1,p}(B)\setminus\{u\,:\,\tilde T(u)=u\}$. By the definition of $\chi$ and Lemma \ref{conseqPS1}-(ii), it results that for all $s\in[0,t]$
$$
\chi(I_q(\eta(s,u)))=1,\quad\|\eta(s,u)-K(\eta(s,u))\|_{W^{1,p}(B)}\ge\bar\delta.
$$

Hence, by \eqref{eq:flusso_decrescente1}, Lemma \ref{lem:propJ3}, Lemma \ref{KTI1}-(ii)-(iii), and Lemma \ref{conseqPS1}, we obtain for all $t>0$
$$
\begin{aligned}
I_q(\eta(t,u))&\le I_q(u)\\
&\hspace{-1cm}-\frac{a}{2\kappa_{c+\bar\varepsilon}^{2-p}}\int_0^t\frac{\|\eta(s,u)-\tilde T(\eta(s,u))\|_{W^{1,p}(B)}^2}{\|\eta(s,u)-K(\eta(s,u))\|_{W^{1,p}(B)}(1+\|\eta(s,u)-\tilde T(\eta(s,u))\|_{W^{1,p}(B)})^{2-p}}ds\\
&\le I_q(u)-\frac{a\bar\delta}{8\kappa_{c+\varepsilon}^{2-p}}\int_0^t\frac{\|\eta(s,u)-K(\eta(s,u))\|_{W^{1,p}(B)}}{\left(1+\frac{1}{2}\|\eta(s,u)-K(\eta(s,u))\|_{W^{1,p}(B)}\right)^{2-p}}ds\\
&< c+\varepsilon- \frac{a\bar\delta}{8\kappa_{c+\varepsilon}^{2-p}}\left(1+\frac{\bar\delta}{2}\right)^{p-2}t,
\end{aligned}
$$
where we have used that the function $f(x)=x^2(1+x)^{p-2}$ is increasing in $\mathbb R^+$.

Therefore, $I_q(\eta(t,u))< c-\varepsilon$ for every
\[
t\ge\frac{16\varepsilon}{a\bar\delta}\left(1+\frac{\bar\delta}{2}\right)^{2-p}\kappa_{c+\varepsilon}^{2-p}=:\bar t,
\]
and the proof is concluded.
\end{proof}

\begin{proof}[$\bullet$ Proof of Theorem \ref{mountainpass1}]
By Lemma \ref{lem:geometry}, $\alpha_q\le c_q<\infty$. Indeed, for $k$ large enough, the curve $\gamma(t)=kt$, $t\in[0,1]$ belongs to $\Gamma$. Hence, $\Gamma\neq\emptyset$ and so $c_q<\infty$. On the other side, for every $\gamma\in\Gamma$, $\max_{[0,1]}I_q(\gamma) \ge \alpha_q$, and so also $c_q\ge \alpha_q$.

Now, suppose by contradiction that there are no critical points $u\in\mathcal C$ of $I_q$ at level $c_q$. By Lemma \ref{conseqPS1}-(i), $\|I'_q(u)\|_*\ge \bar\delta$ for every $u\in \mathcal C$ such that  $|I_q(u)-c_q|\le 2\bar\varepsilon$.
Fix
\[
\varepsilon < \min \left\{ \bar\varepsilon, \frac{c_q}{2}-\frac{\alpha_q}{4} \right\}.
\]

Let $\gamma\in\Gamma$ be any curve such that $\max_{t\in[0,1]}I_q(\gamma(t))<c_q+\varepsilon$, and define $\bar\gamma(t):=\eta(\gamma(t))$ for $t\in[0,1]$, with $\eta$ as in Lemma \ref{deformation1}.
Being $c_q-2\varepsilon>\alpha_q/2$, neither $\gamma(0)$ nor $\gamma(1)$ belong to the strip $\{u\,:\,|I_q(u)-c_q|\le 2\varepsilon\}$ and consequently, by Lemma \ref{deformation1}-(iv), $\bar\gamma\in\Gamma$. Hence, by Lemma \ref{deformation1}-(iii), $\max_{t\in[0,1]}I_q(\bar\gamma(t))<c_q$, contradicting the definition of $c_q$ as infimum.
\end{proof}

\section{The mountain pass solution is non-constant for $q$ large}\label{sec:q_large}
In this section we will find the limit profile, as $q\to\infty$, of the mountain pass solution $u_q$ whose existence has been proved in Theorem \ref{mountainpass1} for every $q>2$. As a byproduct of this result, we immediately have that $u_q$ is non-constant for $q$ large.

To this aim, we first state some lemmas whose proofs can be found in \cite[Section 5]{BF} for the case $p>2$, but they continue to hold also in this setting with $1<p<2$.

The next lemma ensures that the Nehari-type set $\mathcal{N}_q$ defined in \eqref{eq:M_q_def} is homeomorfic to a sphere; its proof uses property \eqref{eq:f/s_increasing} of ${f_q}$.

\begin{lemma}[cf. {\cite[Lemma 5.3]{BF}}]\label{gH} For every $u\in\mathcal C\setminus\{0\}$ there exists a unique $h_q(u)>0$ such that $h_q(u)u\in\mathcal{N}_q$. It holds
\begin{equation}\label{Iqhq}
I_q(tu)>0 \quad\mbox{for all }t\in(0,h_q(u)];
\end{equation}
\[
I'_q(tu)[u]>0 \text{ if and only if } t\in(0,h_q(u)).
\]
Furthermore, if $(u_n)\subset \mathcal C\setminus\{0\}$ is such that $u_n\to u\in \mathcal C\setminus\{0\}$ with respect to the $W^{1,p}$-norm, then $h_q(u_n)\to h_q(u)$. Finally, the map
$$H:u\in \mathcal C\cap\mathcal S^1\mapsto h_q(u)u\in\mathcal N_q,\quad\mbox{where }\mathcal S^1:= \{u\in W^{1,p}(B)\,:\,\|u\|_{W^{1,p}(B)}=1\}$$ is a homeomorphism.
\end{lemma}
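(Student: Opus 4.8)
The plan is to fix $u \in \mathcal C \setminus \{0\}$ and study the function $\psi_u(t) := I_q(tu)$ for $t > 0$. Writing
\[
\psi_u(t) = \frac{t^p}{p}\int_B (|\nabla u|^p + |u|^p)\,dx - \int_B F_q(tu)\,dx,
\]
I would differentiate to get $\psi_u'(t) = \frac{1}{t}I_q'(tu)[tu] = t^{p-1}\int_B(|\nabla u|^p+|u|^p)\,dx - \int_B f_q(tu)u\,dx$. Dividing the bracketed Nehari relation by $t^{p-1}$, the condition $tu \in \mathcal N_q$ becomes
\[
\int_B(|\nabla u|^p+|u|^p)\,dx = \int_B \frac{f_q(tu)}{(tu)^{p-1}}u^p\,dx \cdot \frac{1}{1} \quad\text{(on } \{u>0\}\text{)}.
\]
By property \eqref{eq:f/s_increasing}, for each fixed $x$ with $u(x) > 0$ the integrand $t \mapsto f_q(tu(x))/t^{p-1}$ is strictly increasing; hence the right-hand side is a continuous, strictly increasing function of $t$. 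For the limits: as $t \to 0^+$, since $f_q(s) = s^{q-1}$ near $0$ with $q > p$, the right-hand side tends to $0$; as $t \to \infty$, by the subcritical growth \eqref{eq:subcritical} ($\ell > p$), it tends to $+\infty$. Since $u \not\equiv 0$ the left-hand side is a fixed positive number, so there is a unique $h_q(u) > 0$ solving the equation, and $\psi_u'(t) > 0$ on $(0, h_q(u))$, $\psi_u'(t) < 0$ on $(h_q(u), \infty)$. This gives the claimed sign of $I_q'(tu)[u]$ and, since $\psi_u(0) = 0$ and $\psi_u$ is strictly increasing on $(0, h_q(u)]$, also \eqref{Iqhq}.

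Next I would prove continuity of $u \mapsto h_q(u)$ on $\mathcal C \setminus \{0\}$. Suppose $u_n \to u$ in $W^{1,p}(B)$ with $u \neq 0$. The sequence $(h_q(u_n))$ is bounded: an upper bound follows because the left-hand sides $\int_B(|\nabla u_n|^p + |u_n|^p)\,dx$ are bounded while, by \eqref{eq:f/s_increasing} and \eqref{eq:subcritical}, the Nehari identity forces $h_q(u_n)$ to stay bounded (if $h_q(u_n) \to \infty$ along a subsequence, the right-hand side would blow up by superlinearity in the $L^\ell$-sense, using the compact embedding $\mathcal C \hookrightarrow L^\ell(B)$ from Lemma \ref{bounded} to pass $u_n \to u$ in $L^\ell$); a positive lower bound follows similarly (if $h_q(u_n) \to 0$, the right-hand side tends to $0$ while the left-hand side stays bounded below by $\|u\|_{W^{1,p}}^p/2 > 0$ for large $n$). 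Then along any subsequence $h_q(u_n) \to h_* \in (0,\infty)$; passing to the limit in the Nehari identity (again using strong $L^\ell$ convergence and dominated convergence, the domination coming from the $L^\infty$ bounds in $\mathcal C$, or just continuity of the Nehari functional) shows $h_* u \in \mathcal N_q$, so $h_* = h_q(u)$ by uniqueness. Hence the whole sequence converges.

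Finally, for the homeomorphism $H : \mathcal C \cap \mathcal S^1 \to \mathcal N_q$, $H(u) = h_q(u)u$: it is continuous by the continuity of $h_q$ just proved and of scalar multiplication, and it is a bijection because every $v \in \mathcal N_q$ can be written uniquely as $v = \|v\|_{W^{1,p}} \cdot (v/\|v\|_{W^{1,p}})$ with $v/\|v\|_{W^{1,p}} \in \mathcal C \cap \mathcal S^1$ (note $v \neq 0$ and $\mathcal C$ is a cone, so $v/\|v\|_{W^{1,p}} \in \mathcal C \cap \mathcal S^1$), and $h_q(v/\|v\|_{W^{1,p}}) = \|v\|_{W^{1,p}}$ by uniqueness applied to the direction $u = v/\|v\|_{W^{1,p}}$. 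The inverse is $v \mapsto v/\|v\|_{W^{1,p}}$, which is plainly continuous on $\mathcal N_q$ since $\mathcal N_q$ is bounded away from $0$ (Lemma \ref{palla_nella_Nehari} together with the embedding $\mathcal C \hookrightarrow L^\infty$ gives $\|v\|_{W^{1,p}} \geq c > 0$, or one argues directly).

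The main obstacle I anticipate is the continuity of $h_q$, specifically obtaining the uniform bounds on $(h_q(u_n))$ away from $0$ and $\infty$: this is where one must carefully exploit both structural properties \eqref{eq:f/s_increasing} and \eqref{eq:subcritical} of $f_q$ and invoke the compact embedding of the cone into $L^\ell(B)$ to pass to the limit in the superlinear term. Everything else is a routine consequence of the strict monotonicity of $t \mapsto f_q(ts)/t^{p-1}$.
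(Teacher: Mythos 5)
Your proof is correct and follows the standard Nehari-manifold argument (study $t\mapsto I_q(tu)$, differentiate, use the strict monotonicity coming from \eqref{eq:f/s_increasing} and the growth conditions \eqref{eq:subcritical} for existence, uniqueness and continuity of $h_q$); the paper does not reprove this lemma but cites \cite[Lemma 5.3]{BF}, and your argument is exactly the realization one would expect of that citation. Two small points worth tightening if you wrote this out in full: (a) property \eqref{eq:f/s_increasing} as stated says ``increasing,'' so one should verify the monotonicity is strict (it is, by a direct computation on both branches of $f_q$), since strictness is what gives uniqueness of $h_q(u)$; (b) in the boundedness-from-above step for the sequence $(h_q(u_n))$, the ``blow-up of the right-hand side'' should be made explicit, e.g.\ by splitting $B$ into $\{h_q(u_n)u_n<s_1\}$ and its complement and using that $\int_B u_n^\ell\,dx\to\|u\|_{L^\ell}^\ell>0$, so that the contribution of the second set is $\gtrsim h_q(u_n)^{\ell-p}$ while the first vanishes.
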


The following lemma guarantees that the mountain pass level in the cone coincides with the infimum on the Nehari-type set in the cone.

\begin{lemma}[cf. {\cite[Lemma 5.4]{BF}}]\label{lemma:nehari}
The following equalities hold
\begin{equation}\label{c_uguali}
c_q=\inf_{u\in\mathcal C\setminus\{0\}}\sup_{t\ge0}I_q(tu)=\inf_{u\in\mathcal N_q}I_q(u).
\end{equation}
\end{lemma}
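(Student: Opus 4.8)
The plan is to prove the two equalities in \eqref{c_uguali} in the standard Nehari-manifold fashion, adapted to the cone setting, taking advantage of the tools already assembled: the fibering map analysis from Lemma \ref{gH}, the mountain pass geometry from Lemma \ref{lem:geometry}, and the identification of the min-max level with a critical value from Theorem \ref{mountainpass1}. First I would set $m_q:=\inf_{u\in\mathcal N_q}I_q(u)$ and show $c_q=m_q$ via two inequalities; the equality with $\inf_{u\in\mathcal C\setminus\{0\}}\sup_{t\ge0}I_q(tu)$ then comes almost for free from Lemma \ref{gH}, since for each fixed $u\in\mathcal C\setminus\{0\}$ the function $t\mapsto I_q(tu)$ is positive and increasing on $(0,h_q(u))$, has derivative zero exactly at $h_q(u)$, and tends to $-\infty$ as $t\to\infty$ (using $\ell>p$ together with \eqref{eq:subcritical}, exactly as in the proof of Lemma \ref{lem:geometry}(ii)); hence $\sup_{t\ge0}I_q(tu)=I_q(h_q(u)u)$ is attained precisely at the unique point where the ray $\{tu\}$ meets $\mathcal N_q$. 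Taking the infimum over $u$ and using that $H$ in Lemma \ref{gH} is onto $\mathcal N_q$ gives $\inf_{u\in\mathcal C\setminus\{0\}}\sup_{t\ge0}I_q(tu)=m_q$.

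For $c_q\ge m_q$: I would take any $\gamma\in\Gamma_q$ and argue that its image meets $\mathcal N_q$. The endpoint $\gamma(0)\in U_{q,-}$ has $\|\gamma(0)\|_{L^\infty(B)}<\tau<\sigma$, so by Lemma \ref{palla_nella_Nehari} it is not in $\mathcal N_q$; moreover, since $I_q(\gamma(0))<\alpha_q/2$ while $I_q(u)>0$ on $\mathcal N_q$ near the origin and more to the point $\gamma(0)$ lies strictly inside the region where $I_q'(u)[u]>0$ — equivalently, writing $u=\|u\|_{W^{1,p}}\hat u$, one has $\|\gamma(0)\|_{W^{1,p}}<h_q(\hat\gamma(0))$ — the ``Nehari coordinate'' $t\mapsto \|u\|_{W^{1,p}}-h_q(\hat u)\|\hat u\|$ is negative at $t=0$. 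At the other end, $\gamma(1)\in U_{q,+}$ has $I_q(\gamma(1))<0$, which by \eqref{Iqhq} forces $\|\gamma(1)\|_{W^{1,p}}>h_q(\widehat{\gamma(1)})$, i.e. the same coordinate is positive at $t=1$. By continuity of $t\mapsto h_q(\widehat{\gamma(t)})$ (the final continuity statement in Lemma \ref{gH}; note $\gamma(t)\ne 0$ all along since $\|\gamma(0)\|_{L^\infty}>0$ is forced by $I_q(\gamma(0))\ge$ something — more carefully one restricts to $t$ with $\gamma(t)\ne0$, and $\gamma$ cannot pass through $0$ because $I_q(0)=0>$ the value at $\gamma(1)$ is false, so instead I use that the sublevel structure keeps $\gamma$ away from $0$, or simply that if $\gamma(t_*)=0$ then by continuity $\gamma$ crosses the small sphere $\|u\|_{L^\infty}=\tau$ anyway), there is $t^*$ with $\gamma(t^*)\in\mathcal N_q$, whence $\max_{[0,1]}I_q\circ\gamma\ge I_q(\gamma(t^*))\ge m_q$. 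Taking the infimum over $\Gamma_q$ yields $c_q\ge m_q$.

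For $c_q\le m_q$: given $u\in\mathcal N_q$, I construct a competitor path. Consider the ray $t\mapsto tu/\|u\|_{L^\infty(B)}$ or more simply rescale so that the path $s\mapsto s\,u$, $s\in[0,k']$ runs from $0$ through $u$ (at $s=1$, recalling $\|u\|_{W^{1,p}}=h_q(\hat u)\|\hat u\|$ so $u$ is exactly the Nehari point on its ray) out to large $s=k'$ where $I_q(k'u)<0$; after an affine reparametrization to $[0,1]$ this is an element of $\Gamma_q$ once one checks the endpoints land in $U_{q,-}$ and $U_{q,+}$ respectively — the far endpoint is fine by Lemma \ref{lem:geometry}-type estimates, and for the near endpoint one uses that small multiples $su$ have small $L^\infty$-norm ($\mathcal C\subset L^\infty$, Lemma \ref{bounded}) and small positive energy by continuity of $I_q$ with $I_q(0)=0<\alpha_q/2$. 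Along this path the maximum of $I_q$ is attained at $s=1$ by the monotonicity in Lemma \ref{gH}, i.e. equals $I_q(u)$. Hence $c_q\le I_q(u)$, and taking the infimum over $u\in\mathcal N_q$ gives $c_q\le m_q$.

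The main obstacle I anticipate is the careful handling of the endpoints of the paths and the possibility that a path in $\Gamma_q$ passes through the origin, where the ``Nehari coordinate'' $h_q$ is not defined; ruling this out (or working around it, e.g. by a small perturbation of $\gamma$ avoiding $0$, or by observing that any path from $U_{q,-}$ to $U_{q,+}$ must cross the sphere $\{\|u\|_{L^\infty}=\tau\}$ and one can cut the path there) is the delicate point, and is precisely where the specific structure of $U_{q,\pm}$ — the $L^\infty$ thresholds at $\tau<\sigma$ and the energy thresholds at $\alpha_q/2$ and $0$ — is used. Everything else is the routine Nehari–mountain-pass dictionary, and since Lemma \ref{gH} already packages the fibering-map facts, the argument is short. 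This is exactly the scheme of \cite[Lemma 5.4]{BF}, and I would remark that the proof there carries over verbatim since only properties \eqref{eq:f/s_increasing}, \eqref{eq:subcritical}, Lemma \ref{palla_nella_Nehari}, Lemma \ref{lem:geometry} and Lemma \ref{gH} are invoked, all of which hold for $1<p<2$ as well.
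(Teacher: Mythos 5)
Your proposal is correct and follows essentially the same route as the paper, which simply defers to \cite[Lemma 5.4]{BF}: the fibering-map facts of Lemma \ref{gH} identify $\sup_{t\ge0}I_q(tu)$ with $I_q(h_q(u)u)$, rays through Nehari points yield admissible competitors giving $c_q\le\inf_{\mathcal N_q}I_q$, and the crossing argument (with $h_q>1$ at the $U_{q,-}$ endpoint forced by Lemma \ref{palla_nella_Nehari} and $h_q<1$ at the $U_{q,+}$ endpoint forced by \eqref{Iqhq}) gives the reverse inequality. Of the several workarounds you sketch for a path passing through the origin, the clean one is the last: cut $\gamma$ at its final crossing of $\{\|u\|_{L^\infty(B)}=\tau\}$, after which $\gamma\ne0$, so $h_q(\gamma(\cdot))$ is defined and continuous on the remaining subinterval and the intermediate value argument applies.
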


In the next lemma, we refine for $u_q$ the $C^1$-a priori bound previously obtained in Lemma \ref{lem:a_priori_bounds}, using that ${f}_q(u_q)=u_q^{q-1}$.

\begin{lemma}\label{commonbound} For all $q>2$,
$$\|u_q\|_{L^\infty(B)}\le \left(\frac{q}{p}\right)^{\frac{1}{q-p}}\quad\mbox{and}\quad \|u'_q\|_{L^\infty(B)}\le \left(\frac{q-p}{q(p-1)}\right)^{\frac{1}{p}}.$$
In particular, $\limsup_{q\to\infty}\|u_q\|_{L^\infty(B)}\le 1$ and  $\limsup_{q\to\infty}\|u'_q\|_{L^\infty(B)}\le p^{-\frac{1}{p}}$.
\end{lemma}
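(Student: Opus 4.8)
The plan is to derive both bounds from the scalar ODE satisfied by the radial solution $u_q$, exploiting the key improvement that, by Lemma \ref{lem:a_priori_bounds} together with Lemma \ref{truncated}, the solution $u_q$ of \eqref{eq:f_tilde_q} is actually a solution of \eqref{eq:Pq}, so that ${f}_q(u_q)=u_q^{q-1}$ pointwise. First I would revisit the argument of Lemma \ref{lem:a_priori_bounds}: the monotonicity of $u_q$ in the cone $\mathcal C$ and the computation $\int_{\{u_q\ge s_0\}}(u_q^{p-1}-{f}_q(u_q))\,dx = -\int_{\{u_q\le s_0\}}(u_q^{p-1}-u_q^{q-1})\,dx$ still apply, but now I only need the much cheaper conclusion that $u_q(0)\le 1$ cannot be improved directly; instead I track the constant sharply. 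The point is that with the genuine power nonlinearity in place, the conserved-type quantity can be computed explicitly.

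The main step is to introduce, exactly as in the proof of Lemma \ref{lem:a_priori_bounds} (following \cite[Lemma 2.2]{ABF-ESAIM}), the function
\[
H(r):=\frac{u_q'(r)^p}{p'}+\Phi(u_q(r)),\qquad \Phi(u):=\int_1^u s^{q-1}\,ds=\frac{u^q-1}{q},
\]
using now that ${f}_q(u_q(r))=u_q(r)^{q-1}$ for every $r$. Writing the equation in radial form gives $H'(r)=-\frac{N-1}{r}u_q'(r)^p\le 0$, so $H$ is nonincreasing, and since $u_q'(0)=0$ we get $\frac{u_q'(r)^p}{p'}+\Phi(u_q(r))\le \Phi(u_q(0))$ for all $r$, i.e.
\[
\frac{u_q'(r)^p}{p'}\le \frac{u_q(0)^q-u_q(r)^q}{q}\le \frac{u_q(0)^q}{q}.
\]
To close the argument I need an a priori bound on $u_q(0)$ that is better than $1$ in the right regime; here I would instead argue at the maximum point. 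Since $u_q$ is radially nondecreasing, $\|u_q\|_{L^\infty}=u_q(1)=:m_q$. Testing the equation, or more directly integrating the identity $H(1)\le H(0)$ is not quite enough by itself, so the clean route is: at $r=0$, the equation in radial form together with $u_q'(0)=0$ forces (via de l'Hôpital / Taylor expansion of the $p$-Laplacian in radial coordinates) $u_q(0)^{p-1}\le u_q(0)^{q-1}$ only if $u_q(0)\ge 1$; combined with \eqref{eq:u(0)<1} one gets that near the interior the solution is controlled. The sharp bound comes from a standard trick: multiply the radial equation by $r^{N-1}u_q'$ and integrate, or — more simply — observe that at the maximum point $r=1$ the Neumann condition $u_q'(1)=0$ and the equation $-\Delta_p u_q+u_q^{p-1}=u_q^{q-1}$ combined with $-\Delta_p u_q(1)\ge 0$ (since $u_q'$ vanishes at $1$ and $u_q$ is nondecreasing, the radial $p$-Laplacian is $\le 0$ there — i.e. $u_q$ has an interior-type maximum) yields $m_q^{p-1}\le m_q^{q-1}$, hence $m_q\ge 1$; this is the wrong direction, so the correct sharp bound must be extracted differently.

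The correct and clean derivation of the stated constant $\left(q/p\right)^{1/(q-p)}$ is to integrate the equation over $B$: since $\partial_\nu u_q=0$,
\[
\int_B u_q^{p-1}\,dx=\int_B u_q^{q-1}\,dx,
\]
and then use that $u_q$ is nondecreasing so that $u_q\le m_q$ everywhere while $u_q(0)=\min u_q$; the sharp bound actually follows by working with the function $H$ as above but integrating against $r^{N-1}$ and using the monotonicity to get $\frac{m_q^p}{p}\le \frac{m_q^q}{q}$ plus lower-order terms — more precisely, I expect one shows $\frac{u_q'(r)^p}{p'}+\frac{u_q(r)^p}{p}$ has nonpositive radial derivative up to the right-hand side, yielding at $r=1$ the inequality $\frac{m_q^p}{p}\le \frac{m_q^q}{q}$, i.e. $m_q^{q-p}\ge p/q$ — again the reverse inequality, which tells me the genuine argument goes through the energy/Pohozaev identity rather than a pointwise one. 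So in the write-up I would: (1) recall $\|u_q\|_\infty\le 1+(p')^{1/p}$ from Lemma \ref{lem:a_priori_bounds} and ${f}_q(u_q)=u_q^{q-1}$; (2) revisit the monotone-rearrangement/conservation argument of \cite[Lemma 2.2]{ABF-ESAIM} with $\Phi(u)=(u^q-1)/q$ to deduce $\frac{u_q'(r)^p}{p'}\le\Phi(m_q)-\Phi(u_q(r))$; (3) feed in the sharp bound $m_q\le(q/p)^{1/(q-p)}$ obtained from the identity $\int_B u_q^{p-1}=\int_B u_q^{q-1}$ combined with monotonicity (which gives $m_q^{q-1}|B|\ge\int u_q^{q-1}=\int u_q^{p-1}$ and a matching lower bound via $u_q(0)\le 1$, pinching $m_q$); (4) substitute to get $\|u_q'\|_\infty\le\left((q-p)/(q(p-1))\right)^{1/p}$; (5) take $q\to\infty$ to read off the two $\limsup$ statements, since $(q/p)^{1/(q-p)}\to1$ and $((q-p)/(q(p-1)))^{1/p}\to(p-1)^{-1/p}=(p')^{1/p}\cdot p^{-1/p}$... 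I would double-check this limit equals $p^{-1/p}$ as claimed, which requires $(p-1)^{-1/p}=p^{-1/p}$, false in general — so in fact the limit of $((q-p)/(q(p-1)))^{1/p}$ is $(p-1)^{-1/p}$ and the paper's $p^{-1/p}$ must come from a different normalization; I would reconcile this by noting $1/p'=1-1/p=(p-1)/p$, so $((q-p)/(q(p-1)))^{1/p}\to(p-1)^{-1/p}$, and the stated $p^{-1/p}$ bound should read with $p'$, so I would write the $\limsup$ consistently with whichever constant the pointwise step actually yields.

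The main obstacle, and the part I expect to require the most care, is step (3): pinning down the sharp $L^\infty$ bound $(q/p)^{1/(q-p)}$. The inequality $\int_B u_q^{p-1}\,dx=\int_B u_q^{q-1}\,dx$ is immediate from integrating the equation and using the Neumann condition, but turning it into a pointwise bound on $m_q=\|u_q\|_\infty$ uses the monotonicity of $u_q$ in an essential way — one bounds $\int_B u_q^{q-1}$ from below in terms of $m_q$ on an annulus near $\partial B$ and $\int_B u_q^{p-1}$ from above by $m_q^{p-1}|B|$, and the geometry of the ball enters through the volume of that annulus. Alternatively, and probably more in the spirit of the paper, one uses the conservation law $H'\le0$ to compare $u_q(0)$ and $m_q$ and then the equation at $r=0$; I would look at whether \cite[Lemma 2.2]{ABF-ESAIM} or \cite[Lemma 5.2]{BF} already contains exactly this sharp bound and simply cite it, adapting the constants to $1<p<2$. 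Everything else — the ODE manipulation, the monotonicity of $H$, and passing to the limit $q\to\infty$ — is routine, so the proof should be short once the sharp bound is secured.
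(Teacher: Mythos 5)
There is a genuine gap, in two places. First, your first integral is set up incorrectly: with $\Phi(u)=\int_1^u s^{q-1}\,ds$ alone, differentiating $H(r)=\tfrac{u_q'(r)^p}{p'}+\Phi(u_q(r))$ along the radial equation $(|u'|^{p-2}u')'+\tfrac{N-1}{r}|u'|^{p-2}u'=u^{p-1}-u^{q-1}$ gives $H'(r)=u_q^{p-1}u_q'-\tfrac{N-1}{r}(u_q')^p$, not $-\tfrac{N-1}{r}(u_q')^p$: the omitted term $u_q^{p-1}u_q'$ is nonnegative, so this $H$ is not monotone. Worse, the inequality you deduce from it, $\tfrac{(u_q')^p}{p'}\le\tfrac{u_q(0)^q-u_q(r)^q}{q}$, has a \emph{nonpositive} right-hand side (since $u_q$ is nondecreasing, $u_q(r)\ge u_q(0)$), so it would force $u_q'\equiv 0$; it cannot be correct, and it would also contradict the convergence $u_q\to G$ with $G$ nonconstant. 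The repair is to take the primitive of the \emph{full} right-hand side of the ODE, i.e. $H(r):=\tfrac{(u_q'(r))^p}{p'}+\tfrac{u_q(r)^q}{q}-\tfrac{u_q(r)^p}{p}$, which does satisfy $H'(r)=-\tfrac{N-1}{r}(u_q')^p\le 0$.

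Second, and this is the step you yourself flag as unresolved, you never obtain the sharp sup bound $(q/p)^{1/(q-p)}$: every route you try (evaluation at the maximum point, integrating the equation over $B$) produces, as you observe, an inequality in the wrong direction, and your final plan defers the key estimate to a citation you have not verified. With the corrected $H$ the whole lemma is short, and this is precisely the argument of the reference the paper invokes. Set $g(t):=\tfrac{t^q}{q}-\tfrac{t^p}{p}$, which is nonpositive exactly for $t^{q-p}\le q/p$ and attains its minimum $-\bigl(\tfrac1p-\tfrac1q\bigr)$ at $t=1$. Since $u_q'(0)=0$ and $u_q(0)\le 1$ (see \eqref{eq:u(0)<1} and \eqref{eq:u(0)}), one has $H(0)=g(u_q(0))\le 0$; since $u_q'(1)=0$ and $H$ is nonincreasing, $g(\|u_q\|_{L^\infty(B)})=g(u_q(1))=H(1)\le H(0)\le 0$, which is the bound $\|u_q\|_{L^\infty(B)}\le (q/p)^{1/(q-p)}$; and for every $r$, $\tfrac{(u_q'(r))^p}{p'}\le H(0)-g(u_q(r))\le -g(1)=\tfrac1p-\tfrac1q$, i.e. $\|u_q'\|_{L^\infty(B)}^p\le\tfrac{q-p}{q(p-1)}$. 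Your closing remark is, however, well taken: $\bigl(\tfrac{q-p}{q(p-1)}\bigr)^{1/p}\to(p-1)^{-1/p}$ as $q\to\infty$, not $p^{-1/p}$, so the ``in particular'' clause of the statement contains a slip in the constant; the explicit bounds, which are what is actually used afterwards, are the correct ones.
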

\begin{proof}
If $u_q\equiv1$ the thesis is immediately verified with $C=1$, otherwise the thesis can be proved following the argument in \cite[Lemma 5.5]{BF}.
\end{proof}

In view of the previous lemma, the existence of a limit profile follows.

\begin{lemma}[cf. {\cite[Lemma 5.6]{BF}}]\label{weakconv}
There exists a function $u_\infty\in \mathcal C$ for which
\begin{equation}\label{eq:convergences}
u_q\rightharpoonup u_\infty\;\mbox{ in }W^{1,p}(B), \quad u_q\to u_\infty \;\mbox{ in }C^{0,\nu}(\bar B) \quad\mbox{as }q\to\infty,
\end{equation}
for any $\nu\in(0,1)$. Furthermore, $u_\infty(1)=1$.
\end{lemma}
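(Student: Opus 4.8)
The plan is to extract a limit from the a priori bounds of Lemma \ref{commonbound}, identify the limit as belonging to $\mathcal C$, upgrade the convergence via compactness in the cone, and finally pin down the boundary value $u_\infty(1)=1$ using the Nehari constraint together with the lower bound of Lemma \ref{palla_nella_Nehari}. First I would use Lemma \ref{commonbound}, which gives $\|u_q\|_{L^\infty(B)}\le (q/p)^{1/(q-p)}$ and $\|u_q'\|_{L^\infty(B)}\le \left(\frac{q-p}{q(p-1)}\right)^{1/p}$; both right-hand sides are bounded uniformly for, say, $q\ge 3$, hence $(u_q)$ is bounded in $W^{1,\infty}_{\mathrm{rad}}(B)$, and a fortiori in $W^{1,p}(B)$. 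By reflexivity of $W^{1,p}(B)$ and the compact embedding $W^{1,\infty}\hookrightarrow C^{0,\nu}(\bar B)$ (Ascoli--Arzel\`a) we may pass to a subsequence with $u_q\rightharpoonup u_\infty$ in $W^{1,p}(B)$ and $u_q\to u_\infty$ in $C^{0,\nu}(\bar B)$ for every $\nu\in(0,1)$. Since $\mathcal C$ is convex and closed in $W^{1,p}(B)$ (property (iv) after \eqref{cone}), it is weakly closed, so $u_\infty\in\mathcal C$; alternatively the monotonicity and nonnegativity pass to the uniform limit directly.

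It remains to show $u_\infty(1)=1$. Here I would exploit that $u_q\in\mathcal N_q$ with, by Remark \ref{rem:ftilde=f} and Lemma \ref{commonbound}, $f_q(u_q)=u_q^{q-1}$ for $q$ large, so the Nehari identity reads $\int_B(|\nabla u_q|^p+u_q^p)\,dx=\int_B u_q^q\,dx$. On one hand, Lemma \ref{palla_nella_Nehari} gives $\|u_q\|_{L^\infty(B)}\ge\sigma>0$ uniformly, so by uniform convergence $\|u_\infty\|_{L^\infty(B)}\ge\sigma$, i.e. $u_\infty\not\equiv 0$, and since $u_\infty$ is nondecreasing, $u_\infty(1)=\max_{\bar B}u_\infty>0$. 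On the other hand, $\|u_q\|_{L^\infty(B)}=u_q(1)\to u_\infty(1)$, and Lemma \ref{commonbound} forces $\limsup_q u_q(1)\le 1$, hence $u_\infty(1)\le 1$. To get the reverse inequality $u_\infty(1)\ge1$ I would argue by contradiction: if $\|u_\infty\|_{L^\infty(B)}=u_\infty(1)=:m<1$, pick $m'\in(m,1)$; by uniform convergence $\|u_q\|_{L^\infty(B)}\le m'<1$ for $q$ large, whence $\int_B u_q^q\,dx\le |B|(m')^q\to0$, while $\int_B(|\nabla u_q|^p+u_q^p)\,dx\ge \int_B u_q^p\,dx\ge \sigma^p\,|\{u_q> \sigma/2\}|$. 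Using again Lemma \ref{palla_nella_Nehari} and the uniform gradient bound, the set $\{u_q>\sigma/2\}$ has measure bounded below by a constant independent of $q$ (a function in $\mathcal C$ with sup $\ge\sigma$ and derivative $\le L$ exceeds $\sigma/2$ on a ball of radius $\sigma/(2L)$ near $\partial B$, intersected with $B$), so the left-hand side of the Nehari identity stays bounded away from $0$ while the right-hand side tends to $0$, a contradiction. Therefore $u_\infty(1)=1$.

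The main obstacle is the last step: extracting a quantitative lower bound on $\int_B(|\nabla u_q|^p+u_q^p)\,dx$ that survives the limit $q\to\infty$, so as to contradict the vanishing of $\int_B u_q^q\,dx$ when $\|u_\infty\|_{L^\infty}<1$. The monotonicity structure of $\mathcal C$ combined with the uniform $C^1$ bound of Lemma \ref{commonbound} and the uniform positivity of Lemma \ref{palla_nella_Nehari} is exactly what makes this work: it converts an $L^\infty$ lower bound on $u_q$ into a uniform lower bound on the measure of a superlevel set, hence on $\int_B u_q^p\,dx$. Once this is in place the conclusion $u_\infty(1)=1$ is immediate, and since the limit is characterized, the whole sequence (not merely a subsequence) converges, as claimed.
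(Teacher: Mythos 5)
Your proof is correct, and the first half (uniform $C^1$ bounds from Lemma \ref{commonbound}, Ascoli--Arzel\`a, weak closedness of the convex cone) coincides with the paper's. Where you genuinely diverge is in the proof of $u_\infty(1)\ge 1$. The paper simply integrates the equation over $B$: the Neumann condition kills the divergence term, giving $\int_B u_q^{p-1}(1-u_q^{q-p})\,dx=0$, and since $u_q$ is nondecreasing and (for $q$ large) nonconstant, the integrand must change sign, forcing $u_q(0)<1<u_q(1)$ for \emph{every} such $q$; the conclusion then follows by combining $\liminf u_q(1)\ge 1$ with the $\limsup\le 1$ from Lemma \ref{commonbound}. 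You instead argue by contradiction from the Nehari identity: if $u_\infty(1)<1$ then $\int_B u_q^q\,dx\to 0$ geometrically, while the left-hand side stays bounded below because Lemma \ref{palla_nella_Nehari} plus the uniform Lipschitz bound force $u_q\ge\sigma/2$ on an annulus of fixed width near $\partial B$. Both arguments are sound (modulo the harmless constant $(\sigma/2)^p$ in place of $\sigma^p$ in your superlevel-set estimate). The paper's route is shorter, needs neither Lemma \ref{palla_nella_Nehari} nor the quantitative measure bound, and yields the extra pointwise information $u_q(0)<1<u_q(1)$, which the paper records as \eqref{eq:u(0)}; your route is more robust in that it would survive for problems where integrating the equation does not produce such a clean sign condition. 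One small caveat: your closing remark that ``the whole sequence converges since the limit is characterized'' is premature --- at this stage $u_\infty$ is only known to exist along a subsequence (the paper itself says ``up to a subsequence''), and the identification $u_\infty=G$, which upgrades the convergence to the full family, only comes in the proof of Theorem \ref{thm:asymptotic_q}.
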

\begin{proof}
In view of Lemma \ref{commonbound} and using the compactness of the embedding $C^1\hookrightarrow C^{0,\nu}$, \eqref{eq:convergences} immediately follows. Furthermore, up to a subsequence $u_q\to u_\infty$ pointwise, hence $u_\infty\in\mathcal C$. As for the last part of the statement, we observe that, if $u_q\equiv 1$ for every $q$, then obviously $u_\infty\equiv 1$. Otherwise, if $u_q\not\equiv 1$, integrating the equation satisfied by $u_q$, we get
\[
\int_B u_q^{p-1}(1-u_q^{q-p}) dx =0.
\]
Since $u_q$ is positive and nondecreasing, we deduce that
\begin{equation}\label{eq:u(0)}
u_q(0)<1, \quad u_q(1)>1\quad\mbox{for all }q\ge 2>p.
\end{equation}
Hence, $\|u_q\|_{L^{\infty}(B)}=u_q(1)>1$. Consequently, together with Lemma \ref{commonbound}, we get
\[1\le \liminf_{q\to\infty}u_q(1)= \liminf_{q\to\infty}\|u_q\|_{L^\infty(B)}\le\limsup_{q\to\infty}\|u_q\|_{L^\infty(B)}\le 1,\]
and so $u_\infty(1)=\lim_{q\to\infty}\|u_q\|_{L^\infty(B)}=1$.
\end{proof}

We can give a variational characterization of the solution of \eqref{eqforG} and a relation with the mountain pass level $c_q$.

\begin{lemma}[cf. {\cite[Lemmas 5.7 and 5.8]{BF}}]\label{cinfinito}
The quantity
$$
c_\infty:=\inf\left\{\frac{\|v\|_{W^{1,p}(B)}^p}p\,:\,v\in\mathcal C,\,v=1\mbox{ on }\partial B\right\}
$$
is uniquely achieved by the radial function $G$ satisfying \eqref{eqforG}.
Furtheremore, the following chain of inequalities holds
\begin{equation}\label{eq:c_infty<c_q}
\frac{\|G\|_{W^{1,p}(B)}^p}{p}=c_\infty\le\frac{\|u_\infty\|_{W^{1,p}(B)}^p}{p}\le\liminf_{q\to\infty}\frac{\|u_q\|_{W^{1,p}(B)}^p}{p}\le \liminf_{q\to\infty}c_q.
\end{equation}
\end{lemma}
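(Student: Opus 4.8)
The plan is to prove the two halves separately: first the variational characterization of $G$, then the chain of inequalities \eqref{eq:c_infty<c_q}. For the first part, I would argue that the infimum defining $c_\infty$ is attained and that the minimizer solves \eqref{eqforG}. Take a minimizing sequence $(v_n)\subset\mathcal C$ with $v_n=1$ on $\partial B$. Since $\mathcal C\subset L^\infty(B)$ (by \eqref{eq:C_embedded_Linf}) and the boundary value pins down the functions from below on a positive-measure set, the sequence is bounded in $W^{1,p}(B)$; extract a weak limit $v_\infty$. The set $\{v\in\mathcal C:\ v=1 \text{ on } \partial B\}$ is convex and closed in $W^{1,p}$, hence weakly closed, so $v_\infty$ lies in it; by weak lower semicontinuity of the norm, $v_\infty$ is a minimizer. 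To see that any minimizer $v$ solves \eqref{eqforG}: the constraint ``$v=1$ on $\partial B$, $v\in\mathcal C$'' is not quite a linear subspace because of the cone restriction, but radial monotone competitors with the same boundary trace are abundant, and one can test with $v+t\varphi$ for $\varphi$ radial; more cleanly, since $G$ as defined by the PDE \eqref{eqforG} is itself radial and radially nondecreasing (this follows from the maximum principle: $G$ is $p$-harmonic-type with zeroth order term, $0<G<1$ in $B$, and radial monotonicity of $G$ can be read off by comparison), $G$ is an admissible competitor, and conversely by strict convexity of $w\mapsto\int_B(|\nabla w|^p+|w|^p)$ the minimizer is unique; matching the two shows the minimizer equals $G$. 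Uniqueness of the solution of \eqref{eqforG} itself is standard (strict monotonicity of the $p$-Laplacian operator plus the $|G|^{p-2}G$ term).

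For the chain \eqref{eq:c_infty<c_q}, I would read it from right to left. The last inequality $\liminf_{q\to\infty}\|u_q\|_{W^{1,p}(B)}^p/p\le\liminf_{q\to\infty}c_q$ follows from the elementary bound $I_q(u_q)=c_q$ together with $F_q\ge0$, which gives $c_q=I_q(u_q)\le\|u_q\|_{W^{1,p}(B)}^p/p$; taking $\liminf$ yields the claim. The middle inequality $\|u_\infty\|_{W^{1,p}(B)}^p/p\le\liminf_{q\to\infty}\|u_q\|_{W^{1,p}(B)}^p/p$ is weak lower semicontinuity of the norm applied to $u_q\rightharpoonup u_\infty$ from Lemma~\ref{weakconv}. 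The first inequality $c_\infty\le\|u_\infty\|_{W^{1,p}(B)}^p/p$ is exactly the statement that $u_\infty$ is an admissible competitor in the variational problem for $c_\infty$: by Lemma~\ref{weakconv} we have $u_\infty\in\mathcal C$ and $u_\infty(1)=1$, i.e. $u_\infty=1$ on $\partial B$ (using that $u_\infty$ is radial), so $u_\infty$ lies in the constraint set and the inequality is immediate from the definition of $c_\infty$ as an infimum. Finally, $\|G\|_{W^{1,p}(B)}^p/p=c_\infty$ is the content of the first part.

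The main obstacle I expect is the first part: showing that the minimizer of the $c_\infty$ problem genuinely solves the PDE \eqref{eqforG} rather than some variational inequality, because the admissible class is a convex cone with the extra monotonicity constraint, so one does not get two-sided variations for free. The cleanest route around this is not to derive the Euler--Lagrange equation from the constrained problem at all, but instead to (a) solve \eqref{eqforG} directly by standard quasilinear elliptic theory, obtaining a unique $G$; (b) verify $G$ is radial and radially nondecreasing via the comparison principle (so $G\in\mathcal C$, hence admissible for $c_\infty$); (c) show $G$ is the \emph{unique} minimizer by testing the PDE against $G-v$ for an arbitrary competitor $v$ and using the strict convexity/monotonicity inequality $\int_B(|\nabla G|^{p-2}\nabla G\cdot\nabla(v-G)+|G|^{p-2}G(v-G))\le\frac1p\|v\|^p-\frac1p\|G\|^p$ together with the fact that the left-hand side vanishes because $v-G$ has zero boundary trace and $G$ solves \eqref{eqforG} weakly. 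This sidesteps the one-sided-variation issue entirely. Everything else in the lemma is soft (semicontinuity, nonnegativity of $F_q$, admissibility), so once the characterization of $G$ is pinned down the chain \eqref{eq:c_infty<c_q} drops out immediately.
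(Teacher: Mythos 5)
Your treatment of the first half of the lemma is sound: solving \eqref{eqforG} directly, checking that $G$ is radial (by uniqueness and rotation invariance) and radially nondecreasing (from the radial form $(r^{N-1}|G'|^{p-2}G')'=r^{N-1}G^{p-1}\ge0$), and then proving minimality and uniqueness by testing the weak formulation against $v-G\in W^{1,p}_0(B)$ and invoking strict convexity of $w\mapsto\int_B(|\nabla w|^p+|w|^p)\,dx$. This correctly sidesteps the one-sided-variation issue you identify, and is consistent with the strategy the paper imports from \cite{BF}. The first two inequalities in \eqref{eq:c_infty<c_q} are also handled correctly: $u_\infty$ is admissible for $c_\infty$ because $u_\infty\in\mathcal C$ and $u_\infty(1)=1$ by Lemma \ref{weakconv}, and the middle inequality is weak lower semicontinuity of the norm.

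There is, however, a genuine error in your justification of the last inequality. From $F_q\ge0$ you obtain $c_q=I_q(u_q)\le\|u_q\|_{W^{1,p}(B)}^p/p$, and taking $\liminf$ gives $\liminf_q c_q\le\liminf_q\|u_q\|_{W^{1,p}(B)}^p/p$ --- the \emph{reverse} of the claimed inequality. This is not cosmetic: the direction $\liminf_q\|u_q\|_{W^{1,p}(B)}^p/p\le\liminf_q c_q$ is precisely what is combined, in the proof of Theorem \ref{thm:asymptotic_q}, with $\limsup_q c_q\le c_\infty$ to force all the inequalities in \eqref{eq:c_infty<c_q} to become equalities. The correct argument uses the Nehari constraint rather than merely the sign of $F_q$: since $u_q\in\mathcal C$ is a critical point, it belongs to $\mathcal N_q$, and by Lemma \ref{truncated} one has $f_q(u_q)=u_q^{q-1}$, hence $F_q(u_q)=u_q^q/q$ and $\|u_q\|_{W^{1,p}(B)}^p=\int_Bu_q^q\,dx$. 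Therefore
\[
c_q=I_q(u_q)=\Bigl(\frac1p-\frac1q\Bigr)\|u_q\|_{W^{1,p}(B)}^p,
\qquad\text{so}\qquad
\frac{\|u_q\|_{W^{1,p}(B)}^p}{p}=\frac{q}{q-p}\,c_q,
\]
and since $q/(q-p)\to1$ as $q\to\infty$, the claimed inequality (indeed, equality of the two lower limits) follows. You should replace your argument for this step accordingly.
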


We are now ready to prove Theorem \ref{thm:asymptotic_q}.

\begin{proof}[$\bullet$ Proof of Theorem \ref{thm:asymptotic_q}.] Let $u_q$ be the mountain pass solution found in Theorem \ref{mountainpass1} for every $q>2$. We recall that $u_q$ can also be characterized by $I_q(u_q)=\inf_{u\in\mathcal N_q}I_q(u)$, cf. Lemma \ref{lemma:nehari}. Once we prove the asymptotic behavior \eqref{uqgoestoG} of $u_q$, being the convergence $C^{0,\nu}$ and recalling that $G$ is nonconstant, we can immediately conclude that $u_q$ is nonconstant for $q$ large enough.

The proof of \eqref{uqgoestoG} follows the lines of \cite[Theorem 1.3]{BF}; we report it here to highlight the role of the previous lemmas. Let $G$ be the unique solution of \eqref{eqforG}. Since $G\in\mathcal{C}\setminus\{0\}$, by Lemma \ref{gH} there exists a unique $h_q(G)>0$ such that $h_q(G)G \in \mathcal{N}_q$. We claim that, for $q$ large, $h_q(G)<s_0$. Indeed, let $t=\bar t_q$ be the unique solution of
\[
\|G\|_{W^{1,p}(B)}^p-\frac{1}{t^{p-1}}\int_B (tG)^{q-1}G dx=0,
\]
namely $\bar t_q=\left(\frac{\|G\|_{W^{1,p}(B)}^p}{\|G\|_{L^q(B)}^q}\right)^{\frac{1}{q-p}}$, then
\begin{equation}\label{eq:t_to1}
\bar t_q(G)\to\frac{1}{\|G\|_{L^\infty(B)}}=1 \quad\textrm{ as } q\to\infty.
\end{equation}
Since $s_0>2$, there exists $\bar q$ large, such that $\bar t_q<s_0$ for $q\ge \bar q$. Now, fix $q>\bar q$. Since by definition $t=h_q(G)$ is the unique solution of
\[
\|G\|_{W^{1,p}(B)}^p-\frac{1}{t^{p-1}}\int_B f_q(tG) G dx=0,
\]
and ${f}_q(s)=s^{q-1}$ as $s<s_0$, $h_q(G)=\bar t_q$ by uniqueness, and the claim is proved.

This implies that ${f}_q(h_q(G)G)=(h_q(G)G)^{q-1}$, and by \eqref{eq:t_to1} we get
\[
c_\infty=\frac{\|G\|_{W^{1,p}(B)}^p}{p}
=\lim_{q\to\infty} \frac{\|h_q(G) G\|_{W^{1,p}(B)}^p}{p}
=\lim_{q\to\infty} \left(I_q(h_q(G)G)+\frac{h_q(G)^q}{q}\int_B G^q dx \right).
\]
Using that $h_q(G)G\in\mathcal{N}_q$, we can rewrite the last term in the limit as follows
\[
c_\infty=\lim_{q\to\infty}\left( I_q(h_q(G)G)+ \frac{\|h_q(G) G\|_{W^{1,p}(B)}^p}{q} \right) = \lim_{q\to\infty} I_q(h_q(G)G).
\]
On the other hand, by Lemma \ref{lemma:nehari} we obtain
\[
c_q=\inf_{u\in\mathcal N_q}I_q(u)\le I_q(h_q(G)G).
\]
The previous two equations provide $c_\infty \geq \limsup_{q\to\infty} c_q$, which, together with Lemma~\ref{cinfinito}, imply
\begin{equation}\label{eq:c_infty_limit_c_q}
c_\infty=\lim_{q\to\infty}c_q.
\end{equation}
As a consequence, the inequalities in \eqref{eq:c_infty<c_q} are indeed equalities, so that
\[
\lim_{q\to\infty} \|u_q\|_{W^{1,p}(B)}=\|G\|_{W^{1,p}(B)}\quad\mbox{and}\quad \|u_\infty\|_{W^{1,p}(B)}=\|G\|_{W^{1,p}(B)}.
\]
Hence, $u_\infty$ achieves $c_\infty$ and, by Lemma \ref{cinfinito}, $u_\infty=G$.
Together with the $W^{1,p}$-weak convergence and the uniform convexity of $W^{1,p}(B)$, this implies that $u_q\to G$ in $W^{1,p}(B)$. By Lemma \ref{weakconv} the convergence is also $C^{0,\nu}(\bar B)$ for any $\nu\in(0,1)$.

Finally, let us prove that the following inequality holds
\begin{equation}
\label{eq:Iq<I1}
I_q(u_q)<I_q(1)\qquad\mbox{for $q$ sufficiently large.}
\end{equation}
Indeed, suppose by contradiction that there exists a sequence $q_n\to\infty$ such that $I_{q_n}(1)=c_{q_n}$ for every $n$.  As a consequence of \eqref{eq:c_infty_limit_c_q}, we can pass to the limit in the previous equality and obtain $\|1\|_{W^{1,p}(B)}^p/p=c_\infty$, thus contradicting the fact that $c_\infty$ is uniquely achieved by $G\not\equiv 1$, cf. Lemma \ref{cinfinito}.
\end{proof}

\section{The constant solution is a local minimizer on $\mathcal{N}_q$}\label{sec:1minimizer}

In this section we prove that the constant solution $1$ is a local minimizer on the Nehari-type set $\mathcal N_q$ for every $q>2$. To this aim, we shall need a Poincar\'e--Wirtinger-type inequality.

\begin{lemma}\label{lem:PW}
Fix $q>2$.
There exist $\delta_q \in (0,1)$ and a constant $C_{PW}>0$ such that for every $w\in \mathcal{N}_q$ with the property $\|w-1\|_{W^{1,p}(B)}\leq\delta_q$, it holds
\[
\|w-1\|_{L^p(B)}^p \leq C_{PW} \|\nabla w\|^p_{L^p(B)}.
\]
\end{lemma}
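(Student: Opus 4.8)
The plan is to argue by contradiction. Suppose the inequality fails: then for every $n\in\mathbb N$ there exists $w_n\in\mathcal N_q$ with $\|w_n-1\|_{W^{1,p}(B)}\le 1/n$ but $\|w_n-1\|_{L^p(B)}^p > n\,\|\nabla w_n\|_{L^p(B)}^p$. In particular $w_n\to 1$ in $W^{1,p}(B)$, so $\|w_n-1\|_{L^p(B)}\to 0$ as well, and hence $\|\nabla w_n\|_{L^p(B)}^p = o(\|w_n-1\|_{L^p(B)}^p)$. The idea is to normalize: set $\mu_n:=\|w_n-1\|_{W^{1,p}(B)}\to 0$ and $z_n:=(w_n-1)/\mu_n$, so that $\|z_n\|_{W^{1,p}(B)}=1$ and, from the failed inequality, $\|\nabla z_n\|_{L^p(B)}\to 0$. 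Up to a subsequence $z_n\rightharpoonup z$ in $W^{1,p}(B)$ and $z_n\to z$ strongly in $L^p(B)$ (using the compact embedding, or the cone compactness of Lemma \ref{bounded} since each $z_n$ is radial and, being a rescaled difference of monotone functions, still monotone). Weak lower semicontinuity of the $L^p$ norm of the gradient forces $\nabla z\equiv 0$, so $z$ is a constant $c$; moreover $\|z_n\|_{W^{1,p}(B)}=1$ together with $\|\nabla z_n\|_{L^p}\to0$ gives $\|z\|_{L^p(B)}=1$, i.e. $|c|^p|B|=1$, so $c\neq 0$.

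The contradiction must come from the Nehari constraint $w_n\in\mathcal N_q$. The key step is to expand the Nehari identity
\[
\int_B\bigl(|\nabla w_n|^p+|w_n|^p\bigr)\,dx=\int_B f_q(w_n)\,w_n\,dx
\]
around the constant $1$. Since $\|w_n-1\|_{W^{1,p}(B)}\le 1$ for $n$ large, Remark \ref{rem:ftilde=f} gives $f_q(w_n)=w_n^{q-1}$, so the right-hand side is $\int_B w_n^{q}\,dx$. Writing $w_n=1+\mu_n z_n$ and using that $1$ itself satisfies $\int_B(0+1)\,dx=\int_B 1\,dx$ (the Nehari identity for the constant), I subtract and divide by $\mu_n$. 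The gradient term contributes $\mu_n^{p-1}\int_B|\nabla z_n|^p\,dx = o(\mu_n^{p-1})\cdot\|z_n\|^p\to 0$ even after dividing — actually it is $o(1)$ relative to the linear terms since $p>1$ — while the zeroth-order terms expand as
\[
\frac1{\mu_n}\int_B\Bigl[(1+\mu_n z_n)^p - (1+\mu_n z_n)^{q}\Bigr]dx + \frac1{\mu_n}\int_B\bigl(1^{q}-1^p\bigr)dx.
\]
A Taylor expansion of $s\mapsto (1+\mu_n z_n)^p-(1+\mu_n z_n)^{q}$ at $\mu_n=0$ gives leading term $\mu_n(p-q)\int_B z_n\,dx + O(\mu_n^2\|z_n\|_{L^\infty}^2)$; note $\|z_n\|_{L^\infty(B)}$ stays bounded by Lemma \ref{bounded} (the $z_n$ lie, after a harmless translation, in a bounded subset of $\mathcal C$, or one works directly with $w_n\in\mathcal C\subset L^\infty$ and $\mu_n\|z_n\|_{L^\infty}=\|w_n-1\|_{L^\infty}\to0$). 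Dividing by $\mu_n$ and passing to the limit, the $O(\mu_n)$ remainder and the gradient term vanish, and I obtain
\[
(p-q)\int_B z\,dx = 0,
\]
hence $\int_B z\,dx=0$ since $q>p$. But $z\equiv c$ with $|c|^p|B|=1$, so $\int_B z\,dx = c|B|\neq 0$ — contradiction.

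The main obstacle, and the step requiring care, is the Taylor expansion of the Nehari identity: one must check that the quadratic remainder is genuinely $o(\mu_n)$ after dividing by $\mu_n$, and that the supercritical exponent $q$ does not spoil this. This is where Remark \ref{rem:ftilde=f} (so that $f_q(w_n)=w_n^{q-1}$ exactly) and the uniform $L^\infty$ control $\|w_n\|_{L^\infty(B)}\le s_0$ from Lemma \ref{bounded} are essential: on the bounded range $[0,s_0]$ the map $s\mapsto s^q$ is $C^2$ with derivatives bounded by constants depending on $q$ (but $q$ is fixed here), so $(1+t)^q = 1+qt+O(q^2t^2)$ uniformly for $|t|\le\mu_n s_0$, and the remainder is $O(\mu_n^2)$, which is $o(\mu_n)$. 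A secondary subtlety is justifying $\|z_n\|_{L^\infty(B)}$ bounded and $z_n\to z$ in $L^p$; both follow from the cone structure and Lemma \ref{bounded} applied to $w_n$, combined with $\mu_n^{-1}\|w_n-1\|_{L^p}=\|z_n\|_{L^p}\le 1$. With these in hand the contradiction is immediate, and choosing $C_{PW}$ and $\delta_q$ amounts to negating the quantifiers.
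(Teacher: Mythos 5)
Your overall strategy --- contradiction, blow-up normalization of $w_n-1$, identification of the limit as a nonzero constant, and a contradiction extracted from the linearized Nehari identity --- is exactly the paper's. The gap is in the one step you yourself flag as delicate: the quadratic Taylor remainder. You claim $\|z_n\|_{L^\infty(B)}$ stays bounded (equivalently $\|w_n-1\|_{L^\infty(B)}=\mu_n\|z_n\|_{L^\infty(B)}\to 0$), invoking Lemma \ref{bounded}. But Lemma \ref{bounded} applies to elements of $\mathcal C$, i.e.\ \emph{nonnegative} nondecreasing radial functions, and it controls the supremum (attained at $r=1$), not a dip near the origin. The function $z_n=(w_n-1)/\mu_n$ is radial and nondecreasing but changes sign: its positive part is indeed bounded in $L^\infty$ by Lemma \ref{bounded}, while its negative part is only bounded by $1/\mu_n$ (from $w_n\ge 0$); the ``harmless translation'' into the cone costs $1/\mu_n$ and returns only $\|z_n\|_{L^\infty}\le C/\mu_n$. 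For $N\ge 2$ (hence $p<N$) one has $W^{1,p}(B)\not\hookrightarrow L^\infty(B)$, and there are radial nondecreasing functions arbitrarily close to $1$ in $W^{1,p}$ that vanish at the origin; neither the cone nor the single scalar Nehari constraint excludes them. So the bound $O(\mu_n^2\|z_n\|_{L^\infty}^2)$ is unavailable; using only $|w_n-1|\le\max(1,C\mu_n)$ one finds that the remainder, after dividing by $\mu_n$, is of order $\mu_n\int_B z_n^2\,dx=O(1)$ rather than $o(1)$, and your fallback $\int_B z_n^2\,dx\le C$ via Sobolev also fails when $p^*<2$. Hence the limit identity $(p-q)\int_B z\,dx=0$ is not established (except in dimension $N=1$, where $W^{1,p}\hookrightarrow L^\infty$ and your argument is fine).

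The paper avoids this by never going to second order: by the mean value theorem the difference quotient in the Nehari identity equals
\[
w_n^p\,\frac{(1+\varepsilon_n v_n)^{q-p}-1}{\varepsilon_n}=(q-p)\,w_n^p\,(1+\xi_n v_n)^{q-p-1}\,v_n,\qquad \xi_n(x)\in(0,\varepsilon_n),
\]
where the coefficient $w_n^p(1+\xi_n v_n)^{q-p-1}$ is uniformly bounded because $1+\xi_n v_n$ lies between $1$ and $w_n$ and $0\le w_n\le s_0$ (Remark \ref{rem:ftilde=f}). One then passes to the limit by dominated convergence with dominating function $C|v_n|$, which requires only the $L^1$/$L^p$ control and pointwise convergence of $v_n$ that you already have. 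Replacing your second-order expansion by this first-order exact identity closes the gap; the remaining parts of your argument (normalization, compactness, $z$ a nonzero constant, $q>p$) are correct and coincide with the paper's.
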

\begin{proof}
Suppose by contradiction that for every $n\in \mathbb{N}_q$ there exists $w_n \in \mathcal{N}_q$ such that
\begin{equation}\label{eq:contradiction_w_n}
\|w_n-1\|_{W^{1,p}(B)} \leq \frac{1}{n}
\quad\mbox{ and }\quad
\|w_n-1\|_{L^p(B)} \geq n \|\nabla w_n\|_{L^p(B)}.
\end{equation}
Letting
\[
\varepsilon_n:=\|w_n-1\|_{L^p(B)}
\quad\text{ and }\quad
v_n:=\frac{w_n-1}{\varepsilon_n},
\]
we have that $\|v_n\|_{L^p(B)} =1$ for every $n$ and that, by \eqref{eq:contradiction_w_n},  $\|\nabla v_n\|_{L^p(B)} \leq \frac{1}{n} \to 0$ as $n\to\infty$. Hence, there exist a subsequence $n_k$ and $\bar v\in W^{1,p}(B)$ such that,
\[
v_{n_k} \rightharpoonup \bar v \text{ weakly in } W^{1,p}(B)
\quad\text{ and }\quad
v_{n_k} \to \bar v \text{ in } L^p(B)
\]
as $k\to\infty$.
Moreover,
\begin{multline*}
\|\bar v\|_{W^{1,p}(B)}^p \leq
\liminf_{k\to\infty} \|v_{n_k}\|_{W^{1,p}(B)}^p \leq
\liminf_{k\to\infty} \left( \|v_{n_k}\|_{L^p(B)}^p+\frac{1}{n_k^p} \right) \\
=\|\bar v\|_{L^p(B)}^p \leq \|\bar v\|_{W^{1,p}(B)}^p,
\end{multline*}
providing that the convergence $v_{n_k} \to \bar v$ in $W^{1,p}(B)$ is actually strong and that $\bar v$ is a non-trivial constant function, more precisely $\bar v =\pm |B|^{-1/p}$.

We shall now exhibit a contradiction by exploiting the fact that $w_n \in \mathcal{N}_q$ for every $n$. Noticing that $f_q (w_n)=w_n^{q-1}$ for every $n$ (see Remark \ref{rem:ftilde=f}),  the Nehari condition for $w_n$ writes
\[
\int_B(|\nabla w_n|^p+w_n^p)dx=\int_B w_n^q\,dx.
\]
We rewrite the last equality in terms of $v_n$ and we divide it by $\varepsilon_n$ to obtain
\begin{equation}\label{eq:contradiction}
\varepsilon_n^{p-1} \int_B |\nabla v_n|^p dx=
\int_B w_n^p  \frac{(1+\varepsilon_n v_n)^{q-p}-1}{\varepsilon_n} \,dx,
\end{equation}
that readily leads to the contradiction $0=(q-p)\int_B \bar v \,dx$ by passing to the limit along the subsequence $n_k$ as $k\to\infty$. We remark that the converge of the right-hand side in \eqref{eq:contradiction} is justified by the Lebesgue dominated convergence theorem as
\begin{multline*}
\left| w_n^p(x)  \frac{(1+\varepsilon_n v_n(x))^{q-p}-1}{\varepsilon_n} \right|=
(q-p) \left| w_n^p(x)  (1+\xi_n(x) v_n(x))^{q-p-1} v_n(x) \right| \\
\leq (q-p) \|w_n\|_{L^\infty(B)}^{q-1} |v_n(x)|
\leq (q-p) s_0^{q-1} |v_n(x)|,
\end{multline*}
with $v_n\in L^1(B)$ and $\xi_n(x) \in (0,\varepsilon_n)$ given by the Lagrange theorem.
\end{proof}

\begin{remark}\label{rem:equivalence}
Given $\delta_q$ as in Lemma \ref{lem:PW}, for every $w\in \mathcal{N}_q$ with the property that $\|w-1\|_{W^{1,p}(B)}\leq\delta_q$ it holds
\[
\|\nabla w\|^p_{L^p(B)} \leq
\|w-1\|_{W^{1,p}(B)}^p \leq (C_{PW}+1) \|\nabla w\|^p_{L^p(B)} .
\]
\end{remark}

\begin{proof}[$\bullet$ Proof of Theorem \ref{thm:1minimizer}]
Let $w$ be as in the hypotheses and 
\[
g_t(s):=\frac{1}{t}\int_B(1+s(w-1))^tdx,
\] 
for $t>1$ and $s\in[0,1]$. Since by Remark \ref{rem:ftilde=f}, $f_q(w)=w^{q-1}$, we can write
\begin{equation}\label{eq:Iq-I1}
I_q(w)-I_q(1)=\frac{1}{p}\int_B|\nabla w|^pdx+(g_p(1)-g_p(0))-(g_q(1)-g_q(0)).
\end{equation}
Now, being $p>1$, the function $|x|^p$ is convex, and the inequality $(1+x)^p\ge 1+px$ holds for every $x\ge -1$. Thus, applying this inequality to $w-1$, we get
\[(1+(w-1))^p\ge 1+p(w-1),\]
which, integrated over $B$, becomes
\[
g_p(1)-g_p(0)\ge g'_p(0).
\]
On the other hand, since $q>2$, the function $g_q$ is of class $C^2$, and so we can write the following Taylor expansion
\[g_q(1)-g_q(0)=g'_q(0)+\frac{(q-1)}{2}\int_B(1+\xi(w-1))^{q-2}(w-1)^2dx\quad\mbox{for some }\xi\in(0,1).\]
We further observe that $g_p'(0)=g_q'(0)$. Therefore, combining together the previous consideration, we can estimate \eqref{eq:Iq-I1} as follows:
\begin{equation}\label{eq:estimateIq-I1}
\begin{aligned}
I_q(w)-I_q(1)&\ge \frac{1}{p}\int_B|\nabla w|^pdx-\frac{q-1}{2}\int_B(1+\xi(w-1))^{q-2}(w-1)^2dx\\
&\ge \frac{1}{p}\int_B|\nabla w|^pdx-\frac{q-1}{2}\max\{1,\|w\|_{L^\infty(B)}\}^{q-2}\int_B(w-1)^2dx\\
&\ge \frac{1}{p}\int_B|\nabla w|^pdx-\frac{q-1}{2}s_0^{q-2}\|w-1\|^2_{L^2(B)}.
\end{aligned}
\end{equation}
We distinguish now two cases, depending on whether the critical Sobolev exponent $p^*$ is greater or less than $2$.\smallskip

\noindent\underline{\it Case 1}: $p\ge \frac{2N}{N+2}$. In this case, $p^*\ge 2$. Hence, taking $\delta_q$ smaller if necessary and using Remark \ref{rem:equivalence}, we obtain by \eqref{eq:estimateIq-I1}
\begin{equation}\label{eq:est-diff}
I_q(w)-I_q(1)\ge C\|w-1\|^p_{W^{1,p}(B)}-C'\|w-1\|^2_{W^{1,p}(B)},
\end{equation}
where $C:=\frac{1}{p(C_{PW}+1)}>0$, $C':=\frac{q-1}{2}s_0^{q-2}C_S$, and $C_S$ arises from the Sobolev inequality for the embedding $W^{1,p}(B)\hookrightarrow L^2(B)$. Recalling that $p<2$ and that $\|w-1\|_{W^{1,p}(B)}\le \delta_q$, this estimate provides
\[
I_q(w)-I_q(1)\ge \frac{C}{2}\|w-1\|^p_{W^{1,p}(B)}\quad\mbox{for }\delta_q\ll 1.
\]

\noindent\underline{\it Case 2}: $p< \frac{2N}{N+2}$. In this case, $p^*< 2$. By the inequality in Remark \ref{rem:ftilde=f} and the triangle inequality, $\|w-1\|_{L^\infty(B)}\le s_0+1$. Therefore, we have
\[
\int_B (w-1)^2dx\le (s_0+1)^{2-p^*}\int_{B}(w-1)^{p^*}dx,
\]
and arguing as in the previous case, we get
\[
I_q(w)-I_q(1)\ge C\|w-1\|^p_{W^{1,p}(B)}-C''\|w-1\|^{p^*}_{W^{1,p}(B)},
\]
where $C'':=\frac{q-1}{2}s_0^{q-2}(s_0+1)^{2-p^*}C'_S$, and now $C'_S$ arises from the embedding $W^{1,p}(B)\hookrightarrow L^{p^*}(B)$. Again, for $\delta_q\ll 1$, this allows to conclude, being $p^*<p$.
\end{proof}

\begin{remark}\label{rem:strict}
Theorem \ref{thm:1minimizer} actually implies that the constant solution 1 is a {\it strict} local minimizer for $I_q$ on $\mathcal{N}_q$. Indeed, given $\delta_q$ as in Theorem \ref{thm:1minimizer}, if $w\in\mathcal N_q$ is such that $\|w-1\|_{W^{1,p}(B)}=\delta_q$, then
\begin{equation}\label{eq:strict}
I_q(w)\ge I_q(1)+M_q\delta_q^p.
\end{equation}
\end{remark}

\begin{remark}\label{rem:u_q_far_1}
Another consequence of Theorem \ref{thm:1minimizer} is that $u_q$ cannot be too close to $1$ for $q$ sufficiently large.  More precisely, $\|u_q-1\|_{W^{1,p}(B)}\geq\delta_q$.
Indeed, suppose by contradiction that $\|u_q-1\|_{W^{1,p}(B)}< \delta_q$. Since, by Theorem \ref{thm:asymptotic_q}, $u_q\not\equiv 1$ for $q$ sufficiently large, Theorem \ref{thm:1minimizer} would provide $I_q(u_q)\ge I_q(1)$, which contradicts \eqref{eq:Iq<I1}.
\end{remark}

\section{Existence of a higher energy nonconstant solution}\label{sec:terza_sol}

In order to prove the existence, for $q$ sufficiently large, of the second solution $v_q$, we shall apply a variational method over the Nehari set $\mathcal{N}_q$.  A mountain pass type theorem over $\mathcal{N}_q$ applies due to the fact that, as shown in the previous section, both the nonconstant solution $u_q$ and the constant solution $1$ are local minimizers of the energy $I_q$ over $\mathcal{N}_q$.  The main difficulty in what follows is that the Nehari set is not a manifold, which prevents us from directly applying the mountain pass theorem over manifolds. We shall instead construct a candidate critical level by means of two-dimensional paths and show that it is indeed critical using the deformation previously introduced (see Lemma \ref{deformation1}).
As the deformation takes place inside the cone $\mathcal{C}$,  we need to define a variational structure inside $\mathcal{C}$ itself; this is done keeping in mind the structure of the Nehari set in $\mathcal{C}$, see Lemma \ref{gH}.
Let us start with some preliminary estimates.

\begin{lemma}\label{lem:qR}
Let
\[
\gamma_0(t,s)=t(s u_q+1-s)
\]
There exists $\bar{q}>2$, such that, for every $q\geq \bar{q}$, there exist $0<R_1 \ll 1$, $R_2\gg1$ such that
\begin{itemize}
\item[(i)] $0<I_q(\gamma_0(R_1,s))\leq \frac{c_q}{2}$;
\item[(ii)] $I_q'(\gamma_0(R_1,s)) [\gamma_0(R_1,s)]>0$;
\item[(iii)] $I_q(\gamma_0(R_2,s)) <0 $;
\item[(iv)] $I_q'(\gamma_0(R_2,s)) [\gamma_0(R_2,s)]<0$
\end{itemize}
for every $s\in [0,1]$.
\end{lemma}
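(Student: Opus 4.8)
The plan is to consider the two-parameter family $\gamma_0(t,s) = t(su_q + 1 - s)$ and to note that, for each fixed $s \in [0,1]$, the curve $t \mapsto \gamma_0(t,s)$ is simply the ray $t \mapsto t\,w_s$ through the point $w_s := su_q + 1 - s \in \mathcal C \setminus\{0\}$. (Here $w_s$ is genuinely in $\mathcal C\setminus\{0\}$ since $\mathcal C$ is a convex cone containing both $u_q$ and the constant $1$, cf. properties (i)--(iv) of $\mathcal C$, and $w_s \geq \min\{1, \min u_q\} > 0$.) Thus all four assertions are statements about the behaviour of $I_q$ along rays, and the natural tool is Lemma \ref{gH}: for each $w_s$ there is a unique $h_q(w_s) > 0$ with $h_q(w_s) w_s \in \mathcal N_q$, one has $I_q(tw_s) > 0$ for $t \in (0, h_q(w_s)]$ and $I_q'(tw_s)[w_s] > 0$ iff $t \in (0, h_q(w_s))$. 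Multiplying the latter by $t>0$, $I_q'(tw_s)[tw_s] > 0$ for $t \in (0,h_q(w_s))$ and $< 0$ for $t > h_q(w_s)$; also $I_q(tw_s) \to -\infty$ as $t \to \infty$ by the argument in Lemma \ref{lem:geometry}(ii) applied uniformly in $s$ (the relevant coercivity-destroying term $\tfrac d2 \tfrac{t^\ell}{\ell}\|w_s\|_{L^\ell}^\ell$ has a coefficient bounded below uniformly in $s$ since $\|w_s\|_{L^\ell} \geq \|1\|_{L^\ell} = |B|^{1/\ell} > 0$).

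The next step is to make the choices of $R_1$ and $R_2$ uniform in $s \in [0,1]$. First I would establish the two compactness facts: the map $s \mapsto w_s$ is continuous from $[0,1]$ into $\mathcal C\setminus\{0\}$ (it is affine), and $\|w_s\|_{W^{1,p}(B)}$ is bounded above and below by positive constants uniformly in $s$ (lower bound because $\|w_s\|_{L^p} \geq c > 0$, upper bound because $u_q$ and $1$ are fixed). By the continuity of $h_q$ along $W^{1,p}$-convergent sequences (Lemma \ref{gH}) and compactness of $[0,1]$, $h_q(w_s)\|w_s\|$ — equivalently the "Nehari radius" in $t$ — is bounded between two positive constants $0 < \underline h \leq h_q(w_s) \leq \overline h < \infty$ for all $s$. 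For (i)--(ii): since $I_q(tw_s) > 0$ and $I_q'(tw_s)[tw_s] > 0$ for $0 < t < h_q(w_s)$, and since on the compact $s$-range $\max_s I_q(tw_s) \to 0$ as $t \to 0^+$ (uniform continuity of $(t,s)\mapsto I_q(tw_s)$ on, say, $[0,1]\times[0,1]$, using $I_q \in C^1$ and the uniform bound on $\|w_s\|$), I can pick $R_1$ so small that $0 < I_q(\gamma_0(R_1,s)) \leq c_q/2$ and $R_1 < \underline h \leq h_q(w_s)$ for every $s$, giving (ii). For (iii)--(iv): pick $R_2 > \overline h$ large enough that $I_q(tw_s) < 0$ for all $t \geq R_2$ and all $s$ — possible because $I_q(tw_s) \leq |B|\big(\tfrac{t^p}{p}\|w_s\|_\infty^p - \tfrac d2 \tfrac{t^\ell}{\ell}\|w_s\|_\ell^\ell + \text{l.o.t.}\big)$ with coefficients controlled uniformly in $s$, so the right side $\to -\infty$ uniformly; since $R_2 > \overline h \geq h_q(w_s)$, we also get $I_q'(R_2 w_s)[R_2 w_s] < 0$, which is (iv), and $I_q(\gamma_0(R_2,s)) < 0$, which is (iii).

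The main obstacle I anticipate is precisely the \emph{uniformity in $s$}: Lemma \ref{gH} gives continuity of $h_q$ and positivity of the energy along each individual ray, but one must promote these to uniform bounds $0 < \underline h \leq h_q(w_s) \leq \overline h$ and uniform smallness/largeness of $I_q$, and this rests on the two pillars that $\{w_s : s\in[0,1]\}$ is a $W^{1,p}$-compact subset of $\mathcal C$ bounded away from $0$. Once those are in hand, everything else is a routine continuity-and-compactness argument, with no real analytic subtlety beyond the (already available) structure of $I_q$ along rays. One minor point to be careful about is that $\gamma_0(R_1, \cdot)$ and $\gamma_0(R_2,\cdot)$ should lie strictly \emph{inside} and \emph{outside} the Nehari set respectively in the sense of (ii) and (iv); this is automatic from $R_1 < \underline h$ and $R_2 > \overline h$ together with the sign information on $I_q'(tw_s)[tw_s]$ recorded above.
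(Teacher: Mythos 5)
Your proposal is correct in substance but follows a genuinely different route from the paper's. You treat each fiber $t\mapsto tw_s$, with $w_s:=su_q+1-s$, through the Nehari fibering structure of Lemma \ref{gH} (sign of $I_q(tw_s)$ and of $I_q'(tw_s)[tw_s]$ below and above the unique Nehari time $h_q(w_s)$), and you make the choices of $R_1<\underline h:=\min_s h_q(w_s)$ and $R_2>\overline h:=\max_s h_q(w_s)$ uniform in $s$ via compactness of the segment $\{w_s\}_{s\in[0,1]}$ and the continuity of $h_q$ along $W^{1,p}$-convergent sequences stated in Lemma \ref{gH}. The paper instead argues by explicit estimates that exploit the regime $q\to\infty$: the convergence $u_q\to G$ of Theorem \ref{thm:asymptotic_q} gives $u_q(1)\to1$ and $u_q(0)\to G(0)>0$, so that for $R_1$ small $\gamma_0(R_1,\cdot)$ lies in the untruncated region where $f_q(s)=s^{q-1}$ (whence (i)--(ii) via $\|\cdot\|_{L^q(B)}^q\le|B|\|\cdot\|_{L^\infty(B)}^q$ and $R_1^q\ll R_1^p$), while for $R_2$ large $\gamma_0(R_2,\cdot)$ lies entirely above $s_0$, where $f_q$ is an explicit $\ell$-power and (iii)--(iv) follow from $\ell>p$. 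Your argument buys generality --- it works for every fixed $q$ for which Lemma \ref{gH} applies, with no need of $\bar q$ or of the asymptotics --- at the price of radii depending on $q$ through $h_q$; this is harmless, since the sequel fixes $q$ anyway.

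One small correction: the inequality $\|w_s\|_{L^\ell(B)}\ge\|1\|_{L^\ell(B)}$ that you invoke for the uniform coercivity in (iii) is false, because $u_q(0)<1$ (see \eqref{eq:u(0)}), so $w_s<1$ near the origin whenever $s>0$. The uniform lower bound you need is nevertheless available from the pointwise estimate $w_s\ge s\,u_q(0)+(1-s)\ge\min\{1,u_q(0)\}=u_q(0)>0$, which yields $\|w_s\|_{L^\ell(B)}^\ell\ge u_q(0)^\ell|B|$; with this replacement the coercivity argument for (iii), and the strict sign in (iv) (which you correctly derive from the uniqueness of $h_q(w_s)$ as the zero of $t\mapsto I_q'(tw_s)[tw_s]$), go through.
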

\begin{proof}
Notice first that, thanks to Lemma \ref{weakconv}, for every $s\in [0,1]$ and $t>0$ it holds
\begin{equation}\label{eq:gamma_0_infty_norm}
\|\gamma_0(t,s)\|_{L^\infty(B)} \leq
t \| u_q \|_{L^\infty(B)} = t u_q(1) \to t \quad \text{ as }q\to\infty.
\end{equation}
Hence it is possible to choose $R_1$ so small that $\|\gamma_0(R_1,s)\|_{L^\infty(B)} \le\tau < s_0$, where $\tau$ is given in Lemma \ref{lem:geometry}. This implies both $I_q(\gamma_0(R_1,s))>0$ and
\begin{equation}\label{eq:gamma_0_tilde_f}
{f}_q(\gamma_0(R_1,s))=\gamma_0(R_1,s)^{q-1}
\end{equation}
for every $s\in [0,1]$ and for every sufficiently large $q$. With similar estimates we obtain, for every $t>0$ and $s\in [0,1]$,
\[
I_q(\gamma_0(t,s))\leq \frac{\|\gamma_0(t,s)\|^p_{W^{1,p}(B)}}{p}
\to \frac{t^p}{p} \left( \|\nabla G\|^p_{L^p(B)} +|B|\right)
\quad \text{ as }q\to\infty,
\]
where we used the convergence proved in Theorem \ref{thm:asymptotic_q}. As $\lim_{q\to\infty} c_q=c_\infty>0$ (see \eqref{eq:c_infty_limit_c_q}), it is possible to choose $t=R_1$ so small that (i) holds for every sufficiently large $q$.  To prove (ii) we make use of \eqref{eq:gamma_0_tilde_f}:
\begin{multline*}
I_q'(\gamma_0(R_1,s)) [\gamma_0(R_1,s)] =
\|\gamma_0(R_1,s)\|^p_{W^{1,p}(B)}-\|\gamma_0(R_1,s)\|_{L^q(B)}^q
\\
\geq \|\gamma_0(R_1,s)\|^p_{L^p(B)}-|B| \|\gamma_0(R_1,s)\|^q_{L^\infty(B)}
\geq |B| \left( R_1^p u_q(0)^p - R_1^q u_q(1)^q \right) \\
\to |B| R_1^p G(0)^p \quad \text{ as }q\to\infty,
\end{multline*}
for every $s\in [0,1]$, where we used again Theorem \ref{thm:asymptotic_q}.  Therefore, taking $R_1$ smaller, if necessary, also (ii) holds for every sufficiently large $q$.

Let us now consider $R_2\gg1$. For every $s\in [0,1]$ and $t>0$ we have
\[
\inf_B \gamma_0(t,s) \geq t u_q(0) \to t G(0)
\quad \text{ as }q\to\infty.
\]
Hence, being $G(0)>0$ by Weak and Strong Maximum Principles \cite[Theorem 1.1]{Damascelli} and \cite[Theorem 5]{Vazquez}, it is possible to choose $R_2$ large enough that $\inf_B  \gamma_0(R_2,s) > s_0 $ for every $s\in [0,1]$ and $t>0$,  implying
\[
{f}_q(\gamma_0(R_2,s)) = -\frac{q-\ell}{\ell-1}s_0^{q-1} + \frac{q-1}{\ell-1} s_0^{q-\ell} (\gamma_0(R_2,s))^{\ell-1}
\]
for every $s\in [0,1]$ and sufficiently large $q$. This allows to prove that, for every $s\in [0,1]$,
\[
I_q(\gamma_0(R_2,s)) \leq -C_1 R_2^\ell+C_1 R_2^p+C_3 R_2+C_4,
\]
for some constants $C_1>0$ and $C_2,\,C_3,\,C_4\in\mathbb R$ independent of $R_2$. As $\ell>p$, (iii) holds for sufficiently large $R_2$, independent of $q$. The proof of (iv) is very similar to that of (iii).
\end{proof}

In what follows, we fix $q\geq \bar{q}$, with $\bar{q}$ given in Lemma \ref{lem:qR}. Let $R_1,R_2$ as in Lemma \ref{lem:qR}, we define $Q:=[R_1,R_2]\times [0,1] \subset \mathbb{R}^2$,
\[
\gamma_0:Q\to \mathcal{C},
\qquad
\gamma_0(t,s):=t(su_q+1-s).
\]
Notice that $\gamma_0(t,s) \in \mathcal{C}$ for every $(t,s)\in Q$ thanks to the convexity of $\mathcal{C}$. In particular, $\gamma_0$ belongs to the set
\[
\Upsilon_q:=\{\gamma \in C(Q;\mathcal C)\ :\ \gamma=\gamma_0 \text{ on } \partial Q \}.
\]
We define our candidate critical level as
\begin{equation}\label{eq:d_q}
d_q:=\inf_{\gamma\in\Upsilon_q} \max_{(t,s)\in Q} I_q(\gamma(t,s)).
\end{equation}

\begin{remark}\label{rem:boundary_Q}
The estimates proved in Lemma \ref{lem:qR} allow to conclude that,  for every $\gamma\in \Upsilon_q$,
\[
\max_{(t,s)\in \partial Q} I_q(\gamma(t,s)) = I_q(1).
\]
Indeed, notice first that,  being $\gamma=\gamma_0$ on $\partial Q$,  it is sufficient to estimate $I_q(\gamma_0(t,s))$,  $(t,s)\in \partial Q$. Lemma \ref{lem:qR} provides
\[
I_q(\gamma_0(R_2,s))<0<I_q(\gamma_0(R_1,s)) \leq \frac{c_q}{2} =\frac{I_q(u_q)}{2} \leq \frac{I_q(1)}{2}.
\]
Concerning the remaining part of $\partial Q$, we have for every $t>0$
\[
I_q(\gamma_0(t,0)) =I_q(t) \leq I_q(1)
\]
\[
I_q(\gamma_0(t,1)) =I_q(tu_q) \leq I_q(1)
\]
by the fact that $1,u_q \in \mathcal{N}_q$ and by Lemma \ref{gH}.
\end{remark}

\begin{lemma}\label{lem:d_q_estimates}
Given $M_q$ and $\delta_q$ as in Theorem \ref{thm:1minimizer}, it holds
\[
d_q \geq I_q(1) + M_q\delta_q^p.
\]
\end{lemma}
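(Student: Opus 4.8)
The goal is to show that every two-dimensional path $\gamma\in\Upsilon_q$ must pass through a point where $I_q$ exceeds $I_q(1)+M_q\delta_q^p$; then taking the infimum over $\Upsilon_q$ gives the claim. The plan is to use a topological (degree or connectedness) argument on the square $Q$ to force $\gamma$ to cross the "sphere" $\{w:\|w-1\|_{W^{1,p}(B)}=\delta_q\}$ at a point that, after projecting onto $\mathcal N_q$ via the radial retraction $u\mapsto h_q(u)u$ of Lemma \ref{gH}, still lies at distance $\delta_q$ from $1$ — there Theorem \ref{thm:1minimizer} (in the sharp form of Remark \ref{rem:strict}) gives the lower bound. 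First I would fix $\gamma\in\Upsilon_q$ and consider the two continuous real-valued functions on $Q$
\[
a(t,s):=I_q'(\gamma(t,s))[\gamma(t,s)],\qquad b(t,s):=\|h_q(\gamma(t,s))\gamma(t,s)-1\|_{W^{1,p}(B)},
\]
where $h_q$ is the retraction onto $\mathcal N_q$; note $\gamma(t,s)\neq 0$ on $Q$ since $\gamma=\gamma_0$ on $\partial Q$ and $\gamma_0>0$, and in fact one checks $\gamma(t,s)\neq 0$ throughout by the a priori bounds (or one restricts attention to the relevant region). By Lemma \ref{lem:qR}(ii),(iv), $a>0$ on the edge $\{R_1\}\times[0,1]$ and $a<0$ on $\{R_2\}\times[0,1]$, so the zero set $\{a=0\}$ — which is exactly where $\gamma(t,s)\in\mathcal N_q$ — separates the two vertical edges of $Q$: it contains a connected component whose closure joins the horizontal edge $[R_1,R_2]\times\{0\}$ to $[R_1,R_2]\times\{1\}$.

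Along that connecting continuum $\Sigma\subset\{a=0\}$, $\gamma$ already takes values in $\mathcal N_q$, so $h_q(\gamma)\equiv 1$ and $b$ restricted to $\Sigma$ equals $\|\gamma-1\|_{W^{1,p}(B)}$. At the endpoint of $\Sigma$ on the edge $s=0$ we have $\gamma=\gamma_0(t,0)=t\cdot 1$ for some $t\in[R_1,R_2]$ with $t\cdot 1\in\mathcal N_q$; since $1\in\mathcal N_q$ and $h_q(1)=1$ is unique, this forces $t=1$ and hence $b=0$ there. At the endpoint on the edge $s=1$ we have $\gamma=\gamma_0(t,1)=t\,u_q\in\mathcal N_q$, forcing $t=h_q(u_q)=1$, so $\gamma=u_q$ and $b=\|u_q-1\|_{W^{1,p}(B)}\geq\delta_q$ by Remark \ref{rem:u_q_far_1} (valid for $q$ large). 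Thus $b$ is continuous on the connected set $\Sigma$, equals $0$ at one end and is $\geq\delta_q$ at the other, so by the intermediate value theorem there is a point $(t^*,s^*)\in\Sigma$ with $b(t^*,s^*)=\delta_q$, i.e. $w:=\gamma(t^*,s^*)\in\mathcal N_q$ and $\|w-1\|_{W^{1,p}(B)}=\delta_q$. Remark \ref{rem:strict} then yields $I_q(w)\geq I_q(1)+M_q\delta_q^p$, so $\max_Q I_q(\gamma)\geq I_q(1)+M_q\delta_q^p$, and taking the infimum over $\gamma\in\Upsilon_q$ completes the proof.

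The main obstacle is the topological step: making rigorous that the zero set $\{a=0\}$ contains a continuum joining the two horizontal edges of $Q$. The clean way is a connectedness argument — if no such continuum existed, $Q$ would split into a relatively open neighborhood of $[R_1,R_2]\times\{0\}$ on which one could deform to the region $\{a<0\}$ (or $\{a>0\}$) and separate it from $[R_1,R_2]\times\{1\}$, contradicting connectedness of $Q$; alternatively one invokes a standard mountain-pass-in-$Q$ lemma or a degree argument for the map $(t,s)\mapsto(a(t,s),s)$ using the sign conditions on the vertical edges. One must also take minor care that $h_q\circ\gamma$ is well-defined and continuous, which is exactly the content of Lemma \ref{gH} (continuity of $h_q$ and $\gamma(t,s)\in\mathcal C\setminus\{0\}$); the nonvanishing of $\gamma$ on all of $Q$ may require shrinking $R_1$ using Lemma \ref{palla_nella_Nehari}, or simply restricting the continuum argument to the subregion where $\|\gamma\|_{L^\infty}$ is bounded below, which is automatic near $\{a=0\}$ since $\mathcal N_q$ is bounded away from $0$.
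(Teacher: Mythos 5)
Your proposal is correct in substance and follows the same strategy as the paper: you reduce the bound to producing, on any $\gamma\in\Upsilon_q$, a point $w=\gamma(\bar t,\bar s)\in\mathcal N_q$ with $\|w-1\|_{W^{1,p}(B)}=\delta_q$, using exactly the paper's two auxiliary functions (your $a$ is the paper's $\mathcal G$, your $b-\delta_q$ is its $\mathcal F$), the same boundary information from Lemma \ref{lem:qR}(ii),(iv), Lemma \ref{gH} and Remark \ref{rem:u_q_far_1}, and the same conclusion via Remark \ref{rem:strict}. The only real difference is the topological engine: you extract a continuum in $\{a=0\}$ joining the horizontal edges and apply the intermediate value theorem along it, and you rightly flag that separation-implies-continuum step as the part still needing a rigorous proof. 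The paper sidesteps this entirely by applying Miranda's theorem to the pair $(\mathcal F,\mathcal G)$ on $Q$: since $h_q(tu)\,tu=h_q(u)\,u$ for all $t>0$ (uniqueness in Lemma \ref{gH}), one has $\mathcal F(t,0)\equiv-\delta_q<0$ and $\mathcal F(t,1)\equiv\|u_q-1\|_{W^{1,p}(B)}-\delta_q>0$ for \emph{every} $t\in[R_1,R_2]$, not just at the endpoints of your continuum, so the required sign conditions hold on all four edges and Miranda gives a common zero directly. This is exactly the clean packaging of your "main obstacle", and I'd recommend it over the unicoherence/continuum route. One shared loose end (present in the paper as well) is the well-definedness and continuity of $h_q(\gamma(t,s))$, i.e.\ that $\gamma$ does not vanish in the interior of $Q$; your suggested fixes are reasonable, and in any case the relevant zero is found where $\|h_q(\gamma)\gamma-1\|=\delta_q<1$, so the function is nontrivial there.
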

\begin{proof}
Given $\gamma\in\Upsilon_q$, we claim that there exists $(\bar{t},\bar{s})\in (R_1,R_2)\times(0,1)$ such that
\begin{equation}\label{eq:claim}
\gamma(\bar{t},\bar{s})\in\mathcal{N}_q
\quad\text{and}\quad
\|\gamma(\bar{t},\bar{s})-1\|_{W^{1,p}(B)}=\delta_q
\end{equation}
with $\delta_q$ as in Theorem \ref{thm:1minimizer}. Once the claim is proved,  Remark \ref{rem:strict} implies
\[
\max_{(t,s)\in Q} I_q(\gamma(t,s))
\geq I_q(\gamma(\bar{t},\bar{s}))
\geq I_q(1)+M_q \delta_q^p
\]
for every $\gamma\in\Upsilon_q$ and hence the statement.

In order to prove \eqref{eq:claim}, let $\mathcal{F},\mathcal{G}:\to\mathbb{R}$ be defined as
\[
\mathcal{F}(t,s)=\|h_q(\gamma(t,s))\gamma(t,s)-1\|_{W^{1,p}(B)} -\delta_q,
\]
with $h_q$ as in Lemma \ref{gH}, and
\[
\mathcal{G}(t,s)=I'_q(\gamma(t,s))[\gamma(t,s)].
\]
Notice that $\mathcal{F},\mathcal{G}$ are continuous in $Q$. Moreover, by Lemma \ref{lem:qR},
\[
\mathcal{G}(R_2,s)<0<\mathcal{G}(R_1,s)
\]
for every $s\in [0,1]$. By Lemma \ref{gH} and the fact that $1\in \mathcal{N}_q$, it holds
\[
\mathcal{F}(t,0)=\|h_q(t)t-1\|_{W^{1,p}(B)} -\delta_q = -\delta_q <0
\]
for every $t\in [R_1,R_2]$,  whereas, by Remark \ref{rem:u_q_far_1},
\[
\mathcal{F}(t,1)=\|h_q(tu_q)tu_q-1\|_{W^{1,p}(B)} -\delta_q
=\|u_q-1\|_{W^{1,p}(B)} -\delta_q >0
\]
for every $t\in [R_1,R_2]$. Hence, by Miranda's Theorem \cite{Miranda} there exists $(\bar{t},\bar{s})\in (R_1,R_2)\times(0,1)$ such that
\[
\mathcal{F}(\bar{t},\bar{s})=\mathcal{G}(\bar{t},\bar{s})=0,
\]
thus implying \eqref{eq:claim}.
\end{proof}

\begin{proof}[$\bullet$ Proof of Theorem \ref{thm:terza_sol}]
By Lemma \ref{lem:d_q_estimates} and the fact that $\Upsilon_q$ is not empty, for $q\ge \bar{q}$ we have that $d_q\in (I_q(1), \infty)$.
We need to show that, for sufficiently large $q$, $d_q$ is a critical level for $I_q$ in $\mathcal{C}$. To this aim we proceed by contradiction, thus assuming that there are no critical points $u\in\mathcal C$ of $I_q$ at level $d_q$.
Given $\bar{\varepsilon}$ as in Lemma \ref{conseqPS1},  let
\[
\varepsilon < \min \left\{ \bar\varepsilon, \frac{M_q\delta_q^p}{3} \right\},
\]
with $M_q,\delta_q$ as in Theorem \ref{thm:1minimizer}. By Lemma \ref{deformation1} there exists $\eta:\mathcal{C}\to\mathcal{C}$ such that $I_q(\eta(u))\le d_q-\varepsilon$ for all $u\in\mathcal C$ such that $|I_q(u)-d_q|<\varepsilon$ and $\eta(u)=u$ for all $u\in\mathcal C$ such that $|I_q(u)-d_q|>2\varepsilon$.

Let $\gamma\in \Upsilon_q$ be any path such that
\[
\max_{(t,s)\in Q} I_q(\gamma(t,s)) < d_q+\varepsilon.
\]
Notice that, by Remark \ref{rem:boundary_Q}, Lemma \ref{lem:d_q_estimates} and the choice of $\varepsilon$, we have
\[
\max_{(t,s)\in \partial Q} I_q(\gamma(t,s))=
\max_{(t,s)\in \partial Q} I_q(\gamma_0(t,s)) =I_q(1)
\leq d_q-M_q\delta_q^p \leq d_q-3\varepsilon.
\]
Therefore, defining $\bar\gamma(t,s):=\eta(\gamma(t,s))$ for $(t,s)\in Q$, we have that $\bar{\gamma}=\gamma$ on $\partial Q$ and thus $\bar{\gamma} \in \Upsilon_q$. Hence, by Lemma \ref{deformation1}-(iii),
\[
\max_{(t,s)\in \partial Q} I_q(\bar{\gamma}(t,s)) \leq d_q-\varepsilon
\]
thus contradicting the definition of $d_q$. This proves the existence of a critical point $v_q\in \mathcal C$ at level $d_q$. Since $I_q(v_q)>I_q(1)>I_q(u_q)$, the functions $v_q$, 1, $u_q$ are three distinct radially nondecreasing solutions of \eqref{eq:Pq}.
\end{proof}

\section*{Acknowledgments}
The first two authors were partially supported by the INdAM - GNAMPA Project 2020 ``Problemi ai limiti per l'equazione della curvatura media prescritta''. The last author was partially supported by the project Vain-Hopes within
the program VALERE - Universit\`a degli Studi della Campania ``Luigi Vanvitelli'', by the Portuguese
government through FCT/Portugal under the project PTDC/MAT-PUR/1788/2020, and by the INdAM - GNAMPA group.

\noindent

\bibliographystyle{abbrv}
\bibliography{biblio}

\end{document}